\newtheorem{theorem}{Theorem}[section]
\newtheorem{proposition}[theorem]{Proposition}
\newtheorem{lemma}[theorem]{Lemma}
\theoremstyle{definition}
\newtheorem{definition}[theorem]{Definition}
\begin{document}

\title[The $p$-elastic flow for planar closed curves]{The $p$-elastic flow for planar closed curves \\ with constant parametrization}

\author[S.~Okabe]{Shinya Okabe}
\address[S.~Okabe]{Mathematical Institute, Tohoku University, Aoba, Sendai 980-8578, Japan}
\email{shinya.okabe@tohoku.ac.jp}
\author[G.~Wheeler]{Glen Wheeler}
\address[G.~Wheeler]{Institute for Mathematics and its Applications, University of Wollongong, Northfields Avenue, Wollongong, NSW, 2522, Australia}
\email{glenw@uow.edu.au}

\keywords{}
\subjclass[2010]{}

\date{\today}

\begin{abstract}
In this paper, we consider the $L^2$-gradient flow for the modified $p$-elastic energy defined on planar closed curves. 
We formulate a notion of weak solution for the flow and prove the existence of global-in-time weak solutions with $p \ge 2$ for initial curves in the energy space via minimizing movements. 
Moreover, we prove the existence of unique global-in-time solutions to the flow with $p=2$ and obtain their subconvergence to an elastica as $t \to \infty$. 
\end{abstract}

\maketitle


\section{Introduction} \label{section:1}
This paper is concerned with the modified $p$-elastic flow defined on planar closed curves. 
The modified $p$-elastic energy for planar closed curve $\gamma : \mathcal{S}^1 := \mathbb{R} / \mathbb{Z} \to \mathbb{R}^2$ is defined by 
\begin{equation*}
\mathcal{E}_p(\gamma) := E_p(\gamma) + \lambda \mathcal{L}(\gamma) 
\end{equation*}
with 
\begin{align*}
E_p(\gamma) := \dfrac{1}{p} \int_\gamma |\kappa(s)|^p \, ds, \quad 
\mathcal{L}(\gamma) := \int_\gamma \, ds,  
\end{align*}
where $p>1$ and $\lambda >0$, and $\kappa$ and $s$ respectively denote the curvature and the arc length parameter of $\gamma$.
We assume here that $\gamma$ is in the \emph{energy space} $W^{2,p}(\mathcal{S}^1; \mathbb{R}^2)$.

The functional $E_2$ is well-known as the bending energy or one-dimensional Willmore functional, and the variational problem on $E_2$ has attracted great interest. 
The $L^2$-gradient flow for $\mathcal{E}_2$ and $E_2$ are called the modified elastic flow and the elastic flow respectively.
Both flows have been extensively studied in the mathematical literature 
(see for instance \cite{DLLPS,DLP_2014,DLP_2017,DP_2014,DPS,DKS,K,LS_1984,LS_1985,L,LLS,MM,MPP,MP,NO_2014,NO_2017,Oe_2014,Oe_2011,O_2007,O_2008,P,S,Wen_1993,Wen_1995,Wheeler_2015}, and references therein). 
It is significant to extend studies on the bending energy to those on the $p$-elastic energy with $p \neq 2$.
Indeed, recently the $p$-elastic energy has attracted interest (e.g., \cite{AM,BVH,DFLM,FKN,NP,OPW,SW,W}). 
The purpose of this paper is to construct and study the $L^2$-gradient flow for the $p$-elastic energy for initial curves in the energy space.

Formally the Cauchy problem for the $L^2(ds)$-gradient flow for the $p$-elastic energy defined on closed curves is given by 
\begin{align}
\label{eq:P} \tag{P}
\begin{cases}
\partial_t \gamma = - \nabla \mathcal{E}_p(\gamma) \,\,\, &\text{in} \,\,\, \mathcal{S}^1 \times (0, T), \\
\gamma(x,0)= \gamma_0(x) \,\,\, &\text{in} \,\,\, \mathcal{S}^1.   
\end{cases}
\end{align}
Here $\nabla \mathcal{E}_p(\cdot)$ denotes the Euler-Lagrange operator of $\mathcal{E}_p$ in $L^2(ds)$, i.e., 
\begin{equation*}
\dfrac{d}{d\varepsilon} \mathcal{E}_p(\gamma+\varepsilon \varphi) \Bigm|_{\varepsilon=0}=\int_\gamma \nabla \mathcal{E}_p(\gamma) \cdot \varphi \, ds. 
\end{equation*}
The equation in \eqref{eq:P} can be classified as a fourth-order quasilinear parabolic equation.
For the case $p=2$, the analytic semigroup approach applies to prove the existence of local-in-time solutions of \eqref{eq:P} for \emph{sufficiently smooth} initial data.
We emphasize that the approach can not work for an initial curve in the energy space. 
For the case $p \neq 2$, one may observe that the equation in \eqref{eq:P} is strongly degenerate.
The coefficient of the highest order term is proportional to the $(p-2)$-th power of the curvature scalar.
This degeneracy has a profound impact on the solution space.
Indeed, Watanabe~\cite{W} gave several examples of critical points of $E_p$ with a \emph{flat core}, i.e., the critical point has an open region where the curvature is identically equal to $0$.  The derivative of curvature along a flat core solution may be discontinuous \cite[Ex 1]{W}, so they are \emph{not} of class $W^{4,1}(ds)$.

This means that critical points are not smooth, reminiscent of what occurs in the analysis of the $p$-Laplacian.
Regularity issues for the $p$-elastic flow are quite delicate, and it is significant to give a weak formulation for solutions to the problem \eqref{eq:P}.
Our goal in doing this is to prove the existence of solutions starting from initial data in the energy space.

We define weak solutions of the problem~\eqref{eq:P} with a specific parametrization: the so-called constant parametrization. 
Let $\gamma_0 \in W^{2,p}(\mathcal{S}^1; \mathbb{R}^2)$ be an initial curve and assume that 
\begin{equation}
\label{eq:1.1}
|\partial_x \gamma_0(x)| \equiv \mathcal{L}(\gamma_0). 
\end{equation}
For the parameter $x$ defined by \eqref{eq:1.1}, we set  
$$
\mathcal{AC}_{\gamma_0}:= \bigl\{ \gamma \in W^{2,p}(\mathcal{S}^1;\mathbb{R}^2) \mid |\partial_x \gamma(x)| \equiv \mathcal{L}(\gamma) \bigr\}. 
$$
We formulate the definition of weak solutions to the problem \eqref{eq:P} as follows: 
\begin{definition} \label{theorem:1.1}
We say that $\gamma$ is a weak solution to the problem \eqref{eq:P} if the following hold$\colon$ 
\begin{enumerate}
\item[{\rm (i)}] $\gamma \in L^{\infty}(0,T; W^{2,p}(\mathcal{S}^1;\mathbb{R}^2)) \cap H^1(0,T; L^2(\mathcal{S}^1;\mathbb{R}^2));$ 
\item[{\rm (ii)}] For any $\eta \in L^\infty(0,T; W^{2,p}(\mathcal{S}^1; \mathbb{R}^2))$, it holds that 
\begin{equation}
\label{eq:1.2}
\begin{aligned}
\int^T_0 \!\!\!\! \int^1_0 \Bigl[ & \dfrac{|\partial^2_x \gamma|^{p-2} \partial^2_x \gamma}{\mathcal{L}(\gamma)^{2p-1}} \cdot \partial^2_x \eta 
  - \dfrac{2p-1}{p} \dfrac{|\partial^2_x \gamma|^{p} \partial_x \gamma}{\mathcal{L}(\gamma)^{2p+1}} \cdot \partial_x \eta \\
& + \dfrac{\lambda}{\mathcal{L}(\gamma)} \partial_x \gamma \cdot \partial_x \eta 
  + \mathcal{L}(\gamma) \partial_t \gamma \cdot \eta 
  + \mathcal{L}(\gamma) \partial_t \gamma \cdot \Phi_1(\gamma,\eta) \partial_x \gamma \Bigr] \, dxdt=0,   
\end{aligned}
\end{equation}
where 
\begin{equation*}
\Phi_1(\gamma,\eta):= 
\dfrac{1}{\mathcal{L}(\gamma)^2} \Bigl(x \int^1_0 \gamma_x \cdot \eta_x \, d\tilde{x} - \int^x_0 \gamma_x \cdot \eta_x \, d\tilde{x}\Bigr); 
\end{equation*}
\item[{\rm (iii)}] $\gamma(\cdot, t) \in \mathcal{AC}_{\gamma_0}$ for a.e. $t \in (0,T);$ 
\item[{\rm (iv)}] For a.e. $t \in (0, T)$, 
\begin{equation}
\label{eq:1.3}
\mathcal{E}_p(\gamma(\cdot,t)) \le \mathcal{E}_p(\gamma_0(\cdot)); 
\end{equation}
\item[{\rm (v)}] The following energy inequality holds.  
\begin{equation}
\label{eq:1.4}
\mathcal{E}_p(\gamma(\cdot,T)) - \mathcal{E}_p(\gamma_0(\cdot)) \le - \dfrac{1}{2} \int^T_0 \!\!\! \int^1_0 \mathcal{L}(\gamma) |\partial_t \gamma|^2 \, dx dt;   
\end{equation}
\item[{\rm (vi)}] $\gamma(x,0)=\gamma_0(x)$ for a.e. $x \in \mathcal{S}^1$. 
\end{enumerate}
\end{definition}
We remark that the weak formulation given for the flow in (ii) above is adapted to the constant speed framework; it may be considered as the $L^2(dx)$-gradient flow of the $p$-elastic energy. 
We also note that the conditions (iv) and (v) do not imply the energy inequality 
\begin{equation}
\label{eq:1.5}
\mathcal{E}_p(\gamma(\cdot,t)) - \mathcal{E}_p(\gamma_0(\cdot)) \le - \dfrac{1}{2} \int^t_0 \!\!\! \int^1_0 \mathcal{L}(\gamma) |\partial_t \gamma|^2 \, dx dt
\end{equation}
for $t \in [0, T]$. 
Naturally, once a regular enough solution exists, we can reparametrize it (in space and time), so that there is a direct correspondence between the solutions we construct here and solutions to the classical (smooth) $p$-elastic flow.

On the existence of weak solutions to the problem~\eqref{eq:P} we have: 
\begin{theorem} \label{theorem:1.2}
Let $p \ge 2$ and assume that $\gamma_0 \in W^{2,p}(\mathcal{S}^1)$ satisfies \eqref{eq:1.1}.  
Then the problem \eqref{eq:P} possesses a weak global-in-time solution. 
\end{theorem}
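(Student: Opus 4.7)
The plan is to implement a minimizing movements (De Giorgi) scheme in time, adapted to the constant-speed setting. The key observation is that for $\gamma \in \mathcal{AC}_{\gamma_0}$ one has the pointwise identity $|\partial_x\gamma| = \mathcal{L}(\gamma)$, so the geometric energy admits the representation $\mathcal{E}_p(\gamma) = \frac{1}{p\mathcal{L}(\gamma)^{2p-1}}\int_0^1 |\partial_x^2\gamma|^p\,dx + \lambda \mathcal{L}(\gamma)$. Fixing a time step $h>0$ and setting $\gamma_h^0 := \gamma_0$, one inductively defines $\gamma_h^n$ as a minimizer over $\mathcal{AC}_{\gamma_0}$ of
\begin{equation*}
F_h^n(\gamma) := \mathcal{E}_p(\gamma) + \frac{\mathcal{L}(\gamma_h^{n-1})}{2h}\int_0^1 |\gamma-\gamma_h^{n-1}|^2\,dx.
\end{equation*}
Existence of each $\gamma_h^n$ follows from the direct method: the class $\mathcal{AC}_{\gamma_0}$ is sequentially weakly closed in $W^{2,p}$, since weak $W^{2,p}$ convergence in one dimension yields strong $C^1$ convergence and so the identity $|\partial_x\gamma| = \mathcal{L}(\gamma)$ passes to the limit; $F_h^n$ is lower semicontinuous by convexity of $\xi\mapsto|\xi|^p$ and continuity of $\mathcal{L}$; coercivity follows from the upper bound $\lambda\mathcal{L}\le F_h^n$, the Fenchel-type estimate $\mathcal{E}_p \ge (2\pi|k|)^p/(p\mathcal{L}^{p-1})$ at fixed rotation index $k$ (providing a lower bound on $\mathcal{L}$), and control of translations via the quadratic penalty.

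Next I would derive the discrete Euler--Lagrange equation by computing the first variation of $F_h^n$ with respect to arbitrary $\eta \in W^{2,p}(\mathcal{S}^1;\mathbb{R}^2)$, treating $|\partial_x\gamma|$ and $\mathcal{L}(\gamma)$ as explicit functionals of $\gamma$ and using the constant-speed property of $\gamma_h^n$ itself only \emph{a posteriori} to simplify the resulting expressions. This yields
\begin{equation*}
\int_0^1 \Bigl[\frac{|\partial_x^2\gamma_h^n|^{p-2}\partial_x^2\gamma_h^n}{\mathcal{L}(\gamma_h^n)^{2p-1}}\cdot\partial_x^2\eta - \frac{2p-1}{p}\frac{|\partial_x^2\gamma_h^n|^p\partial_x\gamma_h^n}{\mathcal{L}(\gamma_h^n)^{2p+1}}\cdot\partial_x\eta + \frac{\lambda}{\mathcal{L}(\gamma_h^n)}\partial_x\gamma_h^n\cdot\partial_x\eta + \mathcal{L}(\gamma_h^{n-1})\frac{\gamma_h^n-\gamma_h^{n-1}}{h}\cdot\eta\Bigr]\,dx = 0,
\end{equation*}
which is the natural discrete counterpart of Definition~\ref{theorem:1.1}(ii). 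Comparing $F_h^n(\gamma_h^n)$ with $F_h^n(\gamma_h^{n-1})$ and summing delivers the discrete energy--dissipation inequality $\mathcal{E}_p(\gamma_h^N) + \sum_{n=1}^N \frac{\mathcal{L}(\gamma_h^{n-1})}{h}\int_0^1 |\gamma_h^n-\gamma_h^{n-1}|^2\,dx \le \mathcal{E}_p(\gamma_0)$, from which I would extract uniform $L^\infty$-in-$n$ bounds on $\|\gamma_h^n\|_{W^{2,p}}$ and a uniform discrete $H^1$-in-time estimate.

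From $\{\gamma_h^n\}$ one forms the piecewise constant interpolant $\bar\gamma_h$ and the piecewise linear interpolant $\hat\gamma_h$. The previous estimates give $\bar\gamma_h$ bounded in $L^\infty(0,T;W^{2,p})$, $\hat\gamma_h$ bounded in $H^1(0,T;L^2)$, and $\|\bar\gamma_h - \hat\gamma_h\|_{L^2} \to 0$ as $h\to 0$. By Aubin--Lions, along a subsequence there is $\gamma$ such that $\hat\gamma_h \to \gamma$ strongly in $C([0,T];C^{1,\alpha})$ for some $\alpha\in(0,1-1/p)$, $\bar\gamma_h \rightharpoonup^\ast \gamma$ in $L^\infty(0,T;W^{2,p})$, and $\partial_t\hat\gamma_h \rightharpoonup \partial_t\gamma$ weakly in $L^2$. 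The membership $\gamma(\cdot,t)\in\mathcal{AC}_{\gamma_0}$ for a.e.\ $t$ and the initial condition then pass to the limit by the strong $C^{1,\alpha}$ convergence.

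The main obstacle is to pass to the limit in the strongly nonlinear leading term, since weak $W^{2,p}$ convergence alone does not identify $\lim|\partial_x^2\bar\gamma_h|^{p-2}\partial_x^2\bar\gamma_h$ with $|\partial_x^2\gamma|^{p-2}\partial_x^2\gamma$. I would resolve this by a Minty--Browder monotonicity argument: the map $\xi\mapsto|\xi|^{p-2}\xi$ is strictly monotone, and the limiting energy--dissipation inequality (obtained from the discrete one by weak lower semicontinuity) combined with testing the limit equation against $\gamma$ itself yields $\limsup\iint|\partial_x^2\bar\gamma_h|^p\,dx\,dt \le \iint|\partial_x^2\gamma|^p\,dx\,dt$, hence strong $L^p$ convergence of $\partial_x^2\bar\gamma_h$. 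With this in hand, the semilinear term $|\partial_x^2\bar\gamma_h|^p\partial_x\bar\gamma_h/\mathcal{L}(\bar\gamma_h)^{2p+1}$ passes by dominated convergence (using uniform convergence of $\partial_x\bar\gamma_h$ and the positive lower bound on $\mathcal{L}(\bar\gamma_h)$), the time-derivative term passes by weak convergence, and one concludes that $\gamma$ satisfies Definition~\ref{theorem:1.1}(i)--(iv) on $[0,T]$; since $T$ is arbitrary, a diagonal subsequence gives a global-in-time weak solution of \eqref{eq:P}.
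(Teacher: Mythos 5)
Your overall architecture (minimizing movements over $\mathcal{AC}_{\gamma_0}$, piecewise interpolants, compactness, passage to the limit) is the same as the paper's, and your existence argument for each discrete minimizer is essentially correct. The first genuine gap is in the discrete Euler--Lagrange equation. The minimization is over the \emph{constrained} class $\mathcal{AC}_{\gamma_0}$, and for a generic $\eta\in W^{2,p}(\mathcal{S}^1;\mathbb{R}^2)$ the competitor $\gamma_h^n+\delta\eta$ leaves this class, so minimality gives no information about $\frac{d}{d\delta}F_h^n(\gamma_h^n+\delta\eta)\big|_{\delta=0}$; you cannot treat $|\partial_x\gamma|$ as an unconstrained functional and invoke the constant-speed property only ``a posteriori.'' (Note also that the penalty term, unlike $\mathcal{E}_p$, is not parametrization-invariant, so the constrained and unconstrained critical point conditions genuinely differ.) The paper (Lemmas \ref{theorem:3.5}--\ref{theorem:3.6}) builds admissible variations by reparametrizing $\gamma+\delta\eta$ back to constant speed, and the resulting Euler--Lagrange identity carries an extra term $\mathcal{L}(\gamma_h^{n-1})\int_0^1 \frac{\gamma_h^n-\gamma_h^{n-1}}{h}\cdot\Phi_1(\gamma_h^n,\eta)\,\partial_x\gamma_h^n\,dx$ which does not vanish at the discrete level (its vanishing for all $\eta$ would force the discrete velocity to be purely normal, which there is no reason to expect). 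Showing that this term disappears in the limit, so that Definition~\ref{theorem:1.1}(ii) holds in the clean form you wrote, is a substantial piece of the proof (Lemmas \ref{theorem:4.5}--\ref{theorem:4.6}): it requires the extra regularity $\partial_x(|\partial^2_x\gamma|^{p-2}\partial^2_x\gamma)\in L^2(0,T;L^{p/(p-1)}(\mathcal{S}^1))$ and the special test functions $\rho\,\partial_x\gamma$. Your proposal omits all of this.

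The second gap is the identification of the nonlinear limit. The inequality $\limsup_h\iint|\partial_x^2\bar\gamma_h|^p\,dx\,dt\le\iint|\partial_x^2\gamma|^p\,dx\,dt$ does not follow from the discrete energy--dissipation inequality (which only bounds the energy at the terminal time from above) together with ``testing the limit equation against $\gamma$'': the limit equation is not available until the nonlinear term has been identified, so the argument as stated is circular. A monotonicity argument can plausibly be salvaged by testing the \emph{discrete} equation against $\bar\gamma_h-\gamma$ and exploiting the strong $C^{1,\alpha}$ convergence, but that version must also carry along the missing tangential term. The paper proceeds differently: by testing the discrete Euler--Lagrange equation against the primitives $\varphi_1,\varphi_2$ of Lemma~\ref{theorem:3.7} it derives the a priori bounds $\int_0^T\|\partial_x^2\tilde\gamma_n\|_{L^\infty}^{2(p-1)}dt\le C$ and $\int_0^T\|\partial_x(|\partial_x^2\tilde\gamma_n|^{p-2}\partial_x^2\tilde\gamma_n)\|_{L^2}^2dt\le C$ (Lemma~\ref{theorem:3.8}); these give weak $L^2(0,T;H^1)$ compactness of the nonlinear quantity, a convexity argument (Lemma~\ref{theorem:3.12}) then identifies its limit, and Lemma~\ref{theorem:3.13} handles the critical lower-order term $|\partial_x^2\bar\gamma_h|^p\partial_x\bar\gamma_h$. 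Without either these estimates or a correctly set up Minty argument, the passage to the limit in the two nonlinear terms is not justified.
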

The authors of this paper and Pozzi \cite{OPW} proved the existence of local-in-time weak solutions of the $L^2(ds)$-gradient flow for $\mathcal{E}_p$ with $p \ge 2$ under the inextensibility constraint.
The resultant flow is a second order parabolic equation. 
Moreover, it was proved in \cite{OPW} that the weak solution can be extended to a global-in-time solution for the case $p=2$. 
Recently Blatt--Vorderobermeier--Hopper \cite{BVH} independently proved the existence of local-in-time weak solutions to the problem \eqref{eq:P} with $p \ge 2$ for closed space curves in $\mathbb{R}^n$.   
Although Definition~\ref{theorem:1.1} is slightly different from the definition of weak solutions in \cite{BVH} or \cite{OPW}, 
Theorem~\ref{theorem:1.2} gives an extension of \cite{BVH,OPW} for the case $p>2$.  

Thanks to Theorem \ref{theorem:1.2}, it is natural to ask whether weak solutions to \eqref{eq:P} converge to an equilibrium as $t \to \infty$. 
To this end, generally one requires a uniform bound for $\mathcal{E}_p$ along the flow with respect to $t$. 
Although \eqref{eq:1.3} and \eqref{eq:1.4} give a uniform estimate of weak solutions of \eqref{eq:P}, 
it is not enough to prove the convergence of weak solutions as $t \to \infty$. 
We expect that one of the key difficulties may be the lack of uniqueness of weak solutions of~\eqref{eq:P} with $p > 2$. 
If the uniqueness of weak solutions to problem \eqref{eq:P} with $p>2$ is proved, perhaps for a certain class, 
then we can also prove the subconvergence of weak solutions of problem \eqref{eq:P} in this class with $p>2$. 
For the case $p=2$, we obtain the uniqueness of weak solutions to \eqref{eq:P}. 
This gives the following energy inequality stronger than \eqref{eq:1.5}: 
\begin{theorem} \label{theorem:1.3}
Let $p=2$. Assume that $\gamma_0 \in W^{2,2}(\mathcal{S}^1;\mathbb{R}^2)$ satisfies \eqref{eq:1.1}. 
Then the problem \eqref{eq:P} possesses a unique global-in-time weak solution $\gamma$ such that 
\begin{equation}
\label{eq:1.6}
\mathcal{E}_2(\gamma(\tau_2)) - \mathcal{E}_2(\gamma(\tau_1)) 
 \le -\dfrac{1}{2} \int^{\tau_2}_{\tau_1} \!\!\! \int^1_0 \mathcal{L}(\gamma) |\partial_t \gamma|^2 dx dt
\end{equation}
for all $0 \le \tau_1 \le \tau_2 < \infty$. 
Moreover, there exist a sequence $\{ p_k \} \subset \mathbb{R}^2$ and a monotone divergent sequence $\{ t_k \} \subset (0, \infty)$ such that 
$\gamma(\cdot,t_k)-p_k$ converges to an elastica in the $H^4$-weak topology. 
\end{theorem}
One can infer from Theorem \ref{theorem:1.3} that our weak formulation of \eqref{eq:P} can work well at least for the case $p=2$. 
Moreover, Theorem \ref{theorem:1.3} gives an extension of \cite{OPW} for the case $p=2$.  

The paper is organized as follows. 
In Section~\ref{section:2} we collect several inequalities used in this paper. 
We construct approximations of weak solutions to~\eqref{eq:P} via minimizing movements in Section~\ref{section:3}: 
we prove the existence and regularity of approximating solutions in Section~\ref{subsection:3.1}; and the convergence of approximating solutions in Section~\ref{subsection:3.2}.  
In Section~\ref{section:4} we prove Theorem~\ref{theorem:1.2}. 
Finally we prove Theorem~\ref{theorem:1.3} in Section~\ref{section:5}.


\section{Preliminaries} \label{section:2}

We use the following interpolation inequalities (see e.g. \cite{Adams}, \cite[Theorem 6.4]{FFLM_2012}). 
\begin{proposition} \label{theorem:2.1}
Let $\Omega \subset \mathbb{R}^N$ be a bounded open set satisfying the cone condition. 
Let $k$, $l$ and $m$ be integers such that $0 \le k \le l \le m$. 
Let $1 \le q \le r < \infty$ if $(m-l)q \ge N$, or 
let $1 \le q \le r \le \infty$ if $(m-l)q > N$. 
Then there exists $A>0$ such that for all $u \in W^{m,q}(\Omega)$ it holds 
$$
\| D^l u \|_{L^r(\Omega)} \le A ( \| D^m u \|^\theta_{L^q(\Omega)} \| D^k u \|^{1-\theta}_{L^q(\Omega)} + \| D^k u \|_{L^q(\Omega)}), 
$$
where 
$$
\theta := \dfrac{1}{m-k}\Bigl( \dfrac{N}{q} - \dfrac{N}{r} + l - k \Bigr). 
$$
In particular, if $u \in W^{m,q}_0(\Omega)$,  then 
$$
\| D^l u \|_{L^r(\Omega)} \le A \| D^m u \|^\theta_{L^q(\Omega)} \| D^k u \|^{1-\theta}_{L^q(\Omega)}.  
$$
\end{proposition}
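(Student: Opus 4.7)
This is the classical Gagliardo--Nirenberg interpolation inequality, and the authors cite it from the literature, so my plan here is to indicate the standard route to the result rather than grind through the full argument.

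First I would pin down the exponent $\theta$ by a scaling argument on $\mathbb{R}^N$. Under the rescaling $u_\lambda(x)=u(\lambda x)$ one has $\|D^j u_\lambda\|_{L^q}^q = \lambda^{jq-N}\|D^j u\|_{L^q}^q$. Demanding that the purely multiplicative inequality $\|D^l u\|_{L^r}\le A\|D^m u\|_{L^q}^\theta\|D^k u\|_{L^q}^{1-\theta}$ be invariant under $\lambda\mapsto \lambda u_\lambda$ forces
\[
\theta = \frac{1}{m-k}\Bigl(\frac{N}{q}-\frac{N}{r}+l-k\Bigr),
\]
which is precisely the exponent in the statement; the case distinction on $(m-l)q$ vs.\ $N$ and the range of $r$ correspond exactly to the condition $\theta \in [0,1]$.

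Second, I would establish the estimate on all of $\mathbb{R}^N$ for smooth, rapidly decaying functions. The building block is the one-dimensional inequality $\|v'\|_{L^r(\mathbb{R})}\le C \|v''\|_{L^q(\mathbb{R})}^\theta \|v\|_{L^q(\mathbb{R})}^{1-\theta}$, proved either via a Bessel-potential representation of the identity or, more elementarily, by writing $v'(x)$ through the fundamental theorem of calculus on an interval of length $h$, estimating it by $h\|v''\|_{L^q}$ on one end and by $h^{-1}\|v\|_{L^q}$ on the other, and optimising in $h$ followed by Young's inequality. The $N$-dimensional statement with a single derivative then follows by applying this on coordinate lines and using Minkowski's integral inequality, and arbitrary $0\le k\le l\le m$ is obtained by iteration, with the exponents combining to give the value of $\theta$ above. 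For $u\in W^{m,q}_0(\Omega)$, extension by zero and application of the $\mathbb{R}^N$ inequality yields the purely multiplicative bound.

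Third, I would pass from $\mathbb{R}^N$ to a domain $\Omega$ satisfying the cone condition. The cone condition is exactly what is needed to construct a bounded extension operator $E\colon W^{m,q}(\Omega)\to W^{m,q}(\mathbb{R}^N)$ (via convolution with mollifiers adapted to the local cones, as in Adams, Ch.~V). Applying the $\mathbb{R}^N$ version of the inequality to $Eu$ and using $\|D^j(Eu)\|_{L^q(\mathbb{R}^N)}\le C\|D^j u\|_{L^q(\Omega)}$ produces the stated inequality; the additive lower-order term $\|D^k u\|_{L^q(\Omega)}$ is the price paid because the extension operator is bounded, not homogeneous. The main obstacle, and the only place where any real work is required, is this extension construction under the merely cone-type regularity of $\partial\Omega$; every other step is a scaling check or a one-dimensional estimate glued together by Minkowski's inequality.
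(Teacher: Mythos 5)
The paper does not prove Proposition~\ref{theorem:2.1} at all: it is quoted verbatim from the literature (Adams--Fournier and \cite[Theorem 6.4]{FFLM_2012}), so there is no internal argument to compare yours against. Your outline is the standard textbook route and is essentially sound: the scaling computation does pin down $\theta$ correctly, the one-dimensional two-point estimate optimised in $h$ plus Minkowski's integral inequality and iteration gives the multiplicative inequality on $\mathbb{R}^N$, and extension by zero handles the $W^{m,q}_0$ case. The one step I would flag is the reduction from $\Omega$ to $\mathbb{R}^N$ via ``the cone condition is exactly what is needed to construct a bounded extension operator.'' That is not quite right at the stated level of generality: Calder\'on's extension theorem requires the \emph{uniform} cone condition and $1<q<\infty$, whereas the proposition assumes only the plain cone condition and allows $q=1$ (and $r=\infty$). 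The cited source avoids extension altogether and proves the inequality directly on cone-condition domains by decomposing $\Omega$ into finitely many subdomains, each a union of parallel translates of a fixed parallelepiped, establishing the estimate on parallelepipeds, and summing; the additive term $\|D^k u\|_{L^q(\Omega)}$ arises there from the bounded geometry of the pieces rather than from the non-homogeneity of an extension operator. For the use made of the proposition in this paper ($\Omega=\mathcal{S}^1$, $N=1$) the distinction is immaterial, but as a proof of the statement as written your third step would need to be replaced by such a direct localisation argument, or the hypotheses strengthened to a uniform cone (or Lipschitz) condition.
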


\begin{lemma}
\label{theorem:2.2}
Let $p \ge 2$. 
Let $\gamma : \mathcal{S}^1 \to \mathbb{R}^2$ be a closed curve. 
Then 
\begin{equation}
\label{eq:2.1}
\mathcal{L}(\gamma) \ge \Bigl[ \dfrac{(2 \pi)^{p}}{p E_p(\gamma)} \Bigr]^{\frac{1}{p-1}}.  
\end{equation}
\end{lemma}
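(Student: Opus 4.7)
The plan is to derive the bound by combining a topological lower bound on the total absolute curvature of a closed planar curve with Hölder's inequality on the $L^p$ norm of the curvature.

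The starting point is the classical fact (a consequence of Fenchel's theorem, or more simply, of the rotation index theorem for immersed closed planar curves) that for any closed curve $\gamma$,
\begin{equation*}
\int_\gamma |\kappa_\gamma| \, ds \ge 2\pi.
\end{equation*}
First I would invoke this lower bound. Then, since $p \ge 2 > 1$, Hölder's inequality with conjugate exponents $p$ and $p/(p-1)$ applied to the product $|\kappa_\gamma| \cdot 1$ on $\gamma$ yields
\begin{equation*}
\int_\gamma |\kappa_\gamma| \, ds \le \Bigl( \int_\gamma |\kappa_\gamma|^p \, ds \Bigr)^{1/p} \Bigl( \int_\gamma 1 \, ds \Bigr)^{(p-1)/p} = \bigl( p E_p(\gamma) \bigr)^{1/p} \mathcal{L}(\gamma)^{(p-1)/p}.
\end{equation*}

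Chaining these two inequalities and raising to the $p$-th power gives $(2\pi)^p \le p E_p(\gamma) \mathcal{L}(\gamma)^{p-1}$, and rearranging (legitimately, since $E_p(\gamma) > 0$ whenever the bound is nontrivial) produces exactly~\eqref{eq:2.1}. In the degenerate case $E_p(\gamma) = 0$, the curvature vanishes identically, which is incompatible with $\gamma$ being closed (its total curvature cannot reach $2\pi$), so we may assume $E_p(\gamma) > 0$ throughout.

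There is no real obstacle: the only substantive ingredient is the topological lower bound $\int_\gamma |\kappa| \, ds \ge 2\pi$. If one prefers to avoid Fenchel's theorem, one can argue directly by observing that the unit tangent $T = \partial_s\gamma$ is a closed curve on $\mathcal{S}^1$ whose velocity is $\kappa_\gamma \, \mathbf{n}$, so $T$ must wrap around $\mathcal{S}^1$ at least once, giving the same lower bound. The rest is a one-line application of Hölder.
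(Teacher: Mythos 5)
Your proof is correct, but it follows a genuinely different route from the paper's. The paper's proof applies the sharp Poincar\'e (Wirtinger) inequality to the zero-mean periodic function $\gamma_s$, obtaining $\mathcal{L}(\gamma) \le \frac{\mathcal{L}(\gamma)^2}{4\pi^2}\int_\gamma |\kappa|^2\,ds$, and then uses H\"older to pass from the $L^2$ to the $L^p$ norm of the curvature; this last step is where the hypothesis $p\ge 2$ enters. You instead start from the topological lower bound $\int_\gamma|\kappa|\,ds\ge 2\pi$ (Fenchel's theorem in the plane) and use H\"older to pass from $L^1$ to $L^p$. Both arguments produce the same sharp constant $(2\pi)^p$, but yours has the advantage of working for every $p>1$, not just $p\ge 2$, since the $L^1$-to-$L^p$ H\"older step needs no restriction; the paper's version buys a more self-contained, purely analytic argument (a one-line spectral inequality plus H\"older) that avoids invoking any global geometric theorem. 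One small caveat on your ``elementary'' alternative to Fenchel: the unit tangent of a closed immersed plane curve need \emph{not} wrap around $\mathcal{S}^1$ (a figure-eight has turning number zero), so the statement ``$T$ must wrap around $\mathcal{S}^1$ at least once'' is not literally true. The correct elementary argument is that $\int_0^{\mathcal{L}} T\,ds = 0$ forces the image of $T$ not to lie in any open half-circle, whence the total variation of the angle, which equals $\int_\gamma|\kappa|\,ds$, is at least $2\pi$; with that repair (or simply citing Fenchel, as you also do), your proof is complete.
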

\begin{proof}
By Poincar\`e's inequality (e.g., see \cite[Proposition 5.9]{BGH}) we have 
\begin{align*}
\mathcal{L}(\gamma)=\int^{\mathcal{L}(\gamma)}_0 |\gamma_s|^2 \, ds \le \dfrac{\mathcal{L}(\gamma)^2}{4 \pi^2} \int^{\mathcal{L}(\gamma)}_0 |\gamma_{ss}|^2 \, ds,  
\end{align*}
where $s$ denotes the arc length parameter of $\gamma$. 
This together with H\"older's inequality implies that  
\begin{align*}
\dfrac{1}{\mathcal{L}(\gamma)} \le \dfrac{1}{4 \pi^2} \mathcal{L}(\gamma)^{\frac{p-2}{p}} \Bigl[ \int^{\mathcal{L}(\gamma)}_0 |\gamma_{ss}|^p \, ds \Bigr]^{\frac{2}{p}} 
 = \dfrac{p^{\frac{2}{p}}}{4 \pi^2} \mathcal{L}(\gamma)^{\frac{p-2}{p}} E_p(\gamma)^{\frac{2}{p}}. 
\end{align*}
Then we obtain \eqref{eq:2.1}. 
\end{proof}

\section{Approximate solutions} \label{section:3}
Let $\gamma_0 \in W^{2,p}(\mathcal{S}^1;\mathbb{R}^2)$ satisfy \eqref{eq:1.1}. 
In this section, we fix such $\gamma_0$ arbitrarily, and denote the admissible set $\mathcal{AC}_{\gamma_0}$ by $\mathcal{AC}$ for short. 

We note that for $\gamma \in \mathcal{AC}$ we have 
\begin{equation}
\label{eq:3.1}
\gamma_{xx}(x) = \mathcal{L}(\gamma) \kappa_\gamma(x) \mathcal{R} (\gamma_x)(x) \quad \text{for a.e.} \quad x \in I, 
\end{equation}
where 
$$
\mathcal{R}:= \begin{pmatrix} 0 & -1 \\ 1 & 0 \end{pmatrix}. 
$$ 
We construct a family of approximate solutions via minimizing movements. 
Let $n \in \mathbb{N}$, $T>0$, and set $\tau_n := T/n$. 
Let $\gamma_{0,n}:= \gamma_0 \in \mathcal{AC}$. 
We define $\{\gamma_{i,n}\}^{n}_{i=0}$ inductively. 
More precisely, we define $\gamma_{i,n}$ as a minimizer of the minimization problem: 
\begin{equation}
\label{Min} \tag{${\rm M}_{i,n}$}
\min_{\gamma \in \mathcal{AC}} G_{i,n}(\gamma), 
\end{equation}
where $G_{i,n}(\gamma) := \mathcal{E}_p(\gamma) + P_n(\gamma, \gamma_{i-1,n})$ with 
\begin{align*}
P_n(\gamma, \tilde{\gamma}) := \dfrac{\mathcal{L}(\tilde{\gamma})}{2 \tau_n} \int^1_0 |\gamma-\tilde{\gamma}|^2 \, dx  
\end{align*}
for $\gamma, \tilde{\gamma} \in \mathcal{AC}$. 

\subsection{Existence and regularity} \label{subsection:3.1}
To begin with, we prove the existence of approximate solutions $\{ \gamma_{i,n} \}$: 
\begin{lemma} \label{theorem:3.1}
For each $i=1$, $2,\ldots$, problem \eqref{Min} has a minimizer $\gamma_{i,n}$.  
\end{lemma}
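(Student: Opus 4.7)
The plan is to apply the direct method of the calculus of variations to the functional $G_{i,n}$ over $\mathcal{AC}$. First I would note that $\mathcal{AC}$ is nonempty since it contains $\gamma_{i-1,n}$ (inductively, with $\gamma_{0,n}=\gamma_0$), and $G_{i,n}(\gamma_{i-1,n})=\mathcal{E}_p(\gamma_{i-1,n})<\infty$, so $\inf G_{i,n}$ is finite and nonnegative. Fix a minimizing sequence $\{\gamma_{k}\}\subset\mathcal{AC}$ with $G_{i,n}(\gamma_k)\le\mathcal{E}_p(\gamma_{i-1,n})+1$ for all large $k$. From this I read off:
\begin{equation*}
E_p(\gamma_k)\le C,\qquad \lambda\mathcal{L}(\gamma_k)\le C,\qquad \int_0^1|\gamma_k-\gamma_{i-1,n}|^2\,dx\le \frac{2\tau_n}{\mathcal{L}(\gamma_{i-1,n})}C.
\end{equation*}
By Lemma~\ref{theorem:2.2} the uniform bound on $E_p$ gives a positive lower bound $\mathcal{L}(\gamma_k)\ge c_0>0$. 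Since $\gamma_k\in\mathcal{AC}$, identity \eqref{eq:3.1} yields $|\partial_{xx}\gamma_k|^p=\mathcal{L}(\gamma_k)^{2p}|\kappa_{\gamma_k}|^p$, and integrating in $x$ (using $ds=\mathcal{L}(\gamma_k)\,dx$) gives $\int_0^1|\partial_{xx}\gamma_k|^p\,dx=p\,\mathcal{L}(\gamma_k)^{2p-1}E_p(\gamma_k)\le C$. Combined with the pointwise bound $|\partial_x\gamma_k|=\mathcal{L}(\gamma_k)\le C$ and the $L^2$-bound on $\gamma_k$, I obtain $\|\gamma_k\|_{W^{2,p}(\mathcal{S}^1)}\le C$.

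Up to subsequence, Banach--Alaoglu and the Rellich--Kondrachov embedding $W^{2,p}(\mathcal{S}^1)\hookrightarrow C^1(\mathcal{S}^1)$ give $\gamma_k\rightharpoonup\gamma$ in $W^{2,p}$ and $\gamma_k\to\gamma$ in $C^1$. The next key step is to verify that $\gamma\in\mathcal{AC}$. Uniform convergence of $\partial_x\gamma_k$ passes to lengths, $\mathcal{L}(\gamma_k)\to\mathcal{L}(\gamma)$, and pointwise for each $x\in\mathcal{S}^1$,
\begin{equation*}
|\partial_x\gamma(x)|=\lim_{k\to\infty}|\partial_x\gamma_k(x)|=\lim_{k\to\infty}\mathcal{L}(\gamma_k)=\mathcal{L}(\gamma),
\end{equation*}
so the constant-speed constraint is preserved and $\gamma\in\mathcal{AC}$; in particular $\mathcal{L}(\gamma)\ge c_0>0$.

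It remains to establish lower semicontinuity of $G_{i,n}$ along this subsequence. The term $\lambda\mathcal{L}(\gamma_k)$ converges to $\lambda\mathcal{L}(\gamma)$, and the penalty $P_n(\gamma_k,\gamma_{i-1,n})$ converges by strong $C^0\subset L^2$ convergence. For $E_p$, using the formula
\begin{equation*}
E_p(\gamma)=\frac{1}{p\,\mathcal{L}(\gamma)^{2p-1}}\int_0^1|\partial_{xx}\gamma|^p\,dx
\end{equation*}
valid on $\mathcal{AC}$, the numerator is weakly lower semicontinuous in $W^{2,p}$ by convexity of $v\mapsto\int|v|^p$, while the factor $\mathcal{L}(\gamma_k)^{2p-1}$ converges to the positive limit $\mathcal{L}(\gamma)^{2p-1}$; hence $E_p(\gamma)\le\liminf_{k\to\infty}E_p(\gamma_k)$. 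Combining these, $G_{i,n}(\gamma)\le\liminf_{k\to\infty}G_{i,n}(\gamma_k)=\inf_{\mathcal{AC}}G_{i,n}$, so $\gamma_{i,n}:=\gamma$ is the required minimizer. The main obstacle is the nonlinear constraint: both ensuring $\mathcal{L}(\gamma_k)$ stays bounded away from $0$ (so that weak $W^{2,p}$ compactness actually yields a meaningful energy bound through Lemma~\ref{theorem:2.2}) and verifying that $|\partial_x\gamma|\equiv\mathcal{L}(\gamma)$ survives weak $W^{2,p}$ passage to the limit, which is ultimately bought by the $C^1$ compactness.
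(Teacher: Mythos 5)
Your proposal is correct and follows essentially the same route as the paper: a minimizing sequence bounded via the uniform energy bound, the identity $\int_0^1|\gamma_{xx}|^p\,dx = p\,\mathcal{L}(\gamma)^{2p-1}E_p(\gamma)$ on $\mathcal{AC}$ together with Lemma~\ref{theorem:2.2} for coercivity, weak $W^{2,p}$ plus strong $C^1$ compactness to preserve the constraint, and weak lower semicontinuity of the $W^{2,p}$ seminorm combined with convergence of the lengths for the energy. The only cosmetic difference is that the paper bounds $\|\gamma_j\|_{L^p}$ through the interpolation inequality of Proposition~\ref{theorem:2.1}, whereas you pass directly from the $L^2$ bound and the gradient bound; both are fine.
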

\begin{proof}
If $\gamma_{i,n} \in \mathcal{AC}$ is a solution to \eqref{Min} for some $i \in \mathbb{N}$, 
by the minimality of $\gamma_{i,n}$, we have 
\begin{equation}
\label{eq:3.2}
G_{i,n}(\gamma_{i,n}) = \inf_{\gamma \in \mathcal{AC}}G_{i,n}(\gamma) \le G_{i,n}(\gamma_{i-1,n}) = \mathcal{E}_p(\gamma_{i-1,n}). 
\end{equation}
This together with the non-negativity of $P_n(\cdot, \cdot)$ implies that 
\begin{equation*}
\mathcal{E}_p(\gamma_{i,n}) \le \mathcal{E}_p(\gamma_{i-1,n}),  
\end{equation*}
in particular, 
\begin{equation}
\label{eq:3.3}
\mathcal{E}_p(\gamma_{i,n}) \le \mathcal{E}_p(\gamma_0). 
\end{equation}

Let $\{ \gamma_j \} \subset \mathcal{AC}$ be a minimizing sequence for \eqref{Min}, that is,  
$$
\lim_{j \to \infty} G_{i,n}(\gamma_j) = \inf_{\gamma \in \mathcal{AC}}G_{i,n}(\gamma). 
$$
By \eqref{eq:3.2} and \eqref{eq:3.3} we may assume that  
\begin{equation}
\label{eq:3.4}
G_{i,n}(\gamma_j) \le 2 \mathcal{E}_p(\gamma_0) \quad \text{for all} \quad j \in \mathbb{N}.   
\end{equation}
This together with the non-negativity of $P_n(\cdot, \cdot)$ implies that    
\begin{equation}
\label{eq:3.5}
\dfrac{\mathcal{L}(\gamma_j)}{p} \int^1_0 |\kappa_{\gamma_j}|^p \, dx + \lambda \mathcal{L}(\gamma_j) 
 \le 2 \mathcal{E}_p(\gamma_0) \quad \text{for} \quad j \in \mathbb{N}.  
\end{equation}
Since $\gamma_{j} \in \mathcal{AC}$, we deduce from \eqref{eq:3.5} that  
\begin{equation}
\label{eq:3.6}
\begin{split}
\int^1_0 |(\gamma_j)_x|^p \, dx 
  = \mathcal{L}(\gamma_j)^p 
  \le \Bigl( \dfrac{2}{\lambda} \mathcal{E}_p(\gamma_0) \Bigr)^p 
\end{split}
\end{equation}
for all $j \in \mathbb{N}$. 
Combining \eqref{eq:3.1} with \eqref{eq:3.5}, we obtain 
\begin{align}
\label{eq:3.7}
\begin{split}
\int^1_0 |(\gamma_j)_{xx}|^p \, dx &= \int^1_0 |\mathcal{L}(\gamma_j) \kappa_{\gamma_j} \mathcal{R}(\gamma_j)_x|^p \, dx \\
  &\le \mathcal{L}(\gamma_j)^{2p-1} \cdot \mathcal{L}(\gamma_j) \int^1_0 |\kappa_{\gamma_j}|^p \, dx 
    \le  p \lambda \Bigl( \dfrac{2}{\lambda} \mathcal{E}_p(\gamma_0) \Bigr)^{2p}.   
\end{split}
\end{align}
It follows from \eqref{eq:3.4} that 
\begin{align*}
\int^1_0 |\gamma_j|^2 \, dx 
&\le 2 \int^1_0 |\gamma_j - \gamma_{i-1,n}|^2 \, dx +  2 \int^1_0 |\gamma_{i-1,n}|^2 \, dx \\
&= \dfrac{4 \tau_n}{\mathcal{L}(\gamma_{i-1,n})} P_n(\gamma_j, \gamma_{i-1,n}) + 2 \int^1_0 |\gamma_{i-1,n}|^2 \, dx \\
&\le \dfrac{8 \tau_n \mathcal{E}(\gamma_0)}{\mathcal{L}(\gamma_{i-1,n})} + 2 \int^1_0 |\gamma_{i-1,n}|^2 \, dx.  
\end{align*}
This together with Lemma \ref{theorem:2.2} and \eqref{eq:3.4} implies that 
\begin{equation}
\label{eq:3.8}
\int^1_0 |\gamma_j|^2 \, dx 
\le 8 \tau_n \mathcal{E}_p(\gamma_0) \Bigl[ \dfrac{p \mathcal{E}_p(\gamma_0)}{(2 \pi)^p} \Bigr]^{\frac{1}{p-1}} + 2 \int^1_0 |\gamma_{i-1,n}|^2 \, dx. 
\end{equation}
Combining \eqref{eq:3.7} and \eqref{eq:3.8} with Proposition \ref{theorem:2.1}, 
we find a constant $C>0$ depending only on $p$, $\lambda$, $\mathcal{E}_p(\gamma_0)$, $\|\gamma_{i-1,n}\|_{L^2(\mathcal{S}^1)}$ and $T$ such that 
\begin{equation}
\label{eq:3.9}
\begin{split}
\|\gamma_j \|_{L^p(\mathcal{S}^1)} 
 &\le A \bigl( \| (\gamma_j)_{xx} \|_{L^2(\mathcal{S}^1)}^\theta \| \gamma_j \|_{L^2(\mathcal{S}^1)}^{1-\theta} + \| \gamma_j \|_{L^2(\mathcal{S}^1)} \bigr) \\
 &\le A \bigl( \| (\gamma_j)_{xx} \|_{L^p(\mathcal{S}^1)}^\theta \| \gamma_j \|_{L^2(\mathcal{S}^1)}^{1-\theta} + \| \gamma_j \|_{L^2(\mathcal{S}^1)} \bigr) 
 \le C,  
\end{split}
\end{equation}
where $\theta = (p-2)/(4p)$. 
Thanks to \eqref{eq:3.6}, \eqref{eq:3.7} and \eqref{eq:3.9}, we find $\gamma \in W^{2,p}(\mathcal{S}^1)$ such that 
\begin{align}
&\gamma_j \rightharpoonup \gamma \quad \text{weakly in} \quad W^{2,p}(\mathcal{S}^1), \label{eq:3.10} \\
&\gamma_j \to \gamma \quad \text{in} \quad C^{1,\alpha}(\mathcal{S}^1), \label{eq:3.11}
\end{align}
up to a subsequence, where $\alpha \in (0, 1-1/p)$.  
By \eqref{eq:3.11} and $\gamma_j \in \mathcal{AC}$ we see that $\gamma \in \mathcal{AC}$. 

Finally we verify that $\gamma$ is the desired minimizer. 
We deduce from \eqref{eq:3.10} and \eqref{eq:3.11} that 
\begin{equation}
\label{eq:3.12}
\liminf_{n \to \infty} \| (\gamma_j)_{xx} \|_{L^p(\mathcal{S}^1)} \ge \| \gamma_{xx} \|_{L^p(\mathcal{S}^1)}. 
\end{equation}
Since $\gamma \in \mathcal{AC}$ implies that $\gamma_x \cdot \gamma_{xx}=0$, i.e., 
\begin{equation*}
|\gamma_{xx}| = \dfrac{|\gamma_{xx} \cdot \mathcal{R}\gamma_x|}{\mathcal{L}(\gamma)} \quad \text{for each} \quad \gamma \in \mathcal{AC},  
\end{equation*}
this together with \eqref{eq:3.11} and \eqref{eq:3.12} implies that 
\begin{align*}
\liminf_{j \to \infty} E_p(\gamma_j) 
 &= \liminf_{n \to \infty} \dfrac{1}{p \mathcal{L}(\gamma_j)^{2p-1}} \int^1_0 |(\gamma_j)_{xx}|^p \, dx \\
 &= \dfrac{1}{p \mathcal{L}(\gamma)^{2p-1}} \liminf_{n \to \infty} \int^1_0 |(\gamma_j)_{xx}|^p \, dx \\
 &\ge \dfrac{1}{p \mathcal{L}(\gamma)^{2p-1}} \int^1_0 |\gamma_{xx}|^p \, dx 
  = E_p(\gamma). 
\end{align*}
Thus we obtain 
$$
\inf_{\gamma \in \mathcal{AC}}G_{i,n}(\gamma) = \lim_{j \to \infty} G_{i,n}(\gamma_j) 
 \ge G_{i,n}(\gamma). 
$$
Therefore Lemma \ref{theorem:3.1} follows. 
\end{proof}

In the following, we define $V_{i,n} : \mathcal{S}^1 \to \mathbb{R}^2$ by 
$$
V_{i,n}(x) := \frac{\gamma_{i,n}(x) - \gamma_{i-1,n}(x)}{\tau_{n}}.
$$

We define the piecewise linear interpolation of $\{\gamma_{i,n}\}$ as follows: 
\begin{definition} \label{theorem:3.2}
We define $\gamma_{n}(x,t) : \mathcal{S}^1 \times [0,T] \to \mathbb{R}^2$ by 
$$
\gamma_{n}(x,t):=\gamma_{i-1,n}(x) + \bigl( t-(i-1)\tau_{n} \bigr)V_{i,n}(x), 
$$
if $(x,t) \in \mathcal{S}^1 \times [(i-1)\tau_{n},i \tau_{n}]$ for each $i=1,\ldots,n$. 
\end{definition}

Furthermore, we also make use of piecewise constant interpolations of $\{\gamma_{i,n}\}$ and $\{ V_{i,n}\}$:  
\begin{definition} \label{theorem:3.3}
We define $\tilde{\gamma}_{n} : \mathcal{S}^1 \times (0,T] \to \mathbb{R}^2$, $\tilde{\Gamma}_n : \mathcal{S}^1 \times (0,T] \to \mathbb{R}^2$ and $V_n : \mathcal{S}^1 \times (0,T] \to \mathbb{R}^2$ as 
$$
\tilde{\gamma}_{n}(x,t) :=\gamma_{i,n}(x), \quad 
\tilde{\Gamma}_n(x,t) := \gamma_{i-1,n}(x), \quad  
V_{n}(x,t) :=V_{i,n}(x), 
$$
if $(x,t) \in \mathcal{S}^1 \times ((i-1) \tau_{n},i \tau_{n}]$ for each $i=1, \ldots, n$, respectively. 
\end{definition}

From now on, we consider the regularity of approximate solutions. 
First we have: 
\begin{lemma} \label{theorem:3.4}
Let $\{ \gamma_{i,n}\}$ be a family of closed curves obtained by Lemma \ref{theorem:3.1}. 
There exist constants $C^*>0$ and $C_*>0$ being independent of $n$ such that 
\begin{align}
\sup_{1 \le i \le n} \| \gamma_{i,n} \|_{W^{2,p}(\mathcal{S}^1)} & \le C^*,  \label{eq:3.13} \\
\int^{T}_0\!\!\! \int^1_0 |V_n(x,t)|^2 \, dx dt & \le C_*. \label{eq:3.14}
\end{align}
\end{lemma}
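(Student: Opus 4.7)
The plan is to exploit the minimality inequality telescopically, combine it with the uniform length bounds coming from Lemma~\ref{theorem:2.2} and the energy, and then interpolate to obtain the full $W^{2,p}$ bound.

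First I would iterate the minimality inequality. As in \eqref{eq:3.2}, for each $i$,
\begin{equation*}
\mathcal{E}_p(\gamma_{i,n}) + \frac{\mathcal{L}(\gamma_{i-1,n})}{2\tau_n}\int_0^1 |\gamma_{i,n}-\gamma_{i-1,n}|^2\,dx \le \mathcal{E}_p(\gamma_{i-1,n}).
\end{equation*}
Since $\gamma_{i,n}-\gamma_{i-1,n}=\tau_n V_{i,n}$, summing from $i=1$ to $j$ telescopes to
\begin{equation*}
\mathcal{E}_p(\gamma_{j,n}) + \sum_{i=1}^{j} \frac{\tau_n\,\mathcal{L}(\gamma_{i-1,n})}{2}\int_0^1 |V_{i,n}|^2\,dx \le \mathcal{E}_p(\gamma_0).
\end{equation*}

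Next, since $\mathcal{E}_p(\gamma_{i,n})\le \mathcal{E}_p(\gamma_0)$ for every $i$, Lemma~\ref{theorem:2.2} applied to $\gamma_{i,n}$ furnishes a uniform lower bound
\begin{equation*}
\mathcal{L}(\gamma_{i,n}) \ge L_* := \Bigl[\frac{(2\pi)^p}{p\,\mathcal{E}_p(\gamma_0)}\Bigr]^{\frac{1}{p-1}} > 0,
\end{equation*}
which is independent of $i$ and $n$. Plugging this into the telescoped inequality at $j=n$ and recognising $\int_0^T\!\!\int_0^1 |V_n|^2\,dx\,dt = \sum_{i=1}^n \tau_n \int_0^1 |V_{i,n}|^2\,dx$ produces \eqref{eq:3.14} with $C_*=2\mathcal{E}_p(\gamma_0)/L_*$.

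For \eqref{eq:3.13}, the energy bound also gives $\lambda\mathcal{L}(\gamma_{i,n})\le \mathcal{E}_p(\gamma_0)$, so $\mathcal{L}(\gamma_{i,n})$ is uniformly bounded from above. Then exactly as in \eqref{eq:3.6}--\eqref{eq:3.7} I bound $\|\partial_x\gamma_{i,n}\|_{L^p}$ and $\|\partial_x^2\gamma_{i,n}\|_{L^p}$ uniformly via the constant-speed identity \eqref{eq:3.1}. It remains to bound $\|\gamma_{i,n}\|_{L^p}$. From $\gamma_{i,n}=\gamma_0+\tau_n\sum_{k=1}^{i}V_{k,n}$ and Cauchy--Schwarz combined with \eqref{eq:3.14},
\begin{equation*}
\|\gamma_{i,n}\|_{L^2(\mathcal{S}^1)} \le \|\gamma_0\|_{L^2(\mathcal{S}^1)} + \sum_{k=1}^{i}\tau_n\|V_{k,n}\|_{L^2(\mathcal{S}^1)} \le \|\gamma_0\|_{L^2(\mathcal{S}^1)} + (T C_*)^{1/2},
\end{equation*}
uniformly in $i$ and $n$. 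Interpolating via Proposition~\ref{theorem:2.1} (as in \eqref{eq:3.9}) between the $L^2$ bound and the $W^{2,p}$ bound on the second derivative yields a uniform $L^p$ bound on $\gamma_{i,n}$, completing \eqref{eq:3.13}.

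The only subtle step is the uniform lower bound on $\mathcal{L}(\gamma_{i-1,n})$: without it the discrete dissipation would be weighted by possibly degenerating factors, and one could not extract the $L^2$-in-time control of $V_n$. This is precisely where the $p$-elastic analogue of Fenchel's inequality encoded in Lemma~\ref{theorem:2.2} is indispensable, and it is the reason the assumption $p\ge 2$ (which makes Lemma~\ref{theorem:2.2} applicable) propagates here.
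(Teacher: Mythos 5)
Your proposal is correct and follows essentially the same route as the paper: the telescoped minimality inequality combined with the uniform lower length bound from Lemma~\ref{theorem:2.2} gives \eqref{eq:3.14}, and then the constant-speed identity \eqref{eq:3.1}, the $L^2$-in-time control of $V_n$ (your Cauchy--Schwarz step is the paper's Jensen/Fubini estimate \eqref{eq:3.19} in a different guise), and the interpolation \eqref{eq:3.9} give \eqref{eq:3.13}. No gaps.
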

\begin{proof}
First we prove \eqref{eq:3.14}. 
By the minimality of $\gamma_{i,n}$ we have 
\begin{align*}
G_{i,n}(\gamma_{i,n}) \le G_{i,n}(\gamma_{i-1,n}) = \mathcal{E}_p(\gamma_{i-1,n}).  
\end{align*}
This clearly implies that 
\begin{equation}
\label{eq:3.15}
P_n(\gamma_{i,n}, \gamma_{i-1,n}) \le \mathcal{E}_p(\gamma_{i-1,n}) - \mathcal{E}_p(\gamma_{i,n}). 
\end{equation}
Summing \eqref{eq:3.15} over $i=1, 2, \ldots, n$, we obtain 
\begin{equation}
\label{eq:3.16}
\sum^{n}_{i=1} P_n(\gamma_{i,n}, \gamma_{i-1,n}) 
  \le \mathcal{E}_p(\gamma_{0}) - \mathcal{E}_p(\gamma_{n,n}) \le \mathcal{E}_p(\gamma_{0}). 
\end{equation}
On the other hand, we observe from the definition of $P_n$ and Lemma \ref{theorem:2.2} that  
\begin{align*}
\sum^{n}_{i=1} P_n(\gamma_{i,n}, \gamma_{i-1,n}) 
 &= \sum^{n}_{i=1} \dfrac{\mathcal{L}(\gamma_{i-1,n})}{2 \tau_n} \int^1_0 |\gamma_{i,n} - \gamma_{i-1,n}|^2\, dx \\
 &= \dfrac{1}{2} \int^T_0 \mathcal{L}(\tilde{\Gamma}_{n}) \int^1_0 |V_{n}|^2\, dx dt \\
 &\ge \dfrac{1}{2}\Bigl[ \dfrac{(2 \pi)^{p}}{p \mathcal{E}_p(\gamma_0)} \Bigr]^{\frac{1}{p-1}} \int^T_0 \!\!\! \int^1_0 |V_{n}|^2\, dx dt. 
\end{align*}
This together with \eqref{eq:3.16} implies \eqref{eq:3.14}. 

We turn to \eqref{eq:3.13}. 
Since $\gamma_{i,n} \in \mathcal{AC}$, by \eqref{eq:3.6} and \eqref{eq:3.7} we have  
\begin{align}
& \int^1_0 |(\gamma_{i,n})_x|^p \, dx \le \Bigl( \dfrac{2}{\lambda} \mathcal{E}_p(\gamma_0) \Bigr)^p, \label{eq:3.17}\\
& \int^1_0 |(\gamma_{i,n})_{xx}|^p \, dx \le p \lambda \Bigl( \dfrac{2}{\lambda} \mathcal{E}_p(\gamma_0) \Bigr)^{2p}. \label{eq:3.18}
\end{align}
Since $\partial_t \gamma_n(x,t)= V_n(x,t)$ and $\gamma_n(x,i\tau_n)=\gamma_{i,n}(x)$, we deduce from \eqref{eq:3.14} that 
\begin{equation}
\label{eq:3.19}
\begin{aligned}
\int^1_0 |\gamma_{i,n} - \gamma_0|^2 \, dx 
 &= \int^1_0 \Bigl| \int^{i \tau_n}_0 \partial_t \gamma_n(x,t) \, dt \Bigr|^2 \, dx \\
 &\le i \tau_n \int^{i \tau_n}_0 \!\!\! \int^1_0 |V_n(x,t)|^2 \, dx dt
 \le T C_*, 
\end{aligned}
\end{equation}
where we used Jensen's inequality and Fubini's theorem. 
This together with \eqref{eq:3.9}, \eqref{eq:3.17} and \eqref{eq:3.18} implies \eqref{eq:3.13}.  
Therefore Lemma \ref{theorem:3.4} follows. 
\end{proof}

In order to study the extra regularity of approximate solutions, we make use of the Euler-Lagrange equation for $\{ \gamma_{i,n}\}$ in a weak sense. 
To this end, first we have:  
\begin{lemma} \label{theorem:3.5}
For $\gamma \in \mathcal{AC}$, $\eta \in W^{2,p}(\mathcal{S}^1; \mathbb{R}^2)$, $0 < \delta < \mathcal{L}(\gamma)/\| \eta_x \|_{L^\infty(\mathcal{S}^1)}$, 
there exists a unique $\Phi(\delta, \cdot) : [0,1] \to [0,1]$ such that 
\begin{equation}
\label{eq:3.20}
\mu(\delta,x) := (\gamma + \delta \eta)(\Phi(\delta,x))
\end{equation}
satisfies $\mu(\delta, \cdot) \in \mathcal{AC}$. 
Moreover, it holds that 
\begin{align}
&\Phi_\delta(\delta,x)|_{\delta=0} 
 = \dfrac{1}{\mathcal{L}(\gamma)^2}\Bigl( x \int^1_0 \gamma_x \cdot \eta_x \, dx - \int^x_0 \gamma_x \cdot \eta_x \, dx \Bigr), \label{eq:3.21} \\
&\Phi_x(\delta,x)|_{\delta=0} = 1. \label{eq:3.22} 
\end{align}
\end{lemma}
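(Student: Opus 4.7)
My plan is to impose the constant-speed constraint as a scalar implicit equation for $\Phi$, solve it by monotonicity, and then differentiate it to read off \eqref{eq:3.21} and \eqref{eq:3.22}. Writing $\tilde\gamma_\delta:=\gamma+\delta\eta$, since $\mu(\delta,\cdot)$ is merely a reparametrization of $\tilde\gamma_\delta$ one has $\mathcal{L}(\mu(\delta,\cdot))=\mathcal{L}(\tilde\gamma_\delta)$, and the requirement $|\mu_x(\delta,x)|\equiv\mathcal{L}(\mu(\delta,\cdot))$ translates to
\begin{equation*}
|\partial_y\tilde\gamma_\delta(\Phi(\delta,x))|\,\Phi_x(\delta,x)=\mathcal{L}(\tilde\gamma_\delta).
\end{equation*}
Paired with $\Phi(\delta,0)=0$, this separates and integrates to the equivalent implicit relation
\begin{equation*}
F(\delta,x,\Phi):=\int_0^\Phi|\partial_y\tilde\gamma_\delta(y)|\,dy-x\,\mathcal{L}(\tilde\gamma_\delta)=0.
\end{equation*}

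For existence and uniqueness I exploit the smallness hypothesis $\delta\|\eta_x\|_{L^\infty}<\mathcal{L}(\gamma)$, which together with $|\gamma_x|\equiv\mathcal{L}(\gamma)$ yields
\begin{equation*}
|\partial_y\tilde\gamma_\delta(y)|\ge\mathcal{L}(\gamma)-\delta\|\eta_x\|_{L^\infty}>0
\end{equation*}
uniformly in $y$. Hence $\Phi\mapsto\int_0^\Phi|\partial_y\tilde\gamma_\delta|\,dy$ is a strictly increasing homeomorphism of $[0,1]$ onto $[0,\mathcal{L}(\tilde\gamma_\delta)]$; since $x\mathcal{L}(\tilde\gamma_\delta)$ covers precisely this target interval as $x$ ranges over $[0,1]$, a unique $\Phi(\delta,x)\in[0,1]$ solves $F=0$, with $\Phi(\delta,0)=0$ and $\Phi(\delta,1)=1$, so that $\Phi(\delta,\cdot)$ is a bona fide reparametrization of the circle.

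To derive the derivative identities I first set $\delta=0$ in $F=0$: since $|\gamma_x|\equiv\mathcal{L}(\gamma)$, the relation collapses to $\mathcal{L}(\gamma)\Phi(0,x)=x\mathcal{L}(\gamma)$, whence $\Phi(0,x)=x$ and \eqref{eq:3.22} follows by $x$-differentiation. For \eqref{eq:3.21}, the lower bound on $\partial_\Phi F=|\partial_y\tilde\gamma_\delta(\Phi)|$ permits the implicit function theorem, giving $\delta\mapsto\Phi(\delta,x)$ of class $C^1$ near $\delta=0$. Differentiating $F(\delta,x,\Phi(\delta,x))=0$ in $\delta$ at $\delta=0$ and inserting
\begin{equation*}
\partial_\delta|\partial_y\tilde\gamma_\delta|\big|_{\delta=0}=\frac{\gamma_y\cdot\eta_y}{\mathcal{L}(\gamma)},\qquad \partial_\delta\mathcal{L}(\tilde\gamma_\delta)\big|_{\delta=0}=\frac{1}{\mathcal{L}(\gamma)}\int_0^1\gamma_y\cdot\eta_y\,dy,
\end{equation*}
produces
\begin{equation*}
\mathcal{L}(\gamma)\Phi_\delta(0,x)+\frac{1}{\mathcal{L}(\gamma)}\int_0^x\gamma_y\cdot\eta_y\,dy-\frac{x}{\mathcal{L}(\gamma)}\int_0^1\gamma_y\cdot\eta_y\,dy=0,
\end{equation*}
which rearranges to \eqref{eq:3.21}.

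The only real point of care is the differentiation under the integral sign, but the uniform positive lower bound on $|\partial_y\tilde\gamma_\delta|$ together with $\gamma,\eta\in W^{2,p}\hookrightarrow C^1(\mathcal{S}^1)$ makes the requisite dominated convergence immediate; I do not expect any substantial obstacle beyond this routine verification.
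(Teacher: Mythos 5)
Your proposal is correct and is essentially the paper's own argument in different notation: your implicit relation $F(\delta,x,\Phi)=0$ is exactly the statement $\Psi(\delta,\Phi)=x$ for the normalized arc-length map $\Psi(\delta,x'):=\mathcal{L}(\gamma+\delta\eta)^{-1}\int_0^{x'}|\gamma_x+\delta\eta_x|$, whose inverse the paper takes as $\Phi$, and your implicit differentiation reproduces the paper's relations $\Psi_{x'}\Phi_x=1$ and $\Psi_\delta+\Psi_{x'}\Phi_\delta=0$. The monotonicity argument from the lower bound $|\gamma_x+\delta\eta_x|\ge\mathcal{L}(\gamma)-\delta\|\eta_x\|_{L^\infty}>0$ and the resulting formulas \eqref{eq:3.21}--\eqref{eq:3.22} match the paper's proof.
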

\begin{proof}
We prove Lemma \ref{theorem:3.5} along the argument of the proof of \cite[Lemma 5]{Badal}. 
We define $\Psi(\delta,\cdot) : [0,1] \to \mathbb{R}$ as 
\begin{equation}
\label{eq:3.23}
\Psi(\delta,x') := \dfrac{1}{\mathcal{L}(\gamma + \delta \eta)} \int^{x'}_0 |\gamma_x + \delta \eta_x| \, d\tilde{x}. 
\end{equation}
For $0 < \delta < \mathcal{L}(\gamma)/\| \eta_x \|_{L^\infty(\mathcal{S}^1)}$ we have 
$$
|\gamma_x + \delta \eta_x| \ge |\gamma_x| - \delta \| \eta_x \|_{L^\infty(\mathcal{S}^1)} = \mathcal{L}(\gamma) - \delta \| \eta_x \|_{L^\infty(\mathcal{S}^1)} > 0. 
$$
This implies that $\Psi_{x'}>0$ for all $x' \in [0, 1]$. 
Since $\Psi(\delta,0)=0$ and $\Psi(\delta,1)=1$, we see that $\Psi(\delta,\cdot)$ is a diffeomorphism from $[0,1]$ to itself. 
Here we define $\Phi(\delta, \cdot) : [0,1] \to [0,1]$ as 
$$
\Phi(\delta,s):= \Psi(\delta,\cdot)^{-1}(s). 
$$
Let $\mu$ satisfy \eqref{eq:3.20}. 
By the definition we have $\Phi(\delta,0)=0$ and $\Phi(\delta,1)=1$. 
Hence, if $\eta \in W^{2,p}(\mathcal{S}^1;\mathbb{R}^2)$, it holds that 
\begin{equation}
\label{eq:3.24}
\mu(\delta,\cdot) \in W^{2,p}(\mathcal{S}^1; \mathbb{R}^2). 
\end{equation}
It follows from $\Psi(\delta, \Phi(\delta,x))=x$ that 
\begin{align}
& \Psi_{x'}(\delta,\Phi(\delta,x)) \Phi_x(\delta,x) = 1, \label{eq:3.25} \\
& \Psi_\delta(\delta, \Phi(\delta, x)) + \Psi_{x'}(\delta, \Phi(\delta,x)) \Phi_\delta(\delta,x) = 0. \label{eq:3.26}
\end{align}
Moreover, we deduce from \eqref{eq:3.23} that 
\begin{align}
& \Psi_{x'}(\delta,x') = \dfrac{|\gamma_x(x') + \delta \eta_x(x')|}{\mathcal{L}(\gamma + \delta \eta)}, \label{eq:3.27} \\
\label{eq:3.28}
\begin{split}
& \Psi_\delta(\delta,x') 
  = \dfrac{1}{\mathcal{L}(\gamma + \delta \eta)} \int^{x'}_0 \dfrac{\gamma_x + \delta \eta_x}{|\gamma_x + \delta \eta_x|} \cdot \eta_x \, d\tilde{x} \\
& \qquad \qquad \quad
    - \dfrac{1}{\mathcal{L}(\gamma + \delta \eta)^2} \int^1_0 \dfrac{\gamma_x + \delta \eta_x}{|\gamma_x + \delta \eta_x|} \cdot \eta_x \, d\tilde{x} 
          \int^{x'}_0 |\gamma_x + \delta \eta_x| \, d\tilde{x}. 
\end{split}
\end{align}
Plugging \eqref{eq:3.27} into \eqref{eq:3.25}, we have 
\begin{equation}
\label{eq:3.29}
\Phi_x(\delta,x)= \dfrac{\mathcal{L}(\gamma + \delta \eta)}{|(\gamma_x + \delta \eta_x)(\Phi(\delta,x))|}. 
\end{equation}
This together with $\Phi(0,x)=x$ implies \eqref{eq:3.22}. 
It follows from \eqref{eq:3.29} that 
\begin{align*}
(\mu(\delta,x))_x
 &= (\gamma_x + \delta \eta_x)(\Phi(\delta,x)) \Phi_x(\delta,x) \\
 &= (\gamma_x + \delta \eta_x)(\Phi(\delta,x)) \dfrac{\mathcal{L}(\gamma + \delta \eta)}{|(\gamma_x + \delta \eta_x)(\Phi(\delta,x))|}. 
\end{align*}
This implies that 
\begin{equation}
\label{eq:3.30}
|(\mu(\delta,x))_x|= \mathcal{L}(\gamma +\delta \eta).  
\end{equation}
Thus, it follows from \eqref{eq:3.24} and \eqref{eq:3.30} that $\mu(\delta,\cdot) \in \mathcal{AC}$. 
By \eqref{eq:3.26}, \eqref{eq:3.27} and \eqref{eq:3.28} we have 
\begin{align*}
\Phi_\delta(\delta,x)|_{\delta=0} = -\dfrac{\Psi_\delta(0, \Phi(0,x))}{\Psi_{x'}(0,\Phi(0,x))} 
 &= -\dfrac{\Psi_\delta(0,x)}{\Psi_{x'}(0,x)} 
 = -\Psi_\delta(0,x) \\
 &= \dfrac{1}{\mathcal{L}(\gamma)^2}\Bigl( x \int^1_0 \gamma_x \cdot \eta_x \, dx - \int^x_0 \gamma_x \cdot \eta_x \, dx \Bigr), 
\end{align*}
where we used $\Phi(0,x)=x$. Thus \eqref{eq:3.21} follows. 
Therefore Lemma \ref{theorem:3.5} follows. 
\end{proof}

From now on, we set 
\begin{equation}
\label{eq:3.31}
\Phi_1(\gamma,\eta):= \dfrac{1}{\mathcal{L}(\gamma)^2} \Bigl(x \int^1_0 \gamma_x \cdot \eta_x \, d\tilde{x} - \int^x_0 \gamma_x \cdot \eta_x \, d\tilde{x}\Bigr).
\end{equation}

\begin{lemma} \label{theorem:3.6}
Fix $\tilde{\gamma} \in \mathcal{AC}$ and let $\gamma$ be a minimizer of 
$$
\min_{\mu \in \mathcal{AC}} [ \mathcal{E}_p(\mu) + P_n(\mu, \tilde{\gamma})]. 
$$
Then it holds that 
\begin{align*}
&\int^1_0 \Bigl[ \dfrac{|\kappa|^{p-2}}{\mathcal{L}(\gamma)^3} \gamma_{xx} \cdot \eta_{xx} 
                 -\dfrac{2p-1}{p} \dfrac{|\kappa|^p}{\mathcal{L}(\gamma)} \gamma_x \cdot \eta_x + \dfrac{\lambda}{\mathcal{L}(\gamma)} \gamma_x \cdot \eta_x 
                  + \mathcal{L}(\tilde{\gamma}) \dfrac{\gamma - \tilde{\gamma}}{\tau_n} \cdot \eta \Bigr] \, dx \\
 & \qquad + \mathcal{L}(\tilde{\gamma}) \int^1_0 \dfrac{\gamma - \tilde{\gamma}}{\tau_n} \cdot \Phi_1(\gamma,\eta) \gamma_x \, dx =0
\end{align*}
for all $\eta \in W^{2,p}(\mathcal{S}^1;\mathbb{R}^2)$.
\end{lemma}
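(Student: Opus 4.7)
The plan is to take admissible variations via the reparametrisation constructed in Lemma~\ref{theorem:3.5} and exploit the reparametrisation invariance of $\mathcal{E}_p$. For fixed $\eta\in W^{2,p}(\mathcal{S}^1;\mathbb{R}^2)$ and sufficiently small $\delta>0$, set $\mu(\delta,x)=(\gamma+\delta\eta)(\Phi(\delta,x))$; then $\mu(\delta,\cdot)\in\mathcal{AC}$ by Lemma~\ref{theorem:3.5}. Since $\gamma$ minimises $G_{i,n}=\mathcal{E}_p+P_n(\cdot,\tilde{\gamma})$ over $\mathcal{AC}$, the one-sided derivative $\tfrac{d}{d\delta}[\mathcal{E}_p(\mu(\delta,\cdot))+P_n(\mu(\delta,\cdot),\tilde{\gamma})]|_{\delta=0^{+}}$ must be $\ge 0$; applying the same argument to $-\eta$ and using that the limit depends linearly on $\eta$ upgrades this to equality. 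The lemma will then follow by computing the two contributions separately.

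For the energy part, the crucial observation is that $\mathcal{E}_p$ is geometric: the change of variables $y=\Phi(\delta,x)$, together with the identity $|\mu_x|=\mathcal{L}(\gamma+\delta\eta)$ from~\eqref{eq:3.30} and the invariance of curvature under reparametrisation, yields $\mathcal{E}_p(\mu(\delta,\cdot))=\mathcal{E}_p(\gamma+\delta\eta)$. Hence I would compute $\tfrac{d}{d\delta}\mathcal{E}_p(\gamma+\delta\eta)|_{\delta=0}$ using the non-parametric expression $E_p(\nu)=\tfrac{1}{p}\int_0^1 |\nu_{xx}\cdot\mathcal{R}\nu_x|^{p}|\nu_x|^{1-3p}\,dx$ together with $\lambda\mathcal{L}(\nu)=\lambda\int_0^1|\nu_x|\,dx$, differentiating under the integral (which is justified for $p\ge 2$ and since $|\gamma_x+\delta\eta_x|$ is bounded away from $0$), and then evaluating at $\nu=\gamma\in\mathcal{AC}$, where $|\gamma_x|\equiv\mathcal{L}(\gamma)$, $\gamma_x\cdot\gamma_{xx}=0$, and $\gamma_{xx}=\mathcal{L}(\gamma)\kappa\mathcal{R}\gamma_x$ by~\eqref{eq:3.1}. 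The $\eta_{xx}\cdot\mathcal{R}\gamma_x$ contribution collapses into $\tfrac{|\kappa|^{p-2}}{\mathcal{L}(\gamma)^3}\gamma_{xx}\cdot\eta_{xx}$; the $\gamma_{xx}\cdot\mathcal{R}\eta_x$ contribution and the term $-(3p-1)|\kappa|^p\gamma_x\cdot\eta_x/\mathcal{L}(\gamma)$ coming from differentiating $|\nu_x|^{1-3p}$ combine, together with the length variation $\lambda\gamma_x\cdot\eta_x/\mathcal{L}(\gamma)$, to produce the coefficient $-(2p-1)/p$ in front of $|\kappa|^p\gamma_x\cdot\eta_x/\mathcal{L}(\gamma)$ as well as the $\lambda$-term.

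For the penalty part, the chain rule gives $\mu_\delta(0,x)=\eta(x)+\gamma_x(x)\Phi_\delta(0,x)=\eta(x)+\Phi_1(\gamma,\eta)\gamma_x(x)$ by~\eqref{eq:3.21}. Differentiating $P_n(\mu,\tilde{\gamma})=\tfrac{\mathcal{L}(\tilde{\gamma})}{2\tau_n}\int_0^1|\mu-\tilde{\gamma}|^2\,dx$ at $\delta=0$ therefore yields $\tfrac{\mathcal{L}(\tilde{\gamma})}{\tau_n}\int_0^1(\gamma-\tilde{\gamma})\cdot[\eta+\Phi_1(\gamma,\eta)\gamma_x]\,dx$, producing the remaining two terms of the claim: the standard $(\gamma-\tilde{\gamma})\cdot\eta$ pairing and the tangential correction $(\gamma-\tilde{\gamma})\cdot\Phi_1(\gamma,\eta)\gamma_x$ which encodes the constant-parametrisation constraint. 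Summing the two contributions and equating to zero yields the stated identity. The main work, rather than any real obstacle, is the algebraic bookkeeping in the second step: tracking cancellations among the various $\mathcal{L}(\gamma)$-powers and applying \eqref{eq:3.1} at the right moments; once the reparametrisation trick is in place, no further estimates are required.
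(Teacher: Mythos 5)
Your proposal is correct and follows essentially the same route as the paper: admissible variations $\mu(\delta,\cdot)=(\gamma+\delta\eta)\circ\Phi(\delta,\cdot)$ from Lemma~\ref{theorem:3.5}, the change of variables showing $E_p(\mu(\delta,\cdot))=E_p(\gamma+\delta\eta)$, differentiation of the non-parametric expression with the identities $\gamma_{xx}=\mathcal{L}(\gamma)\kappa\mathcal{R}\gamma_x$ and $|\gamma_x|\equiv\mathcal{L}(\gamma)$ producing the $-(2p-1)/p$ coefficient, and $\mu_\delta(0,\cdot)=\eta+\Phi_1(\gamma,\eta)\gamma_x$ for the penalty term. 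Your explicit remark on upgrading the one-sided inequality to an equality via $\pm\eta$ is a small point the paper leaves implicit, but otherwise the two arguments coincide.
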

\begin{proof}
First we derive the first variation of $E_p(\mu(\delta, \cdot))$. 
By \eqref{eq:3.29} we have  
\begin{align*}
E_p(\mu(\delta,\cdot)) 
&= \dfrac{1}{p} \int^1_0 \dfrac{|(\mu(\Phi(\delta,x)))_{xx} \cdot \mathcal{R} (\mu(\Phi(\delta,x)))_x|^p}{|(\mu(\Phi(\delta,x)))_x|^{3p-1}} \, dx \\
&= \dfrac{1}{p} \int^1_0 \dfrac{|(\gamma_{xx} + \delta \eta_{xx})(\Phi(\delta,x)) \cdot 
            \mathcal{R} (\gamma_x + \delta \eta_x)(\Phi(\delta,x))|^p}{| (\gamma_x + \delta \eta_x)(\Phi(\delta,x)) |^{3p-1}} \Phi_x(\delta,x) \, dx \\
&= \dfrac{1}{p} \int^1_0 \dfrac{|(\gamma_{xx} + \delta \eta_{xx})(\tilde{x}) \cdot 
            \mathcal{R} (\gamma_x + \delta \eta_x)(\tilde{x})|^p}{|(\gamma_x + \delta \eta_x)(\tilde{x})|^{3p-1}} \, d\tilde{x}. 
\end{align*}
Thus we obtain 
\begin{align}
\label{eq:3.32}
\begin{split}
\dfrac{d}{d \delta} E_p(\mu(\delta,\cdot)) \Bigm|_{\delta=0} 
 &= \int^1_0 \dfrac{|\gamma_{xx} \cdot \mathcal{R} \gamma_x|^{p-2}}{|\gamma_x|^{3p-1}}(\gamma_{xx}\cdot \mathcal{R}\gamma_x) 
            \{ \mathcal{R}\gamma_x \cdot \eta_{xx} + \gamma_{xx} \cdot \mathcal{R}\eta_x \} \, dx \\
 & \quad - \dfrac{3p-1}{p} \int^1_0 
     \dfrac{|(\gamma_{xx} ) \cdot \mathcal{R} (\gamma_x )|^p (\gamma_x \cdot \eta_x)}{|\gamma_x|^{3p+1}}\, dx.            
\end{split}
\end{align}
Since $\gamma_{xx}=\mathcal{L}(\gamma) \kappa \mathcal{R}\gamma_x$ and $|\gamma_x|=\mathcal{L}(\gamma)$, we have 
\begin{align*}
&\dfrac{|\gamma_{xx} \cdot \mathcal{R} \gamma_x|^{p-2}}{|\gamma_x|^{3p-1}}(\gamma_{xx}\cdot \mathcal{R}\gamma_x) 
            \{ \mathcal{R}\gamma_x \cdot \eta_{xx} + \gamma_{xx} \cdot \mathcal{R}\eta_x \} \\
& \quad = \dfrac{|\kappa|^{p-2} \kappa}{\mathcal{L}(\gamma)^2} \{ \mathcal{R}\gamma_x \cdot \eta_{xx} + \mathcal{L}(\gamma)\kappa \mathcal{R}\gamma_x \cdot \mathcal{R}\eta_x \}\\
& \quad = \dfrac{|\kappa|^{p-2}}{\mathcal{L}(\gamma)^3} \gamma_{xx} \cdot \eta_{xx}  
           + \dfrac{|\kappa|^{p}}{\mathcal{L}(\gamma)} \gamma_x \cdot \eta_x,  
\end{align*}
where the last equality followed from 
$$
\mathcal{R} \gamma_x \cdot \mathcal{R}\eta_x = - \mathcal{R}(\mathcal{R} \gamma_x) \cdot \eta_x = -(-\gamma_x)\cdot \eta_x = \gamma_x \cdot \eta_x. 
$$
Similarly, we have 
$$
- \dfrac{3p-1}{p} \dfrac{|(\gamma_{xx} ) \cdot \mathcal{R} (\gamma_x )|^p (\gamma_x \cdot \eta_x)}{|\gamma_x|^{3p+1}} 
 = - \dfrac{3p-1}{p} \dfrac{|\kappa|^p}{\mathcal{L}(\gamma)} \gamma_x \cdot \eta_x. 
$$
Hence \eqref{eq:3.32} is reduced into 
\begin{align}
\label{eq:3.33}
\begin{split}
\dfrac{d}{d \delta} E_p(\mu(\delta,\cdot)) \Bigm|_{\delta=0} 
 &= \int^1_0 \Bigl[ \dfrac{|\kappa|^{p-2}}{\mathcal{L}(\gamma)^3} \gamma_{xx} \cdot \eta_{xx} 
                    -\dfrac{2p-1}{p} \dfrac{|\kappa|^p}{\mathcal{L}(\gamma)} \gamma_x \cdot \eta_x \Bigr] \, dx.            
\end{split}
\end{align}

We turn to the first variation on $\mathcal{L}(\mu(\delta,\cdot))$. 
Since 
$$
\mathcal{L}(\mu(\delta,\cdot)) = \int^1_0 |(\mu(\delta,x))_x|\, dx = \mathcal{L}(\gamma + \delta \eta), 
$$
we have 
\begin{equation}
\label{eq:3.34}
\dfrac{d}{d \delta} \mathcal{L}(\mu(\delta,\cdot)) \Bigm|_{\delta=0} 
 = \int^1_0 \dfrac{\gamma_x \cdot \eta_x}{|\gamma_x|} \, dx 
 = \int^1_0 \dfrac{1}{\mathcal{L}(\gamma)} \gamma_x \cdot \eta_x \, dx. 
\end{equation}

Finally we derive the first variation of $P_n(\mu(\delta,\cdot))$. 
Since
$$
(\mu(\delta,x))_\delta= \gamma_x(\Phi(\delta,x)) \Phi_\delta(\delta,x) + \eta(\Phi(\delta,x)) + \delta \eta_x(\Phi(\delta,x)) \Phi_\delta(\delta,x) 
$$
and 
$$
\Phi_\delta(\delta,x)|_{\delta=0} = \Phi_1(\gamma,\eta), 
$$
we have 
\begin{equation}
\label{eq:3.35}
\dfrac{d}{d \delta} P_n(\mu(\delta,\cdot),\tilde{\gamma}) \Bigm|_{\delta=0} 
 = \mathcal{L}(\tilde{\gamma}) \int^1_0 \dfrac{\gamma-\tilde{\gamma}}{\tau_n} \cdot \{ \eta + \Phi_1(\gamma,\eta) \gamma_x \} \, dx.                            
\end{equation}
By \eqref{eq:3.33}, \eqref{eq:3.34} and \eqref{eq:3.35}, we complete the proof. 
\end{proof}

Here we adopt the idea used in \cite[Proposition~3.2]{DD} and \cite[Theorem~3.9]{DDG}:  
\begin{lemma} \label{theorem:3.7}
For $\psi \in C^\infty(\mathcal{S}^1;\mathbb{R}^2)$, we define $\varphi_1 : \mathcal{S}^1 \to \mathbb{R}^2$, $\varphi_2 : \mathcal{S}^1 \to \mathbb{R}^2$ by 
\begin{align*}
\varphi_1(x) &:= \int^x_0 \!\!\! \int^{\xi}_0 \psi(s) ds d\xi + x \alpha + x^2 \beta, \\ 
\varphi_2(x) &:= \int^x_0 \psi(\xi) \, d\xi + 2 x \beta, 
\end{align*}
where 
\begin{equation*}
\alpha := - \beta - \int^1_0 \!\!\! \int^{\xi}_0 \psi(s)\, ds d\xi, \qquad 
\beta := - \dfrac{1}{2} \int^{1}_{0} \psi(s)\, ds. 
\end{equation*}
Then $\varphi_1, \varphi_2 \in W^{2,p}(\mathcal{S}^1)$ and it holds that 
\begin{equation}
\label{eq:3.36}
\begin{aligned}
& \max\{ \| \varphi_1 \|_{C^1(\mathcal{S}^1)}, \| \varphi_2\|_{L^\infty(\mathcal{S}^1)},  |\alpha|, |\beta| \} \le \dfrac{7}{2} \| \psi \|_{L^1(\mathcal{S}^1)}, \\
& \| \varphi'_2\|_{L^r(\mathcal{S}^1)} \le 2 \| \psi \|_{L^r(\mathcal{S}^1)} \quad \text{for} \quad r \ge 1.  
\end{aligned}
\end{equation}
\end{lemma}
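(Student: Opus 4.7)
The content of Lemma~\ref{theorem:3.7} is essentially that the specific correctors $x\alpha + x^2\beta$ in $\varphi_1$ and $2x\beta$ in $\varphi_2$ are designed exactly to make the primitives of $\psi$ admissible as periodic test functions, and the estimates then follow by bare-hands integration. So my plan has two stages: first verify periodicity to secure $\varphi_i\in W^{2,p}(\mathcal{S}^1)$, then just bound each quantity appearing in \eqref{eq:3.36} using the triangle inequality.

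\emph{Step 1: periodicity.} I would check directly that $\varphi_i(0)=\varphi_i(1)$ and $\varphi_i'(0)=\varphi_i'(1)$ for $i=1,2$. For $\varphi_2$ this is immediate: $\varphi_2(1)=\int_0^1\psi+2\beta=0$ by the definition of $\beta$, and $\varphi_2'(x)=\psi(x)+2\beta$ is a sum of a periodic function and a constant, so $\varphi_2'$ is periodic. For $\varphi_1$, the condition $\varphi_1(1)=0$ becomes
\begin{equation*}
\int_0^1\!\!\int_0^\xi \psi(s)\,ds\,d\xi+\alpha+\beta=0,
\end{equation*}
which is exactly the defining relation for $\alpha$. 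The condition $\varphi_1'(1)=\varphi_1'(0)$ reduces to $\int_0^1\psi+2\beta=0$, which is the defining relation for $\beta$. Since $\varphi_1''=\psi+2\beta$ and $\varphi_2''=\psi'$, both second derivatives lie in $L^p$, giving $\varphi_1,\varphi_2\in W^{2,p}(\mathcal{S}^1)$.

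\emph{Step 2: pointwise estimates.} All bounds follow from the triangle inequality, systematically ordered so later estimates can reuse earlier ones. First, from the definition of $\beta$, $|\beta|\le\tfrac{1}{2}\|\psi\|_{L^1}$. Next, from the definition of $\alpha$ and the bound $\int_0^1\int_0^\xi|\psi(s)|\,ds\,d\xi\le\|\psi\|_{L^1}$, I get $|\alpha|\le|\beta|+\|\psi\|_{L^1}\le\tfrac{3}{2}\|\psi\|_{L^1}$. For $\varphi_1$, noting $|\varphi_1(x)|\le\|\psi\|_{L^1}+|\alpha|+|\beta|\le 3\|\psi\|_{L^1}$ and $|\varphi_1'(x)|\le\|\psi\|_{L^1}+|\alpha|+2|\beta|\le\tfrac{7}{2}\|\psi\|_{L^1}$, so $\|\varphi_1\|_{C^1(\mathcal{S}^1)}\le\tfrac{7}{2}\|\psi\|_{L^1}$ (assuming the $C^1$-norm is $\max\{\|\varphi_1\|_{L^\infty},\|\varphi_1'\|_{L^\infty}\}$; otherwise the sum is still $\le\tfrac{13}{2}\|\psi\|_{L^1}$, but the order of the bound is the content). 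For $\varphi_2$, similarly $\|\varphi_2\|_{L^\infty}\le\|\psi\|_{L^1}+2|\beta|\le 2\|\psi\|_{L^1}\le\tfrac{7}{2}\|\psi\|_{L^1}$.

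\emph{Step 3: the $L^r$ estimate.} Since $\varphi_2'(x)=\psi(x)+2\beta$, the triangle inequality in $L^r(\mathcal{S}^1)$, together with the fact that $\mathcal{S}^1$ has unit measure, yields
\begin{equation*}
\|\varphi_2'\|_{L^r(\mathcal{S}^1)}\le\|\psi\|_{L^r(\mathcal{S}^1)}+2|\beta|\le\|\psi\|_{L^r(\mathcal{S}^1)}+\|\psi\|_{L^1(\mathcal{S}^1)}\le 2\|\psi\|_{L^r(\mathcal{S}^1)},
\end{equation*}
using $\|\psi\|_{L^1}\le\|\psi\|_{L^r}$ for $r\ge 1$ on a unit-measure domain.

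There is no real obstacle here: the whole lemma is a careful bookkeeping exercise, and the only thing that could go wrong is a mismatched constant. The key conceptual point, which I would emphasise in the write-up, is that the polynomial correctors $\alpha x+\beta x^2$ and $2\beta x$ are \emph{uniquely} determined by the requirements that both $\varphi_i$ and $\varphi_i'$ be periodic; once this is observed the estimates are mechanical.
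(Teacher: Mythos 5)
Your proposal is correct and follows essentially the same route as the paper: the same triangle-inequality bookkeeping with the same constants ($|\beta|\le\tfrac12\|\psi\|_{L^1}$, $|\alpha|\le\tfrac32\|\psi\|_{L^1}$, etc.), with the only difference being that you spell out the periodicity check that the paper dispatches with ``by the definition \dots\ we see that $\varphi_1,\varphi_2\in W^{2,p}(\mathcal{S}^1)$.'' The paper does use the max convention for the $C^1$-norm, so your main bound matches exactly.
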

\begin{proof}
By the definition of $\varphi_1$, $\varphi_2$, $\alpha$ and $\beta$,  we see that $\varphi_1, \varphi_2 \in W^{2,p}(\mathcal{S}^1)$. 
Thus it suffices to prove estimate \eqref{eq:3.36}. 
To begin with, we have 
\begin{align*}
|\alpha| \le \dfrac{3}{2} \| \psi \|_{L^1(\mathcal{S}^1)}, \qquad 
|\beta| \le \dfrac{1}{2}\| \psi \|_{L^1(\mathcal{S}^1)}. 
\end{align*}
This clearly implies that 
\begin{align*}
\|\varphi_1 \|_{L^\infty(\mathcal{S}^1)} \le 3 \|\psi \|_{L^1(\mathcal{S}^1)}, \qquad 
\|\varphi_2 \|_{L^\infty(\mathcal{S}^1)} \le 2 \|\psi \|_{L^1(\mathcal{S}^1)}. 
\end{align*}
Moreover, since 
\begin{align*}
\varphi'_1(x)= \int^x_0 \psi(s) \, ds + \alpha + 2 \beta, 
\end{align*}
we also obtain 
\begin{align*}
\| \varphi'_1 \|_{L^\infty(\mathcal{S}^1)} 
 \le \| \psi \|_{L^1(\mathcal{S}^1)} + |\alpha| + 2 |\beta| 
 \le \dfrac{7}{2} \| \psi\|_{L^1(\mathcal{S}^1)}. 
\end{align*}
By $\varphi'_2(x)= \psi(x) + 2 \beta$, similarly we obtain 
\begin{align*}
\|\varphi'_2 \|_{L^r(\mathcal{S}^1)} \le \| \psi\|_{L^r(\mathcal{S}^1)} + 2 |\beta| 
\le \| \psi\|_{L^r(\mathcal{S}^1)} + \| \psi\|_{L^1(\mathcal{S}^1)}
\le 2 \| \psi\|_{L^r(\mathcal{S}^1)}.  
\end{align*}
Therefore Lemma \ref{theorem:3.7} follows. 
\end{proof}

Thanks to Lemma \ref{theorem:3.7}, we have: 
\begin{lemma} \label{theorem:3.8}
Let $\tilde{\gamma}_{n}$ be the piecewise constant interpolation of $\{ \gamma_{i,n} \}$. 
Then there exists a constant $C>0$ being independent of $n$ such that  
\begin{align*}
& \int^T_0 \| \partial^2_x \tilde{\gamma}_n \|^{2(p-1)}_{L^\infty(\mathcal{S}^1)} \, dt \le C(T+1), \\
& \int^T_0 \| \partial_x (|\partial^2_x \tilde{\gamma}_{n}|^{p-2} \partial^2_x \tilde{\gamma}_{n}) \|_{L^2(\mathcal{S}^1)}^{2} \, dt \le C(T + 1), 
\end{align*}
for all $n \in \mathbb{N}$ and $i=1, 2, \ldots, n$. 
\end{lemma}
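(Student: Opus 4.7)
The plan is to establish both estimates pointwise in $t$ (that is, for each minimiser $\gamma=\gamma_{i,n}$) by duality applied to the Euler--Lagrange equation of Lemma~\ref{theorem:3.6}, and then to integrate in $t$ using~\eqref{eq:3.14}. Throughout write $w=|\partial_x^2\gamma|^{p-2}\partial_x^2\gamma$ and $Q=w/\mathcal{L}(\gamma)^{2p-1}$, so $Q$ is precisely the coefficient of $\partial_x^2\eta$ in the integrand of Lemma~\ref{theorem:3.6}. Let $R$ denote the coefficient of $\partial_x\eta$ in that same integrand; since $|\partial_x\gamma|=\mathcal{L}(\gamma)$ and $\mathcal{L}(\gamma)$ is bounded both above (by the energy) and below (by Lemma~\ref{theorem:2.2}), one has $|R|\le C|\partial_x^2\gamma|^p+C$ with $C$ depending only on $p$, $\lambda$, and $\mathcal{E}_p(\gamma_0)$.

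For the first estimate, I fix a smooth $\psi:\mathcal{S}^1\to\mathbb{R}^2$ and apply Lemma~\ref{theorem:3.7} to obtain $\varphi_1\in W^{2,p}$ with $\varphi_1''=\psi+2\beta$ and $\max\{\|\varphi_1\|_{L^\infty},\|\varphi_1'\|_{L^\infty}\}\le C\|\psi\|_{L^1}$ by~\eqref{eq:3.36}. Testing Lemma~\ref{theorem:3.6} with $\eta=\varphi_1$, each lower-order term is controlled by $\|\psi\|_{L^1}$ times a factor bounded by the energy and $\|V_{i,n}\|_{L^2}$: the estimate $\|R\|_{L^1}\le C$ follows from~\eqref{eq:3.13}, the pointwise bound $|\Phi_1(\gamma,\varphi_1)|\le C\|\varphi_1'\|_{L^\infty}/\mathcal{L}(\gamma)$ is immediate from~\eqref{eq:3.31}, and $\|V_{i,n}\|_{L^1}\le\|V_{i,n}\|_{L^2}$ on $\mathcal{S}^1$. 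Rearranging yields
\begin{equation*}
\Big|\int_0^1 Q\cdot\psi\,dx\Big|\le C\bigl(1+\|V_{i,n}\|_{L^2}\bigr)\|\psi\|_{L^1}
\end{equation*}
for every smooth $\psi$. Density of smooth functions in $L^1$ and the duality $(L^1)^*=L^\infty$ then give $\|Q\|_{L^\infty}\le C(1+\|V_{i,n}\|_{L^2})$. Since $|w|=|\partial_x^2\gamma|^{p-1}$ and $\mathcal{L}(\gamma)^{2p-1}$ is bounded above, $\|\partial_x^2\gamma\|_{L^\infty}^{p-1}\le C(1+\|V_{i,n}\|_{L^2})$; squaring gives $\|\partial_x^2\gamma\|_{L^\infty}^{2(p-1)}\le C(1+\|V_{i,n}\|_{L^2}^2)$, and integrating in $t$ via~\eqref{eq:3.14} completes the first estimate.

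For the second estimate, given a smooth $\psi$, set $\eta(x)=\int_0^x\bigl(\psi(\xi)-\int_0^1\psi\bigr)\,d\xi$. Then $\eta\in C^\infty(\mathcal{S}^1)$ is periodic with $\eta_{xx}=\psi_x$ and $\|\eta\|_{L^\infty},\|\eta_x\|_{L^2}\le C\|\psi\|_{L^2}$. Testing Lemma~\ref{theorem:3.6} with this $\eta$, the leading term equals $\int Q\cdot\psi_x\,dx=-\langle\partial_x Q,\psi\rangle$. The essential new input is the bound on $\|R\|_{L^2}$, which uses the first estimate together with~\eqref{eq:3.13}:
\begin{equation*}
\|R\|_{L^2}^2\le C\|\partial_x^2\gamma\|_{L^\infty}^p\,\|\partial_x^2\gamma\|_{L^p}^p+C\le C(1+\|V_{i,n}\|_{L^2})^{p/(p-1)}\le C(1+\|V_{i,n}\|_{L^2}^2),
\end{equation*}
the last inequality using $p\ge 2$, hence $p/(p-1)\le 2$. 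All remaining $V$-terms are bounded by $C\|V_{i,n}\|_{L^2}\|\psi\|_{L^2}$ (including the $\Phi_1$-term, via $|\Phi_1|\le C\|\eta_x\|_{L^1}/\mathcal{L}(\gamma)\le C\|\psi\|_{L^2}$). Combining yields $|\langle\partial_x Q,\psi\rangle|\le C(1+\|V_{i,n}\|_{L^2})\|\psi\|_{L^2}$ for every smooth $\psi$, so by $L^2$-duality $\|\partial_x Q\|_{L^2}\le C(1+\|V_{i,n}\|_{L^2})$. Multiplying by $\mathcal{L}(\gamma)^{2p-1}$ gives the corresponding bound on $\|\partial_x w\|_{L^2}$, and squaring and integrating in $t$ completes the second estimate.

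The main obstacle is the bound on $\|R\|_{L^2}$ required for the second estimate: the energy alone only controls $\partial_x^2\gamma$ in $L^p$, whereas naively $\|R\|_{L^2}$ demands $L^{2p}$ control. This is why the first estimate must be proved first and then used as input, and also why the hypothesis $p\ge 2$ is essential---it ensures $p/(p-1)\le 2$, so the composite exponent on $(1+\|V_{i,n}\|_{L^2})$ does not exceed $2$ and the resulting pointwise bound remains integrable over $(0,T)$ via~\eqref{eq:3.14}.
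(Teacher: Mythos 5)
Your proposal is correct and follows essentially the same route as the paper: test the Euler--Lagrange identity of Lemma~\ref{theorem:3.6} with the two test functions of Lemma~\ref{theorem:3.7} (your $\eta$ in the second step is exactly $\varphi_2$), use $L^1$--$L^\infty$ and $L^2$--$L^2$ duality to get the pointwise-in-time bounds $C(1+\|V_{i,n}\|_{L^2})$, and integrate via~\eqref{eq:3.14}. The only cosmetic difference is in bounding the $|\partial_x^2\gamma|^p$ coefficient for the second estimate, where you split off $\|\partial_x^2\gamma\|_{L^\infty}^{p}$ and invoke $p/(p-1)\le 2$, while the paper splits off $\|\partial_x^2\gamma\|_{L^\infty}^{p-1}\|\partial_x^2\gamma\|_{L^2}$; both are valid for $p\ge 2$.
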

\begin{proof}
By Lemma \ref{theorem:3.6} we have 
\begin{equation}
\label{eq:3.37}
\begin{aligned}
&\int^1_0 \dfrac{|\kappa_{i,n}|^{p-2}}{\mathcal{L}(\gamma_{i,n})^3} \partial^2_x \gamma_{i,n} \cdot \eta_{xx} \, dx 
 - \dfrac{2p-1}{p} \int^1_0 \dfrac{|\kappa_{i,n}|^p}{\mathcal{L}(\gamma_{i,n})} \partial_x \gamma_{i,n} \cdot \eta_x \, dx \\
& \qquad + \int^1_0 \dfrac{\lambda}{\mathcal{L}(\gamma_{i,n})} \partial_x \gamma_{i,n} \cdot \eta_x \, dx 
 + \mathcal{L}(\gamma_{i-1,j}) \int^1_0 V_{i,n} \cdot \eta \, dx \\
& \qquad + \mathcal{L}(\gamma_{i-1,j}) \int^1_0 V_{i,n} \cdot \Phi_1(\gamma_{i,n}, \eta) \partial_x \gamma_{i,n} \, dx = 0 
\end{aligned}
\end{equation}
for all $\eta \in W^{2,p}(\mathcal{S}^1;\mathbb{R}^2)$. 
Fix $\psi \in C^\infty(\mathcal{S}^1; \mathbb{R}^2)$ arbitrarily and define $\varphi_1 : \mathcal{S}^1 \to \mathbb{R}^2$ as in Lemma~\ref{theorem:3.7}. 
We take $\varphi_1$ as $\eta$ in \eqref{eq:3.37}. 
First we have 
\begin{align*}
&\int^1_0 \dfrac{|\kappa_{i,n}|^{p-2}}{\mathcal{L}(\gamma_{i,n})^3} \partial^2_x \gamma_{i,n} \cdot \partial^2_x \varphi_{1} \, dx \\
& \qquad = \int^1_0 \dfrac{|\kappa_{i,n}|^{p-2}}{\mathcal{L}(\gamma_{i,n})^3} \partial^2_x \gamma_{i,n} \cdot \psi \, dx 
    + 2 \int^1_0 \dfrac{|\kappa_{i,n}|^{p-2}}{\mathcal{L}(\gamma_{i,n})^3} \partial^2_x \gamma_{i,n} \cdot \beta \, dx. 
\end{align*}
Since $\partial^2_x \gamma_{i,n}=\mathcal{L}(\gamma_{i,n}) \kappa_{i,n} \mathcal{R} \partial_x \gamma_{i,n}$, it follows from Lemma \ref{theorem:3.7} that 
\begin{align*}
\Bigl|  \int^1_0 \dfrac{|\kappa_{i,n}|^{p-2}}{\mathcal{L}(\gamma_{i,n})^3} \partial^2_x \gamma_{i,n} \cdot \beta \, dx \Bigr| 
& \le |\beta| \int^1_0 \dfrac{|\kappa_{i,n}|^{p-1}}{\mathcal{L}(\gamma_{i,n})} \, dx \\
& \le C \| \psi \|_{L^1(\mathcal{S}^1)} E_p(\gamma_{i,n})^{(p-1)/p} 
   \le C \| \psi \|_{L^1(\mathcal{S}^1)}. 
\end{align*}
Similarly we have 
\begin{align*}
& \Bigl| \int^1_0 \dfrac{|\kappa_{i,n}|^p}{\mathcal{L}(\gamma_{i,n})} \partial_x \gamma_{i,n} \cdot \partial_x \varphi_1 \, dx \Bigr| 
 \le C \| \psi \|_{L^1(\mathcal{S}^1)} E_p(\gamma_{i,n}) 
 \le C \| \psi \|_{L^1(\mathcal{S}^1)}, \\
& \Bigl| \lambda \int^1_0 \dfrac{\partial_x \gamma_{i,n}}{\mathcal{L}(\gamma_{i,n})} \cdot \partial_x \varphi_1 \, dx \Bigr| 
 \le C \| \psi \|_{L^1(\mathcal{S}^1)}, \\
& \Bigl| \mathcal{L}(\gamma_{i-1,n}) 
   \int^1_0 V_{i,n} \cdot \varphi_1 \, dx \Bigr| 
 \le C \| \psi \|_{L^1(\mathcal{S}^1)} \| V_{i,n} \|_{L^1(\mathcal{S}^1)}. 
\end{align*}
Since 
\begin{align*}
\| \Phi_1(\gamma_{i,n}, \varphi_1) \|_{L^\infty(\mathcal{S}^1)} \le  C \| \psi \|_{L^1(\mathcal{S}^1)}, 
\end{align*}
we also find 
\begin{equation*}
\Bigl| \mathcal{L}(\gamma_{i-1,j}) \int^1_0 V_{i,n} \cdot \Phi_1(\gamma_{i,n}, \eta) \partial_x \gamma_{i,n} \, dx \Bigr| 
 \le C \| \psi \|_{L^1(\mathcal{S}^1)} \| V_{i,n} \|_{L^1(\mathcal{S}^1)}. 
\end{equation*}
Thus we obtain 
\begin{equation} \label{eq:3.38}
\Bigl| \int^1_0 \dfrac{|\kappa_{i,n}|^{p-2}}{\mathcal{L}(\gamma_{i,n})^3} \partial^2_x \gamma_{i,n} \cdot \psi \, dx \Bigr| 
 \le C (1+\| V_{i,n} \|_{L^2(\mathcal{S}^1)}) \| \psi \|_{L^1(\mathcal{S}^1)}
\end{equation}
for all $\psi \in C^\infty(\mathcal{S}^1;\mathbb{R}^2)$. 
By way of a density argument, \eqref{eq:3.38} also holds for all $\psi \in L^1(\mathcal{S}^1)$. 
Then it follows from \eqref{eq:3.38} that 
\begin{equation} \label{eq:3.39}
\| |\kappa_{i,n}|^{p-2} \partial^2_x \gamma_{i,n} \|_{L^\infty(\mathcal{S}^1)} 
 \le C (1+\| V_{i,n} \|_{L^2(\mathcal{S}^1)}). 
\end{equation} 
Since 
\begin{equation}
\label{eq:3.40}
|\partial^2_x \gamma_{i,n}| = \mathcal{L}(\gamma_{i,n}) |\kappa_{i,n}| |\mathcal{R} \partial_x \gamma_{i,n}|
 = \mathcal{L}(\gamma_{i,n})^2 |\kappa_{i,n}|, 
\end{equation}
we reduce \eqref{eq:3.39} into 
\begin{equation}
\label{eq:3.41}
\| \partial^2_x \gamma_{i,n} \|_{L^\infty(\mathcal{S}^1)}^{p-1} 
 \le C (1+\| V_{i,n} \|_{L^2(\mathcal{S}^1)}).
\end{equation}
This together with Lemma \ref{theorem:3.4} implies that 
$$
\int^T_0 \| \partial^2_x \tilde{\gamma}_n \|^{2(p-1)}_{L^\infty(\mathcal{S}^1)} \, dt 
\le C \int^T_0 \bigl[ 1 + \| V_n \|^2_{L^2(\mathcal{S}^1)} \bigr]  \, dt 
\le C(T+1). 
$$

For $\psi \in C^\infty(\mathcal{S}^1; \mathbb{R}^2)$ we define $\varphi_2 : \mathcal{S}^1 \to \mathbb{R}^2$ as in Lemma \ref{theorem:3.7}. 
We take $\varphi_2$ as $\eta$ in \eqref{eq:3.37}. 
First we have 
\begin{align*}
\int^1_0 \dfrac{|\kappa_{i,n}|^{p-2}}{\mathcal{L}(\gamma_{i,n})^3} \partial^2_x \gamma_{i,n} \cdot \partial^2_x \varphi_{2} \, dx 
 = \int^1_0 \dfrac{|\kappa_{i,n}|^{p-2}}{\mathcal{L}(\gamma_{i,n})^3} \partial^2_x \gamma_{i,n} \cdot \partial_x \psi \, dx. 
\end{align*}
By \eqref{eq:3.40} and \eqref{eq:3.41} we have 
\begin{align*}
\Bigl| \int^1_0 \dfrac{|\kappa_{i,n}|^p}{\mathcal{L}(\gamma_{i,n})} \partial_x \gamma_{i,n} \cdot \partial_x \varphi_1 \, dx \Bigr| 
 &\le C \| \partial^2_x \gamma_{i,n} \|^{p-1}_{L^\infty(\mathcal{S}^1)} \| \partial^2_x \gamma_{i,n} \|_{L^2(\mathcal{S}^1)} \| \psi \|_{L^2(\mathcal{S}^1)} \\
 &\le C (1+\| V_{i,n} \|_{L^2(\mathcal{S}^1)}) \| \psi \|_{L^2(\mathcal{S}^1)}. 
\end{align*}
Along the same line as above, we have 
\begin{align*}
& \Bigl| \lambda \int^1_0 \dfrac{\partial_x \gamma_{i,n}}{\mathcal{L}(\gamma_{i,n})} \cdot \partial_x \varphi_2 \, dx \Bigr| 
 \le C \| \psi \|_{L^1(\mathcal{S}^1)}, \\
& \Bigl| \mathcal{L}(\gamma_{i-1,n}) \int^1_0 V_{i,n} \cdot \varphi_2 \, dx \Bigr| 
 \le C \| \psi \|_{L^1(\mathcal{S}^1)} \| V_{i,n} \|_{L^1(\mathcal{S}^1)}. 
\end{align*}
Moreover, by 
\begin{equation*}
\| \Phi_1(\gamma_{i,n}, \varphi_2) \|_{L^\infty(\mathcal{S}^1)} \le  C \| \psi \|_{L^1(\mathcal{S}^1)}, 
\end{equation*}
we find 
\begin{align*}
\Bigl| \mathcal{L}(\gamma_{i-1,j}) \int^1_0 V_{i,n} \cdot \Phi_1(\gamma_{i,n}, \eta) \partial_x \gamma_{i,n} \, dx \Bigr| 
 \le C \| V_{i,n} \|_{L^1(\mathcal{S}^1)}. 
\end{align*}
Thus we see that  
\begin{align*}
\Bigl| \int^1_0 \dfrac{|\kappa_{i,n}|^{p-2}}{\mathcal{L}(\gamma_{i,n})^3} \partial^2_x \gamma_{i,n} \cdot \partial_x \psi \, dx \Bigr| 
 \le C(1 + \| V_{i,n} \|_{L^2(\mathcal{S}^1)} ) \| \psi \|_{L^2(\mathcal{S}^1)}.  
\end{align*}
This together with Riesz's representation theorem implies that 
\begin{equation}
\label{eq:3.42}
\| \partial_x (|\partial^2_x \gamma_{i,n}|^{p-2} \partial^2_x \gamma_{i,n}) \|_{L^2(\mathcal{S}^1)} \le C(1 + \| V_{i,n} \|_{L^2(\mathcal{S}^1)}). 
\end{equation}
Combining \eqref{eq:3.42} with Lemma \ref{theorem:3.4}, we observe that 
$$
\int^T_0 \| \partial_x (|\partial^2_x \tilde{\gamma}_{n}|^{p-2} \partial^2_x \tilde{\gamma}_{n}) \|_{L^2(\mathcal{S}^1)}^{2} \, dt \le C(T + 1). 
$$
Therefore Lemme \ref{theorem:3.8} follows. 
\end{proof}

\subsection{Convergence} \label{subsection:3.2}

\begin{lemma} \label{theorem:3.9}
Let $\gamma_n$ be the piecewise linear interpolations of the family of planar closed curves $\{ \gamma_{i,n}\}$ obtained by Lemma~{\rm \ref{theorem:3.1}}. 
Then there exists a family of planar closed curves $\gamma : \mathcal{S}^1 \times [0,T] \to \mathbb{R}^2$ such that 
\begin{align}
& \gamma_n \rightharpoonup \gamma \quad \text{weakly$^*$ in} \quad L^\infty(0,T;W^{2,p}(\mathcal{S}^1)), \label{eq:3.43} \\
& \gamma_n \rightharpoonup \gamma \quad \text{in} \quad H^1(0,T;L^2(\mathcal{S}^1)), \label{eq:3.44} 
\end{align}
up to a subsequence. 
\end{lemma}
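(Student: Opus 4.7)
The plan is to derive uniform bounds on the piecewise linear interpolations $\gamma_n$ simultaneously in $L^\infty(0,T;W^{2,p}(\mathcal{S}^1))$ and in $H^1(0,T;L^2(\mathcal{S}^1))$ directly from Lemma~\ref{theorem:3.4}, then extract a subsequence converging in both weak topologies, and finally identify the two limits.

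First I would note that for $t \in [(i-1)\tau_n, i\tau_n]$, Definition~\ref{theorem:3.2} expresses $\gamma_n(\cdot,t)$ as the convex combination
\begin{equation*}
\gamma_n(\cdot,t) = (1-\sigma)\gamma_{i-1,n} + \sigma \gamma_{i,n}, \qquad \sigma := \frac{t - (i-1)\tau_n}{\tau_n} \in [0,1].
\end{equation*}
The triangle inequality together with estimate \eqref{eq:3.13} from Lemma~\ref{theorem:3.4} then gives $\| \gamma_n(\cdot,t) \|_{W^{2,p}(\mathcal{S}^1)} \le C^*$ uniformly in $t \in [0,T]$ and in $n$. Consequently $\{\gamma_n\}$ is bounded in $L^\infty(0,T;W^{2,p}(\mathcal{S}^1))$.

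Next, by construction $\partial_t \gamma_n(x,t) = V_n(x,t)$ for a.e. $(x,t) \in \mathcal{S}^1 \times (0,T)$, so estimate \eqref{eq:3.14} from Lemma~\ref{theorem:3.4} gives
\begin{equation*}
\| \partial_t \gamma_n \|^2_{L^2(0,T;L^2(\mathcal{S}^1))} = \int_0^T\!\!\int_0^1 |V_n|^2\,dx\,dt \le C_*,
\end{equation*}
uniformly in $n$. Combined with the $L^\infty(0,T;L^2(\mathcal{S}^1))$ bound that follows a fortiori from the $W^{2,p}$ bound above, this yields uniform boundedness of $\{\gamma_n\}$ in $H^1(0,T;L^2(\mathcal{S}^1))$.

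With both bounds in hand, I would apply the Banach--Alaoglu theorem: since $W^{2,p}(\mathcal{S}^1)$ is separable and reflexive for $1 < p < \infty$, $L^\infty(0,T;W^{2,p}(\mathcal{S}^1))$ is the dual of the separable space $L^1(0,T;(W^{2,p}(\mathcal{S}^1))^*)$, so we may extract a subsequence (still denoted $\gamma_n$) and a limit $\gamma \in L^\infty(0,T;W^{2,p}(\mathcal{S}^1))$ such that \eqref{eq:3.43} holds. Passing to a further subsequence and using weak compactness in the reflexive Hilbert space $H^1(0,T;L^2(\mathcal{S}^1))$ produces a weak limit $\tilde{\gamma}$ in that space. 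To identify the two limits, test both convergences against any $\varphi \in C_c^\infty(\mathcal{S}^1 \times (0,T);\mathbb{R}^2)$: both imply $\int_0^T\!\!\int_0^1 \gamma_n \cdot \varphi\,dx\,dt$ converges to the corresponding integral against $\gamma$ and $\tilde{\gamma}$ respectively, whence $\gamma = \tilde{\gamma}$ a.e. by uniqueness of distributional limits. The periodicity of $\gamma(\cdot,t)$ in $x$ passes to the weak limit, so $\gamma$ is a family of closed curves. There is no serious obstacle here, as all the analytic work is already encapsulated in Lemma~\ref{theorem:3.4}; the only subtle point is ensuring the two weak topologies select a common subsequential limit, handled by the distributional identification above.
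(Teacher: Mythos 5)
Your argument is correct and is essentially the same as the paper's: the paper simply invokes Lemma~\ref{theorem:3.4} and refers to \cite[Theorem 4.1]{OY} for the standard compactness argument, whose details (the convex-combination bound on $\gamma_n(\cdot,t)$, the identity $\partial_t\gamma_n = V_n$, Banach--Alaoglu plus weak compactness, and distributional identification of the two limits) you have written out explicitly.
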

\begin{proof}
By Lemma \ref{theorem:3.4} we have \eqref{eq:3.43} and \eqref{eq:3.44} along the same argument as in \cite[Theorem 4.1]{OY}. 
\end{proof}

Similarly to the proof of \cite[Theorem 4.2]{NO}, we have: 
\begin{lemma} \label{theorem:3.10}
Let $\gamma_n$ be the piecewise linear interpolations of the family of planar closed curves $\{ \gamma_{i,n}\}$ obtained by Lemma~{\rm \ref{theorem:3.1}}. 
Then 
\begin{equation}
\gamma_n \to \gamma \quad \text{in} \quad C^{0,\beta}([0,T]; C^{1,\alpha}(\mathcal{S}^1)) \label{eq:3.45}
\end{equation}
with $0 < \alpha < 1-1/p$ and $\beta=\frac{(1-\alpha)p-1}{8(p-1)}$, 
where $\gamma$ is the limit obtained by Lemma~\ref{theorem:3.9}.
\end{lemma}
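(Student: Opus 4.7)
\medskip

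The plan is to establish a uniform bound for $\gamma_n$ in $C^{0,\beta}([0,T]; C^{1,\alpha}(\mathcal{S}^1))$ and then extract a convergent subsequence via Ascoli--Arzel\`a, following the structure of \cite[Theorem 4.2]{NO}. The uniform bound is obtained by combining the uniform spatial $W^{2,p}$ control from Lemma~\ref{theorem:3.4} with the time $L^2$-Hölder continuity inherited from the velocity bound.

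First I would prove time Hölder continuity of $\gamma_n$ in $L^2$ with exponent $1/2$. Since $\gamma_n$ is piecewise linear in $t$ with $\partial_t \gamma_n = V_n$ on each subinterval, Cauchy--Schwarz and \eqref{eq:3.14} give
\begin{equation*}
\| \gamma_n(\cdot, t_2) - \gamma_n(\cdot, t_1) \|_{L^2(\mathcal{S}^1)} \le |t_2 - t_1|^{1/2} \| V_n \|_{L^2(0,T; L^2(\mathcal{S}^1))} \le C_*^{1/2} |t_2 - t_1|^{1/2}
\end{equation*}
uniformly in $n$.

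Next I would upgrade this to a $C^{1,\alpha}$-in-space estimate via a chain of one-dimensional interpolation inequalities. Write $f := \gamma_n(\cdot, t_2) - \gamma_n(\cdot, t_1)$, and note that $\| f \|_{W^{2,p}(\mathcal{S}^1)} \le 2 C^*$ by \eqref{eq:3.13}, hence $\| f \|_{H^2(\mathcal{S}^1)} \le C$ (as $p \ge 2$). Since $f$ is periodic, integration by parts yields $\| f_x \|_{L^2}^2 = -\int f \cdot f_{xx} \, dx \le \| f \|_{L^2} \| f_{xx} \|_{L^2}$, so
\begin{equation*}
\| f_x \|_{L^2} \le C \| f \|_{L^2}^{1/2} \le C |t_2 - t_1|^{1/4}.
\end{equation*}
Because $f_x$ is periodic with zero mean, the standard 1D Gagliardo--Nirenberg inequality $\| g \|_{L^\infty} \le C \| g \|_{L^2}^{1/2} \| g_x \|_{L^2}^{1/2}$ applied to $g = f_x$ gives
\begin{equation*}
\| f_x \|_{L^\infty} \le C \| f_x \|_{L^2}^{1/2} \| f_{xx} \|_{L^2}^{1/2} \le C |t_2 - t_1|^{1/8}.
\end{equation*}
On the other hand, Morrey's embedding yields $[f_x]_{C^{1-1/p}(\mathcal{S}^1)} \le C \| f \|_{W^{2,p}} \le C$ uniformly in $n$. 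The elementary Hölder interpolation $[g]_{C^\alpha} \le C [g]_{C^{1-1/p}}^{\alpha/(1-1/p)} \| g \|_{L^\infty}^{1 - \alpha/(1-1/p)}$ applied to $g = f_x$ then gives
\begin{equation*}
[f_x]_{C^\alpha} \le C |t_2 - t_1|^{(1 - \alpha/(1-1/p))/8} = C |t_2 - t_1|^{((1-\alpha)p - 1)/(8(p-1))} = C |t_2 - t_1|^\beta,
\end{equation*}
and since $\beta \le 1/8$ the lower-order terms $\| f \|_{L^\infty} + \| f_x \|_{L^\infty}$ are controlled by the same power. Thus $\{ \gamma_n \}$ is uniformly bounded in $C^{0,\beta}([0,T]; C^{1,\alpha}(\mathcal{S}^1))$.

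Finally I would invoke Ascoli--Arzel\`a. For each fixed $t$, $\{ \gamma_n(\cdot, t) \}_n$ is bounded in $W^{2,p}(\mathcal{S}^1)$, which is compactly embedded in $C^{1,\alpha}(\mathcal{S}^1)$ for $\alpha < 1 - 1/p$, giving pointwise-in-$t$ precompactness; the uniform $C^{0,\beta}$ bound supplies equicontinuity in $t$. Hence $\{ \gamma_n \}$ is relatively compact in $C^0([0,T]; C^{1,\alpha}(\mathcal{S}^1))$, and combining uniform convergence with the uniform Hölder bound via $[\gamma_n - \gamma]_{C^{0,\beta'}}^{} \le C \| \gamma_n - \gamma \|_{C^0}^{1-\beta'/\beta}$ upgrades convergence to the stated $C^{0,\beta}$ class (after a standard slight reduction of parameters, absorbed by the freedom in choosing $\alpha$ and $\beta$). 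Uniqueness of weak limits identifies the limit with the $\gamma$ produced in Lemma~\ref{theorem:3.9}.

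The main obstacle is the three-step interpolation that produces the specific exponent $\beta = ((1-\alpha)p-1)/(8(p-1))$: the three successive halvings (from $1/2$ on $\| f \|_{L^2}$ down to $1/8$ on $\| f_x \|_{L^\infty}$) combined with the final Hölder interpolation against the scale-fixed $C^{1,1-1/p}$ seminorm. Everything after is routine compactness.
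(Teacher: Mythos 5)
Your proposal is correct and follows essentially the same route as the paper: the $|t_2-t_1|^{1/2}$ bound in $L^2$ from the velocity estimate \eqref{eq:3.14}, interpolation against the uniform $W^{2,p}$ bound to get $\|f_x\|_{L^\infty}\lesssim |t_2-t_1|^{1/8}$ (the paper does this in one step via Proposition~\ref{theorem:2.1} with $\theta=3/4$ rather than your two elementary steps, with the same exponent), the identical H\"older interpolation of $[f_x]_{C^\alpha}$ between $L^\infty$ and the Morrey seminorm $C^{1-1/p}$ yielding $\beta=\frac{(1-\alpha)p-1}{8(p-1)}$, and Arzel\`a--Ascoli. Your extra care about upgrading uniform convergence to $C^{0,\beta}$ convergence (slightly reducing exponents) is a point the paper glosses over, but the argument is the same.
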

\begin{proof}
Fix $0 \le t_1 < t_2 \le T$ arbitrarily. 
Since $\partial_t \gamma_n(x,t) = V_n(x,t)$ for $x \in \mathcal{S}^1$ and a.e. $0 < t < T$, we deduce from Definition \ref{theorem:3.2} that 
\begin{align*}
|\gamma_n(x,t_1) - \gamma_n(x,t_2)| \le \int^{t_2}_{t_1} |V_n(x,t)| \, dt  
 \le ( t_2 - t_1 )^{\frac{1}{2}} \Bigl( \int^{t_2}_{t_1} | V_n(x,t)|^2 \, dt \Bigr)^{\frac{1}{2}}. 
\end{align*} 
Thus, taking the squared integral of the both side with respect to $x$ on $\mathcal{S}^1$, we observe from Lemma~\ref{theorem:3.4} and Fubini's theorem that 
\begin{equation}
\label{eq:3.46}
\| \gamma_n(\cdot, t_1) - \gamma_n(\cdot,t_2) \|_{L^2(\mathcal{S}^1)} 
 \le C_*^{\frac{1}{2}} ( t_2 - t_1 )^{\frac{1}{2}}. 
\end{equation}

Let $\delta_n(x):= \gamma_n(x,t_1) - \gamma_n(x,t_2)$. 
By Proposition \ref{theorem:2.1} we find $A>0$ such that  
\begin{equation}
\label{eq:3.47}
\| (\delta_n)_x \|_{L^\infty(\mathcal{S}^1)} 
 \le A \bigl( \| (\delta_n)_{xx} \|^{3/4}_{L^2(\mathcal{S}^1)} \| \delta_n \|^{1/4}_{L^2(\mathcal{S}^1)} + \| \delta_n \|_{L^2(\mathcal{S}^1)} \bigr). 
\end{equation}
Since $| \partial_x^2 \gamma_n(t) | \le 2 \sup_{0 \le i \le n}|\partial^2_x \gamma_{i,n}|$ for $t \in [0, T]$, 
we deduce from \eqref{eq:3.7}, \eqref{eq:3.46} and \eqref{eq:3.47} that 
\begin{align}
\label{eq:3.48}
\begin{split}
\| (\delta_n)_x \|_{L^\infty(\mathcal{S}^1)} 
 \le A \Bigl[ \dfrac{2 p^{\frac{1}{p}} (\mathcal{E}_p(\gamma_0) + 1)^{\frac{p+1}{p}}}{\lambda} \Bigr]^{\frac{3}{4}} C_*^{\frac{1}{8}}(t_2-t_1)^{\frac{1}{8}} 
    + A C_*^{\frac{1}{2}}(t_2 - t_1)^{\frac{1}{2}}.  
\end{split}
\end{align}
Along the same line we have 
\begin{align}
\label{eq:3.49}
\begin{split}
\| \delta_n \|_{L^\infty(\mathcal{S}^1)} 
&\le A \bigl( \| (\delta_n)_{xx} \|^{1/4}_{L^2(\mathcal{S}^1)} \| \delta_n \|^{3/4}_{L^2(\mathcal{S}^1)} + \| \delta_n \|_{L^2(\mathcal{S}^1)} \bigr)\\
&\le A \Bigl[ \dfrac{2 p^{\frac{1}{p}} (\mathcal{E}_p(\gamma_0) + 1)^{\frac{p+1}{p}}}{\lambda} \Bigr]^{\frac{1}{4}} C_*^{\frac{3}{8}} (t_2-t_1)^{\frac{3}{8}} + A C_*^{\frac{1}{2}} (t_2 - t_1)^{\frac{1}{2}}. 
\end{split}
\end{align}
We observe that \eqref{eq:3.48} and \eqref{eq:3.49} that it suffices to estimate the H\"older semi-norm of $(\delta_n)_x$. 
Fix $0 < \alpha < 1-1/p$ arbitrarily. 
Adopting Morrey's inequality, we obtain 
\begin{align*}
&\sup_{x_1, x_2 \in \mathcal{S}^1} \dfrac{|(\delta_n)_x(x_1) - (\delta_n)_x(x_2)|}{|x_1 - x_2|^{\alpha}} \\
& \quad = \sup_{x_1, x_2 \in \mathcal{S}^1}\Bigl( \dfrac{|(\delta_n)_x(x_1) - (\delta_n)_x(x_2)|}{|x_1 - x_2|^{\frac{p-1}{p}}} \Bigr)^{\frac{\alpha p}{p-1}} 
     |(\delta_n)_x(x_1) - (\delta_n)_x(x_2)|^{1- \frac{\alpha p}{p-1}} \\
& \quad \le C \| (\delta_n)_x \|_{C^{\frac{p-1}{p}}(\mathcal{S}^1)}^{\frac{\alpha p}{p-1}} \| (\delta_n)_x \|_{L^\infty(\mathcal{S}^1)}^{\frac{1- \alpha p}{p-1}} 
  \quad \le C(T) \| \delta_n \|_{W^{2,p}(\mathcal{S}^1)}^{\alpha p/(p-1)} (t_2 - t_1)^{\frac{(1-\alpha)p-1}{8(p-1)}}. 
\end{align*}
Thus Lemma \ref{theorem:3.10} follows from the Arzel\`a--Ascoli theorem (see e.g. \cite[Proposition~3.3.1]{AGS}). 
\end{proof}

\begin{lemma} \label{theorem:3.11}
Let $\tilde{\gamma}_n$ be the piecewise linear interpolations of the family of planar closed curves $\{ \gamma_{i,n}\}$ obtained by Lemma~{\rm \ref{theorem:3.1}}. 
Then  
\begin{align}
& \tilde{\gamma}_n \rightharpoonup \gamma \quad \text{weakly in} \quad L^p(0,T;W^{2,p}(\mathcal{S}^1)), \label{eq:3.50} \\
& \tilde{\gamma}_n \to \gamma \quad \text{in} \quad L^{\infty}(0,T; C^{1,\alpha}(\mathcal{S}^1)), \label{eq:3.51} \\
& \tilde{\Gamma}_n \to \gamma \quad \text{in} \quad L^{\infty}(0,T; C^{1,\alpha}(\mathcal{S}^1)), \label{eq:3.52}
\end{align}
up to a subsequence, where $0<\alpha<1-1/p$ and $\gamma$ is the limit obtained in Lemma~{\rm \ref{theorem:3.9}}.  
\end{lemma}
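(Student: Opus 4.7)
The plan is to deduce all three convergences from the already-established results on $\gamma_n$ (Lemmas~\ref{theorem:3.9}--\ref{theorem:3.10}) combined with the uniform bound of Lemma~\ref{theorem:3.4} and the elementary fact that $\tilde{\gamma}_n$, $\tilde{\Gamma}_n$ and $\gamma_n$ coincide at the time grid points and differ only by an $O(\tau_n)$-quantity off-grid, quantified by the velocity bound \eqref{eq:3.14}.

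First I would establish \eqref{eq:3.50}. The uniform estimate \eqref{eq:3.13} gives $\{\tilde{\gamma}_n\}$ bounded in $L^\infty(0,T; W^{2,p}(\mathcal{S}^1))$, hence a fortiori in $L^p(0,T;W^{2,p}(\mathcal{S}^1))$. By reflexivity, a subsequence converges weakly in $L^p(0,T;W^{2,p}(\mathcal{S}^1))$ to some limit $\bar{\gamma}$. To identify $\bar{\gamma}=\gamma$, I compare with $\gamma_n$: for $t\in((i-1)\tau_n,i\tau_n]$ the Definitions~\ref{theorem:3.2}--\ref{theorem:3.3} yield
\begin{equation*}
\tilde{\gamma}_n(x,t) - \gamma_n(x,t) = (i\tau_n - t)\, V_{i,n}(x),
\end{equation*}
so Fubini's theorem and \eqref{eq:3.14} give
\begin{equation*}
\| \tilde{\gamma}_n - \gamma_n \|^2_{L^2(0,T; L^2(\mathcal{S}^1))}
\le \tau_n^2 \int_0^T\!\!\!\int_0^1 |V_n|^2\,dxdt \le \tau_n^2\, C_* \longrightarrow 0.
\end{equation*}
Since $\gamma_n\rightharpoonup\gamma$ in $L^2(0,T;L^2(\mathcal{S}^1))$ by Lemma~\ref{theorem:3.9} and continuous embedding, the weak $L^2$-limit of $\tilde{\gamma}_n$ must equal $\gamma$, forcing $\bar{\gamma}=\gamma$.

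Next I would handle \eqref{eq:3.51} and \eqref{eq:3.52} together. The key observation is that for $t\in((i-1)\tau_n,i\tau_n]$ we have $\gamma_{i,n}(x)=\gamma_n(x,i\tau_n)$ and $\gamma_{i-1,n}(x)=\gamma_n(x,(i-1)\tau_n)$, so
\begin{equation*}
\tilde{\gamma}_n(x,t)=\gamma_n(x,i\tau_n), \qquad \tilde{\Gamma}_n(x,t)=\gamma_n(x,(i-1)\tau_n).
\end{equation*}
Consequently, by the triangle inequality,
\begin{equation*}
\|\tilde{\gamma}_n(\cdot,t)-\gamma(\cdot,t)\|_{C^{1,\alpha}}
\le \|\gamma_n(\cdot,i\tau_n)-\gamma(\cdot,i\tau_n)\|_{C^{1,\alpha}} + \|\gamma(\cdot,i\tau_n)-\gamma(\cdot,t)\|_{C^{1,\alpha}}.
\end{equation*}
The first term is dominated by $\|\gamma_n-\gamma\|_{C^0([0,T];C^{1,\alpha})}\to 0$ thanks to Lemma~\ref{theorem:3.10}. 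For the second, since $\gamma_n\to\gamma$ in $C^{0,\beta}([0,T];C^{1,\alpha})$ the limit $\gamma$ inherits the same time-H\"older regularity, so this term is bounded by $C|i\tau_n-t|^\beta\le C\tau_n^\beta\to 0$. Taking the supremum in $t$ yields \eqref{eq:3.51}, and the identical argument with $i\tau_n$ replaced by $(i-1)\tau_n$ yields \eqref{eq:3.52}.

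I do not anticipate a substantial obstacle here: the only mildly subtle point is identifying the weak $L^p(0,T;W^{2,p})$-limit of $\tilde{\gamma}_n$, which is settled via the strong $L^2$-closeness of $\tilde{\gamma}_n$ and $\gamma_n$ derived from \eqref{eq:3.14}. Everything else follows by straightforwardly transferring the convergences of $\gamma_n$ through the grid-value identities above.
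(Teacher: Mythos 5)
Your proof is correct, and for \eqref{eq:3.51}--\eqref{eq:3.52} it is essentially the paper's argument: both exploit the grid-point identity $\tilde{\gamma}_n(\cdot,t)=\gamma_n(\cdot,i\tau_n)$ together with the $\beta$-H\"older continuity in time furnished by Lemma~\ref{theorem:3.10} (the paper applies the uniform H\"older bound to $\gamma_n$ itself to get $\|\tilde{\gamma}_n-\gamma_n\|_{L^\infty(0,T;C^{1,\alpha}(\mathcal{S}^1))}\le C\tau_n^\beta$ and then invokes $\gamma_n\to\gamma$, while you apply the same bound to the limit $\gamma$ after a triangle inequality; the two are interchangeable). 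The only genuine divergence is in \eqref{eq:3.50}: the paper identifies the weak limit of $\partial_x^2\tilde{\gamma}_n$ by integrating by parts once against smooth test functions and invoking the already-established strong convergence \eqref{eq:3.51}, whereas you identify it independently of \eqref{eq:3.51}, via the explicit formula $\tilde{\gamma}_n-\gamma_n=(i\tau_n-t)V_{i,n}$ and the velocity bound \eqref{eq:3.14}, which gives $\|\tilde{\gamma}_n-\gamma_n\|_{L^2(0,T;L^2(\mathcal{S}^1))}\le \tau_n C_*^{1/2}\to 0$. Your route is marginally more self-contained, decoupling \eqref{eq:3.50} from \eqref{eq:3.51} at the cost of explicitly invoking reflexivity of $L^p(0,T;W^{2,p}(\mathcal{S}^1))$ and the continuous embedding into $L^2(0,T;L^2(\mathcal{S}^1))$ (available here since $p\ge 2$ and the domains are bounded); both identifications are standard and equally rigorous.
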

\begin{proof}

Fix $(x,t) \in \mathcal{S}^1 \times ((i-1)\tau_n, i \tau_n]$ arbitrarily. 
 We deduce from Lemma \ref{theorem:3.10} that 
\begin{align*}
| \partial^j_x \tilde{\gamma}_n(x, t) - \partial^j_x \gamma_n(x,t)| 
 &= | \partial^j_x \gamma_{i,n}(x) - \partial^j_x \gamma_n(x,t)| 
   = | \partial^j_x \gamma_n(x, i \tau_n) - \partial^j_x \gamma_n(x,t) | \\
 &\le C | t - i \tau_n|^\beta 
   \le C \tau_n^\beta, 
\end{align*}
for $j=0, 1$, where $\beta$ is given constant in Lemma \ref{theorem:3.10}. 
Thus we have 
\begin{align*}
\| \tilde{\gamma}_n - \gamma_n \|_{L^\infty(0,T; C^{1, \alpha}(\mathcal{S}^1))} \to 0 \quad \text{as} \quad n \to \infty. 
\end{align*}
This together with Lemma \ref{theorem:3.10} implies \eqref{eq:3.51}.  
Similarly we obtain \eqref{eq:3.52}. 

We turn to \eqref{eq:3.50}. 
Thanks to \eqref{eq:3.51}, we have 
\begin{align*}
\Bigl| \int^T_0 \!\!\! \int^1_0 (\partial^2_x \tilde{\gamma}_n - \partial^2_x \gamma) \cdot \eta \, dx dt \Bigr|
 &= \Bigl| \int^T_0 \!\!\! \int^1_0 (\partial_x \tilde{\gamma}_n - \partial_x \gamma) \cdot \partial_x \eta \, dx dt \Bigr| \\
 &\le \| \tilde{\gamma}_n - \gamma \|_{L^\infty(0,T;C^{1, \alpha}(\mathcal{S}^1))} \| \partial_x \eta \|_{L^1(0,T;L^1(\mathcal{S}^1))} \\
 & \to 0 \quad \text{as} \quad n \to \infty  
\end{align*}
for all $\eta \in C^\infty(\mathcal{S}^1 \times (0,T))$. This together with \eqref{eq:3.51} implies \eqref{eq:3.50}. 
Therefore Lemma \ref{theorem:3.11} follows. 
\end{proof}

\begin{lemma} \label{theorem:3.12}
Let $\tilde{\gamma}_{n}$ be the piecewise constant interpolation of $\{ \gamma_{i,n} \}$. 
Then 
\begin{align*}
\int^{T}_{0} \!\!\! \int^1_0 |\partial^{2}_{x} \tilde{\gamma}_{n}|^{p-2} \partial^{2}_{x} \tilde{\gamma}_{n} \cdot \partial^{2}_{x} \eta \, dx dt 
 \to \int^{T}_{0} \!\!\! \int^1_0 |\partial^{2}_{x} \gamma|^{p-2} \partial^{2}_{x} \gamma \cdot \partial^{2}_{x} \eta \, dx dt 
 \quad \text{as} \quad n \to \infty 
\end{align*}
for $\eta \in L^{p}(0,T; W^{2,p}(\mathcal{S}^1))$, where $\gamma$ denotes the limit obtained by Lemma~{\rm \ref{theorem:3.9}}. 
\end{lemma}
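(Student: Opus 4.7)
Let $w_n := |\partial^2_x \tilde{\gamma}_n|^{p-2}\partial^2_x \tilde{\gamma}_n$. The goal is to show that, along a subsequence, $w_n \rightharpoonup |\partial^2_x \gamma|^{p-2}\partial^2_x \gamma$ weakly in $L^{p/(p-1)}((0,T)\times\mathcal{S}^1)$; since $\partial^2_x \eta \in L^p((0,T)\times\mathcal{S}^1)$ lies in the dual space, this immediately yields the stated integral convergence, and uniqueness of the limit upgrades the subsequential convergence to the full sequence. Weak compactness is automatic: $\|w_n\|_{L^{p/(p-1)}}^{p/(p-1)} = \int_0^T\!\int_0^1 |\partial^2_x \tilde{\gamma}_n|^p\,dx\,dt$ is bounded uniformly by Lemma~\ref{theorem:3.4}, and in view of the strictly stronger bound $w_n \in L^2(0,T;H^1(\mathcal{S}^1))$ furnished by Lemma~\ref{theorem:3.8} one extracts a subsequence with $w_n \rightharpoonup w^*$ in both spaces.

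The crux is to identify $w^* = |\partial^2_x \gamma|^{p-2}\partial^2_x \gamma$ almost everywhere. My approach is to upgrade the weak convergence of $w_n$ to an a.e.\ pointwise convergence. The bound $w_n \in L^2(0,T;H^1(\mathcal{S}^1))$ together with the compact embedding $H^1(\mathcal{S}^1) \hookrightarrow\hookrightarrow C(\mathcal{S}^1)$ in one dimension gives slice-by-slice spatial compactness; for the required time regularity I would derive a uniform estimate on translates $w_n(\cdot,t+h)-w_n(\cdot,t)$ in some negative Sobolev norm by exploiting the Euler-Lagrange identity of Lemma~\ref{theorem:3.6}, whose leading term is $\partial_x^2 w_n$, and the $L^2$-bound on the discrete velocity $V_n$ from Lemma~\ref{theorem:3.4}. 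A Simon-Aubin compactness lemma then delivers $w_n \to w^*$ strongly in $L^2((0,T)\times\mathcal{S}^1)$, and hence, along a further subsequence, pointwise almost everywhere.

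With pointwise convergence of $w_n$ in hand, I invoke that $\mathbb{R}^2\ni r \mapsto |r|^{p-2}r$ is a homeomorphism with continuous inverse $s \mapsto |s|^{(2-p)/(p-1)}s$ (extended by zero at $s=0$). Continuity of the inverse transfers a.e.\ convergence $w_n \to w^*$ to a.e.\ convergence $\partial^2_x \tilde{\gamma}_n \to |w^*|^{(2-p)/(p-1)} w^*$. Combined with the weak $L^p$-convergence $\partial^2_x \tilde{\gamma}_n \rightharpoonup \partial^2_x \gamma$ from Lemma~\ref{theorem:3.11}, uniqueness of the limit yields $|w^*|^{(2-p)/(p-1)} w^* = \partial^2_x \gamma$, i.e.\ $w^* = |\partial^2_x \gamma|^{p-2}\partial^2_x \gamma$. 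Pairing the resulting weak convergence in $L^{p/(p-1)}$ against $\partial^2_x \eta \in L^p$ concludes the proof.

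The main obstacle is the middle step: since $w_n$ is piecewise constant in $t$, $\partial_t w_n$ is not directly controlled, so Aubin-Lions is not immediately available and one must argue through time-translation estimates. An alternative would be a Minty-type monotonicity argument based on $(|a|^{p-2}a-|b|^{p-2}b)\cdot(a-b) \ge c_p|a-b|^p$ for $p\ge 2$; this, however, would require separately establishing $\int\int|\partial^2_x\tilde{\gamma}_n|^p \to \int\int|\partial^2_x\gamma|^p$, which appears to need a comparable compactness input and so offers no genuine shortcut.
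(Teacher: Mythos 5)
Your first and last steps are fine, but the middle step --- strong (hence a.e.) convergence of $w_n=|\partial_x^2\tilde\gamma_n|^{p-2}\partial_x^2\tilde\gamma_n$ via Aubin--Lions/Simon --- has a genuine gap, and it sits exactly at the crux you yourself flagged. The Euler--Lagrange identity of Lemma~\ref{theorem:3.6} yields \emph{spatial} regularity of $w_n$ (namely $\|\partial_x w_n(t)\|_{L^2}\le C(1+\|V_n(t)\|_{L^2})$, which is Lemma~\ref{theorem:3.8}); it gives no control whatsoever on the time increments $w_n(\cdot,t+h)-w_n(\cdot,t)$. The only time regularity in the scheme is $\partial_t\gamma_n=V_n\in L^2(0,T;L^2)$, which lives two spatial derivatives and one nonlinearity below $w_n$: to convert $\|\tilde\gamma_n(\cdot,t+h)-\tilde\gamma_n(\cdot,t)\|_{L^2}\lesssim\sqrt h$ into smallness of $w_n(\cdot,t+h)-w_n(\cdot,t)$ in a negative Sobolev norm you would have to move the two $x$-derivatives onto the test function, and the nonlinearity $a\mapsto|a|^{p-2}a$ blocks this for $p>2$ (for $p=2$ it works, since then $\langle w_n(t+h)-w_n(t),\phi\rangle=\langle\tilde\gamma_n(t+h)-\tilde\gamma_n(t),\partial_x^2\phi\rangle$). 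Interpolating the $\sqrt h$ bound against the uniform $W^{2,p}$ bound gives time-equicontinuity only in norms strictly weaker than $W^{2,p}$ --- that is precisely Lemmas~\ref{theorem:3.10} and~\ref{theorem:3.11} --- and never reaches the level of second derivatives. Nor does writing the Euler--Lagrange identity as $\partial_x^2 w_n=(\text{bounded})+\mathcal{L}\,V_n$ help: the differenced right-hand side is bounded but not small as $h\to0$, so Simon's criterion is not met. I do not see how to close this step.

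Ironically, the alternative you dismiss is essentially the paper's actual proof, and the extra input you fear it requires is not needed. Set $F(\psi)=\frac{1}{p\mathcal{L}(\gamma)}\int_0^T\!\!\int_0^1|\partial_x^2\psi|^p\,dx\,dt$. Convexity gives $F(\psi)-F(\tilde\gamma_n)\ge\frac{1}{\mathcal{L}(\gamma)}\int\!\!\int w_n\cdot\partial_x^2(\psi-\tilde\gamma_n)$; the dangerous weak--weak pairing on the right is integrated by parts once, using exactly the $L^2(0,T;H^1)$ bound on $w_n$ from Lemma~\ref{theorem:3.8}, into $-\int\!\!\int\partial_x w_n\cdot\partial_x(\psi-\tilde\gamma_n)$, which passes to the limit as weak ($\partial_x w_n\rightharpoonup\partial_x w$ in $L^2$) against strong ($\partial_x\tilde\gamma_n\to\partial_x\gamma$ uniformly, Lemma~\ref{theorem:3.11}). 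The term $-F(\tilde\gamma_n)$ is handled by weak lower semicontinuity alone, so no energy convergence $\int\!\!\int|\partial_x^2\tilde\gamma_n|^p\to\int\!\!\int|\partial_x^2\gamma|^p$ is required. Taking then $\psi=\gamma\pm\varepsilon\eta$ and letting $\varepsilon\downarrow0$ yields $\int\!\!\int w\cdot\partial_x^2\eta=\int\!\!\int|\partial_x^2\gamma|^{p-2}\partial_x^2\gamma\cdot\partial_x^2\eta$ for all admissible $\eta$, which is all the lemma asserts (it does not claim $w=|\partial_x^2\gamma|^{p-2}\partial_x^2\gamma$ pointwise, only equality of these pairings). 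In short: your weak-compactness bookkeeping is correct, but the identification of the limit must go through this convexity argument rather than through pointwise convergence of $w_n$.
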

\begin{proof}
By Lemma \ref{theorem:3.8} we find $w \in L^{2}(0,T; H^{1}(\mathcal{S}^1))$ such that  
\begin{align} \label{eq:3.53}
|\partial^{2}_{x} \tilde{\gamma}_{n}|^{p-2} \partial^{2}_{x} \tilde{\gamma}_{n} \rightharpoonup w \quad \text{weakly in} \quad 
L^{2}(0,T; H^{1}(\mathcal{S}^1)) \quad \text{as} \quad n \to \infty 
\end{align}
up to a subsequence. 
Here we set 
\begin{align*}
F(\psi) := \dfrac{1}{p \mathcal{L}(\gamma)} \int^{T}_{0} \!\!\! \int^1_0 | \partial^{2}_{x} \psi |^{p} \, dx dt.  
\end{align*}
From now on, fix $\psi \in L^{p}(0,T; W^{2,p}(\mathcal{S}^1))$ arbitrarily. 
From the convexity of $F(\cdot)$, we observe that 
\begin{align} \label{eq:3.54}
F(\psi) - F(\tilde{\gamma}_{n})  
 \ge \dfrac{1}{\mathcal{L}(\gamma)}\int^{T}_{0} \!\!\! \int^1_0 | \partial^{2}_{x} \tilde{\gamma}_{n}|^{p-2} \partial^{2}_{x} \tilde{\gamma}_{n} \cdot \partial^{2}_{x}(\psi - \tilde{\gamma}_{n}) \, dx dt.  
\end{align}
We claim that 
\begin{align} \label{eq:3.55}
F(\psi) - F(\gamma) \ge \dfrac{1}{\mathcal{L}(\gamma)} \int^{T}_{0}\!\!\! \int^1_0 w \cdot \partial^{2}_{x}(\psi - \gamma) \, dx dt. 
\end{align} 
To begin with, it follows from Lemma \ref{theorem:3.11} that 
\begin{equation}
\label{eq:3.56}
\liminf_{n \to \infty} F(\tilde{\gamma}_n) \ge F(\gamma). 
\end{equation}
Integrating by part, we reduce the right hand side of \eqref{eq:3.54} into 
\begin{align*}
I_1 := - \dfrac{1}{\mathcal{L}(\gamma)} \int^{T}_{0} \!\!\! \int^1_0 
                                           \partial_{x}(|\partial^{2}_{x}\tilde{\gamma}_{n}|^{p-2} \partial^{2}_{x}\tilde{\gamma}_{n}) \cdot \partial_{x}(\psi-\tilde{\gamma}_{n}) \, dx dt. 
\end{align*}
By \eqref{eq:3.53} and Lemma \ref{theorem:3.11}, extracting a subsequence, we have 
\begin{align*}
I_1 \to - \dfrac{1}{\mathcal{L}(\gamma)} \int^{T}_{0} \!\!\! \int^1_0 \partial_{x} w \cdot \partial_{x}(\psi- \gamma) \, dx dt
 = \dfrac{1}{\mathcal{L}(\gamma)} \int^{T}_{0} \!\!\! \int^1_0 w \cdot \partial^{2}_{x}(\psi- \gamma) \, dx dt 
\end{align*}
as $n \to \infty$. 
This together with \eqref{eq:3.54} and \eqref{eq:3.56} implies \eqref{eq:3.55}. 

Setting $\psi= \gamma + \varepsilon \eta$ in \eqref{eq:3.55} for $\eta \in L^p(0,T;W^{2,p}(\mathcal{S}^1))$, we obtain 
\begin{align} \label{eq:3.57}
\dfrac{F(\gamma + \varepsilon \eta) - F(\gamma)}{\varepsilon} \ge \dfrac{1}{\mathcal{L}(\gamma)} \int^{T}_{0} \!\!\! \int^1_0 w \cdot \partial^{2}_{x} \eta \, dx dt.
\end{align}
On the other hand, putting $\psi= \gamma - \varepsilon \eta$ in \eqref{eq:3.55} for $\eta \in L^p(0,T;W^{2,p}(\mathcal{S}^1))$, we get 
\begin{align} \label{eq:3.58}
\dfrac{F(\gamma) - F(\gamma - \varepsilon \eta)}{\varepsilon} \le \dfrac{1}{\mathcal{L}(\gamma)} \int^{T}_{0} \!\!\! \int^1_0 w \cdot \partial^{2}_{x} \eta \, dx dt.
\end{align}
Plugging \eqref{eq:3.58} into \eqref{eq:3.57} and letting $\varepsilon \downarrow 0$, we find  
\begin{align*}
\int^{T}_{0} \!\!\! \int^1_0 | \partial^{2}_{x} \gamma |^{p-2} \partial^{2}_{x} \gamma \cdot \partial^{2}_{x} \eta \, dx dt 
 = \int^{T}_{0} \!\!\! \int^1_0 w \cdot \partial^{2}_{x} \eta \, dx dt 
\end{align*}
for all $\eta \in L^p(0,T;W^{2,p}(\mathcal{S}^1))$. 
Thus Lemma \ref{theorem:3.12} follows. 
\end{proof}

\begin{lemma} \label{theorem:3.13}
Let $\tilde{\gamma}_{n}$ be the piecewise constant interpolation of $\{ \gamma_{i,n} \}$. 
Then we have 
\begin{align*}
\int^{T}_{0} \!\!\! \int^1_0 | \partial^2_x \tilde{\gamma}_n |^{p} \partial_x \tilde{\gamma}_n \cdot \partial_x \eta \, dx dt 
 \to \int^{T}_{0} \!\!\! \int^1_0 | \partial^2_x \gamma |^{p} \partial_x \gamma \cdot \partial_x \eta \, dx dt 
 \quad \text{as} \quad n \to \infty 
\end{align*}
for all $\eta \in L^{p}(0,T; W^{2,p}(\mathcal{S}^1))$, where $\gamma$ denotes the planar curve obtained by Lemma~{\rm \ref{theorem:3.9}}. 
\end{lemma}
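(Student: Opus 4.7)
The plan is to exploit the orthogonality $\partial_{x}\tilde{\gamma}_{n}\cdot\partial^{2}_{x}\tilde{\gamma}_{n}\equiv 0$ inherited from the class $\mathcal{AC}$, together with the weak convergence $u_{n}:=|\partial^{2}_{x}\tilde{\gamma}_{n}|^{p-2}\partial^{2}_{x}\tilde{\gamma}_{n}\rightharpoonup u:=|\partial^{2}_{x}\gamma|^{p-2}\partial^{2}_{x}\gamma$ in $L^{2}(0,T;H^{1}(\mathcal{S}^{1}))$ already established during the proof of Lemma~\ref{theorem:3.12}. The key algebraic identities are
\begin{equation*}
|\partial^{2}_{x}\tilde{\gamma}_{n}|^{p}=u_{n}\cdot\partial^{2}_{x}\tilde{\gamma}_{n},\qquad u_{n}\cdot\partial_{x}\tilde{\gamma}_{n}\equiv 0,
\end{equation*}
the latter holding because $u_{n}$ is parallel to $\partial^{2}_{x}\tilde{\gamma}_{n}$ while $\partial^{2}_{x}\tilde{\gamma}_{n}\perp\partial_{x}\tilde{\gamma}_{n}$ for any curve in $\mathcal{AC}$.

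Integrating by parts in $x$ along $\mathcal{S}^{1}$ (no boundary terms by periodicity) and using $u_{n}\cdot\partial_{x}\tilde{\gamma}_{n}\equiv 0$ to annihilate the term involving $\partial_{x}(\partial_{x}\tilde{\gamma}_{n}\cdot\partial_{x}\eta)$, I obtain
\begin{equation*}
\int^{T}_{0}\!\!\!\int^{1}_{0}|\partial^{2}_{x}\tilde{\gamma}_{n}|^{p}(\partial_{x}\tilde{\gamma}_{n}\cdot\partial_{x}\eta)\,dx\,dt=-\int^{T}_{0}\!\!\!\int^{1}_{0}\partial_{x}u_{n}\cdot\partial_{x}\tilde{\gamma}_{n}\,(\partial_{x}\tilde{\gamma}_{n}\cdot\partial_{x}\eta)\,dx\,dt.
\end{equation*}
From Lemma~\ref{theorem:3.8} and the weak limit identified in the proof of Lemma~\ref{theorem:3.12}, $\partial_{x}u_{n}\rightharpoonup\partial_{x}u$ weakly in $L^{2}(0,T;L^{2}(\mathcal{S}^{1}))$. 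Lemma~\ref{theorem:3.11} supplies $\partial_{x}\tilde{\gamma}_{n}\to\partial_{x}\gamma$ uniformly on $\mathcal{S}^{1}\times[0,T]$; combined with $\partial_{x}\eta\in L^{p}(0,T;L^{\infty}(\mathcal{S}^{1}))\hookrightarrow L^{2}(0,T;L^{2}(\mathcal{S}^{1}))$ for $p\ge 2$ and $T<\infty$, this yields the strong convergence
\begin{equation*}
\partial_{x}\tilde{\gamma}_{n}\,(\partial_{x}\tilde{\gamma}_{n}\cdot\partial_{x}\eta)\longrightarrow\partial_{x}\gamma\,(\partial_{x}\gamma\cdot\partial_{x}\eta)\quad\text{in}\quad L^{2}(0,T;L^{2}(\mathcal{S}^{1};\mathbb{R}^{2})).
\end{equation*}
The weak--strong pairing then identifies the limit of the right-hand side as $-\int^{T}_{0}\int^{1}_{0}\partial_{x}u\cdot\partial_{x}\gamma\,(\partial_{x}\gamma\cdot\partial_{x}\eta)\,dx\,dt$.

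To finish, the $L^{\infty}(0,T;C^{1,\alpha})$-convergence $\tilde{\gamma}_{n}\to\gamma$ from Lemma~\ref{theorem:3.11} forces $|\partial_{x}\gamma(\cdot,t)|\equiv\mathcal{L}(\gamma(\cdot,t))$, so that $\gamma(\cdot,t)\in\mathcal{AC}$ for a.e. $t$ and in particular $u\cdot\partial_{x}\gamma\equiv 0$. Reversing the integration by parts on the limit, with the cross-term vanishing for exactly the same reason as before, I recover
\begin{equation*}
-\int^{T}_{0}\!\!\!\int^{1}_{0}\partial_{x}u\cdot\partial_{x}\gamma\,(\partial_{x}\gamma\cdot\partial_{x}\eta)\,dx\,dt=\int^{T}_{0}\!\!\!\int^{1}_{0}u\cdot\partial^{2}_{x}\gamma\,(\partial_{x}\gamma\cdot\partial_{x}\eta)\,dx\,dt=\int^{T}_{0}\!\!\!\int^{1}_{0}|\partial^{2}_{x}\gamma|^{p}\,\partial_{x}\gamma\cdot\partial_{x}\eta\,dx\,dt,
\end{equation*}
which is the claim. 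The main obstacle is structural rather than technical: since only weak convergence of $\partial^{2}_{x}\tilde{\gamma}_{n}$ in $L^{p}$ is available, the nonlinear product $|\partial^{2}_{x}\tilde{\gamma}_{n}|^{p}\partial_{x}\tilde{\gamma}_{n}$ cannot be passed to the limit directly. The geometric constraint $u_{n}\cdot\partial_{x}\tilde{\gamma}_{n}=0$ built into the constant-speed parametrization is the crucial ingredient that makes the integration-by-parts rearrangement collapse onto a weak--strong pairing for $\partial_{x}u_{n}$, which is exactly where the regularity supplied by Lemma~\ref{theorem:3.8} can be cashed in.
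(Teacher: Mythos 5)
Your argument is correct, but it is organized genuinely differently from the paper's. The paper splits the difference of the two integrals into $I_1+I_2$: the piece $I_2$ carrying $(\partial_x\tilde{\gamma}_n-\partial_x\gamma)$ is killed by the uniform convergence of Lemma~\ref{theorem:3.11} together with the $L^2_tL^\infty_x$ bound on $|\partial^2_x\tilde{\gamma}_n|^{p-1}$ from Lemma~\ref{theorem:3.8}, while $I_1$ is further split into a weak--strong pairing for $\tilde{w}_n\rightharpoonup w$ itself and a term that is integrated by parts (much as you do) and controlled by the $L^2(0,T;L^2)$ bound on $\partial_x\tilde{w}_n$. You instead exploit the constant-speed constraint $\partial_x\tilde{\gamma}_n\cdot\partial^2_x\tilde{\gamma}_n\equiv 0$, hence $u_n\cdot\partial_x\tilde{\gamma}_n\equiv 0$, to integrate by parts once and collapse the \emph{entire} integral into the single weak--strong pairing $-\int_0^T\!\!\int_0^1\partial_x u_n\cdot\partial_x\tilde{\gamma}_n\,(\partial_x\tilde{\gamma}_n\cdot\partial_x\eta)\,dx\,dt$, and then undo the integration by parts on the limit using $\gamma(\cdot,t)\in\mathcal{AC}$. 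This is arguably cleaner: it uses only the $H^1$ bound of Lemma~\ref{theorem:3.8} and dispenses with the separate estimate of $I_2$, at the price of leaning harder on the geometric orthogonality (which the paper uses elsewhere, e.g.\ in Lemma~\ref{theorem:4.6}, but not here). Both routes rest on the same three inputs: the weak limit \eqref{eq:3.53}, its identification via Lemma~\ref{theorem:3.12}, and the $L^\infty(0,T;C^{1,\alpha})$ convergence of Lemma~\ref{theorem:3.11}. One point worth making explicit in a final write-up: the Minty argument in Lemma~\ref{theorem:3.12} identifies $w$ with $|\partial^2_x\gamma|^{p-2}\partial^2_x\gamma$ only through pairings with $\partial^2_x\eta$, i.e.\ a priori only up to an additive function of $t$ alone; your proof is insensitive to this because you only ever invoke $\partial_x u_n\rightharpoonup\partial_x w=\partial_x\bigl(|\partial^2_x\gamma|^{p-2}\partial^2_x\gamma\bigr)$, and the final reverse integration by parts is a deterministic identity for the limit curve. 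As in the paper, the convergence you obtain is along a subsequence, which is consistent with how the lemma is used.
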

\begin{proof}
To begin with, we have 
\begin{align*}
& \Bigl| \int^{T}_{0} \!\!\! \int^1_0 | \partial^2_x \tilde{\gamma}_n |^{p} \partial_x \tilde{\gamma}_n \cdot \partial_x \eta \, dx dt 
           - \int^{T}_{0} \!\!\! \int^1_0 | \partial^2_x \gamma |^{p} \partial_x \gamma \cdot \partial_x \eta \, dx dt \Bigr| \\
& \quad \le \Bigl| \int^{T}_{0} \!\!\! \int^1_0 ( | \partial^2_x \tilde{\gamma}_n |^{p} - | \partial^2_x \gamma|^{p} ) \partial_x \gamma \cdot \partial_x \eta \, dx dt \Bigr| \\
& \qquad + \Bigl| \int^{T}_{0} \!\!\! \int^1_0 | \partial^2_x \tilde{\gamma}_n |^{p} ( \partial_x \tilde{\gamma}_n - \partial_x \gamma ) \cdot \partial_x \eta \, dx dt \Bigr| 
=: I_1 + I_2. 
\end{align*}
Setting $\tilde{w}_n:= | \partial^2_x \tilde{\gamma}_n |^{p-2} \partial^2_x \tilde{\gamma}_n$ and $w:= | \partial^2_x \gamma |^{p-2} \partial^2_x \gamma$, 
we estimate $I_{1}$ as follows: 
\begin{align*}
I_1 &=  \Bigl| \int^{T}_{0} \!\!\! \int^1_0 ( \tilde{w}_{n} \cdot \partial^2_x \tilde{\gamma}_n - w \cdot \partial^2_x \gamma ) \partial_x \gamma \cdot \partial_x \eta \, dx dt \Bigr| \\
 &\le \Bigl| \int^{T}_{0} \!\!\! \int^1_0 \{ (\tilde{w}_{n} - w) \cdot \partial^2_x \gamma \} \partial_x \gamma \cdot \partial_x \eta \, dx dt \Bigr| \\
 & \qquad + \Bigl| \int^{T}_{0} \!\!\! \int^1_0 \tilde{w}_{n} \cdot  (\partial^2_x \tilde{\gamma}_n - \partial^2_x \gamma) \partial_x \gamma \cdot \partial_x \eta \, dx dt \Bigr| 
  =: I_{11} + I_{12}. 
\end{align*}
We deduce from Proposition \ref{theorem:2.1} that 
\begin{align*}
& \int^{T}_{0} \| (\partial_x \gamma \cdot \partial_x \eta) \partial^2_x \gamma \|_{L^2(\mathcal{S}^1)}^2\, dt \\
& \quad \le C \int^{T}_{0} \| \partial^2_x \gamma \|^2_{L^2(\mathcal{S}^1)} \| \partial_x \eta \|^2_{L^\infty(\mathcal{S}^1)} \, dt \\
& \quad \le C \int^T_0 \| \partial^2_x \gamma \|_{L^{p}(\mathcal{S}^1)}^{2} 
                \bigl[ \|\partial^2_x \eta \|^{\theta}_{L^p(\mathcal{S}^1)} \| \eta \|^{1-\theta}_{L^p(\mathcal{S}^1)} + \| \eta \|_{L^p(\mathcal{S}^1)} \bigr]^2 \, dt \\
& \quad \le C \int^T_0 \| \eta \|^{2}_{W^{2,p}(\mathcal{S}^1)} \, dt 
  \le C \| \eta \|_{L^p(0,T;W^{2,p}(\mathcal{S}^1))}^2. 
\end{align*}
Hence this together with \eqref{eq:3.53} implies that  
\begin{align*}
I_{11} \to 0 \quad \text{as} \quad n \to \infty  
\end{align*}
up to a subsequence. 
Similarly, integrating by part, we observe from \eqref{eq:3.39} and Lemma~\ref{theorem:3.8} that  
\begin{align*}
I_{12} &\le \Bigl| \int^{T}_{0} \!\!\! \int^1_0 \partial_x \tilde{w}_{n} \cdot (\partial_x \tilde{\gamma}_n - \partial_x \gamma) \partial_x \gamma \cdot \partial_x \eta \, dx dt \Bigr| \\
           & \qquad + \Bigl| \int^{T}_{0} \!\!\! \int^1_0 \tilde{w}_{n} \cdot (\partial_x \tilde{\gamma}_n - \partial_x \gamma) \partial_x (\partial_x \gamma \cdot \partial_x \eta) \, dx dt \Bigr| \\
 &\le C \| \tilde{\gamma}_n - \gamma \|_{L^\infty(0,T; C^{1,\alpha}(\mathcal{S}^1))} \| \partial_x \tilde{w}_{n} \|_{L^2(0,T;L^2(\mathcal{S}^1))} \| \partial_x \eta \|_{L^2(0,T; L^2(\mathcal{S}^1))} \\
 & \quad + C \| \tilde{\gamma}_n - \gamma \|_{L^\infty(0,T; C^{1,\alpha}(\mathcal{S}^1))} \| \tilde{w}_n \|_{L^2(0,T;L^\infty(\mathcal{S}^1))} 
             \| |\partial_x(\partial_x \gamma \cdot \partial_x \eta)\|_{L^2(0,T;L^1(\mathcal{S}^1))}\\
 &\le C(T+1) \| \tilde{\gamma}_n - \gamma \|_{L^\infty(0,T; C^{1,\alpha}(\mathcal{S}^1))} \| \eta \|_{L^p(0,T;W^{2,p}(\mathcal{S}^1))}.   
\end{align*}
This together with Lemma \ref{theorem:3.11} implies that 
$$
I_{12} \to 0 \quad \text{as} \quad n \to \infty
$$
up to a subsequence. 
We turn to the estimate on $I_2$. Since  
\begin{align*}
I_{2} \le \| \tilde{\gamma}_n - \gamma \|_{L^\infty(0,T; C^{1,\alpha}(\mathcal{S}^1))} \int^T_0 \!\!\! \int^1_0 | \partial^2_x \tilde{\gamma}_n |^{p} | \partial_x \eta | \, dxdt,  
\end{align*}
it suffices to estimate the integral in the right-hand side. 
Indeed, according to Lemma~\ref{theorem:3.8}, we have 
\begin{align*}
& \int^T_0 \!\!\! \int^1_0 | \partial^2_x \tilde{\gamma}_n |^{p} | \partial_x \eta | \, dxdt 
 = \int^T_0 \!\!\! \int^1_0 | \partial^2_x \tilde{\gamma}_n |^{p-1} \cdot | \partial^2_x \tilde{\gamma}_n | | \partial_x \eta | \, dxdt \\
& \quad \le \Bigl( \int^T_0 \| \partial^2_x \tilde{\gamma}_n \|_{L^{\infty}(\mathcal{S}^1)}^{2(p-1)} \, dt \Bigr)^{\tfrac{1}{2}}  
        \Bigl( \int^T_0 \| \partial^2_x \tilde{\gamma}_n \|^{2}_{L^{2}(\mathcal{S}^1)} \| \partial_x \eta \|^{2}_{L^{2}(\mathcal{S}^1)} \, dt \Bigr)^{\tfrac{1}{2}} \\
& \quad \le C( T^{1/2} + 1) \| \eta \|_{L^{p}(0,T;W^{2,p}(\mathcal{S}^1))}^{1/2}. 
\end{align*}
Thus we deduce from Lemma \ref{theorem:3.11} that $I_{2} \to 0$ as $n \to \infty$. 
Therefore Lemma~\ref{theorem:3.13} follows. 
\end{proof}

\section{Proof of Theorem \ref{theorem:1.2}} \label{section:4}
Let $\gamma_0 \in W^{2,p}(\mathcal{S}^1;\mathbb{R}^2)$ satisfy \eqref{eq:1.1}. 
In this section, we fix such $\gamma_0$ arbitrarily, and denote the admissible set $\mathcal{AC}_{\gamma_0}$ by $\mathcal{AC}$ for short. 
\begin{lemma} \label{theorem:4.1}
Let $\gamma : \mathcal{S}^1 \times [0, T] \to \mathbb{R}^2$ be a family of closed curves obtained by Lemma \ref{theorem:3.9}. 
Then it holds that 
\begin{equation}
\label{eq:4.1}
\begin{aligned}
& \int^T_0 \!\!\!\! \int^1_0 \Bigl[ \dfrac{|\partial^2_x \gamma|^{p-2} \partial^2_x \gamma}{\mathcal{L}(\gamma)^{2p-1}} \cdot \partial^2_x \eta 
    - \dfrac{2p-1}{p} \dfrac{|\partial^2_x \gamma|^{p} \partial_x \gamma}{\mathcal{L}(\gamma)^{2p+1}} \cdot \partial_x \eta \\
& \qquad \quad 
+ \dfrac{\lambda}{\mathcal{L}(\gamma)} \partial_x \gamma \cdot \partial_x \eta 
+ \mathcal{L}(\gamma) \partial_t \gamma \cdot \eta  
+ \mathcal{L}(\gamma) \partial_t \gamma \cdot \Phi_1(\gamma,\eta) \partial_x \gamma \Bigr] \, dxdt=0   
\end{aligned}
\end{equation}
for all $\eta \in L^\infty(0,T; W^{2,p}(\mathcal{S}^1))$. 
Moreover, 
\begin{equation}
\label{eq:4.2}
\gamma(\cdot,t) \in \mathcal{AC} \quad \text{for a.e.} \quad t \in (0, T).
\end{equation} 
\end{lemma}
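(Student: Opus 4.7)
\textbf{Proof plan for Lemma~\ref{theorem:4.1}.} The plan is to start from the discrete Euler--Lagrange identity \eqref{eq:3.37} satisfied by each minimizer $\gamma_{i,n}$, integrate in time against a time-dependent test function, and then pass to the limit $n\to\infty$ using the compactness and convergence results of Section~\ref{subsection:3.2}.

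\emph{Step 1: Time-integrated discrete identity.} Fix $\eta\in L^{\infty}(0,T;W^{2,p}(\mathcal{S}^{1};\mathbb{R}^{2}))$. For a.e.\ $t\in((i-1)\tau_{n},i\tau_{n}]$, applying \eqref{eq:3.37} with test vector $\eta(\cdot,t)\in W^{2,p}(\mathcal{S}^{1};\mathbb{R}^{2})$ and then integrating on $(0,T)$ produces
\begin{align*}
\int_{0}^{T}\!\!\int_{0}^{1}\Bigl[&\frac{|\partial_{x}^{2}\tilde{\gamma}_{n}|^{p-2}\partial_{x}^{2}\tilde{\gamma}_{n}}{\mathcal{L}(\tilde{\gamma}_{n})^{2p-1}}\cdot\partial_{x}^{2}\eta
-\frac{2p-1}{p}\frac{|\partial_{x}^{2}\tilde{\gamma}_{n}|^{p}\partial_{x}\tilde{\gamma}_{n}}{\mathcal{L}(\tilde{\gamma}_{n})^{2p+1}}\cdot\partial_{x}\eta\\
&+\frac{\lambda}{\mathcal{L}(\tilde{\gamma}_{n})}\partial_{x}\tilde{\gamma}_{n}\cdot\partial_{x}\eta
+\mathcal{L}(\tilde{\Gamma}_{n})V_{n}\cdot\eta
+\mathcal{L}(\tilde{\Gamma}_{n})(V_{n}\cdot\partial_{x}\tilde{\gamma}_{n})\Phi_{1}(\tilde{\gamma}_{n},\eta)\Bigr]dxdt=0,
\end{align*}
where I have used the identity $|\kappa_{i,n}|=|\partial_{x}^{2}\gamma_{i,n}|/\mathcal{L}(\gamma_{i,n})^{2}$ valid on $\mathcal{AC}$ to rewrite the curvature factors in the form that appears in \eqref{eq:4.1}.

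\emph{Step 2: Pass to the limit.} The strong convergences $\tilde{\gamma}_{n},\tilde{\Gamma}_{n}\to\gamma$ in $L^{\infty}(0,T;C^{1,\alpha}(\mathcal{S}^{1}))$ from Lemma~\ref{theorem:3.11} yield, in particular, uniform convergence of the length functions $\mathcal{L}(\tilde{\gamma}_{n})$ and $\mathcal{L}(\tilde{\Gamma}_{n})$ on $[0,T]$ to $\mathcal{L}(\gamma)$, with a uniform positive lower bound supplied by Lemma~\ref{theorem:2.2} and \eqref{eq:3.3}. Consequently, the continuous factors $\mathcal{L}(\tilde{\gamma}_{n})^{-(2p-1)}$, $\mathcal{L}(\tilde{\gamma}_{n})^{-(2p+1)}$ and $\mathcal{L}(\tilde{\gamma}_{n})^{-1}$ may be absorbed into the test function with errors going uniformly to zero against the $L^{p'}$\!-bounded nonlinearities. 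Combining this with Lemma~\ref{theorem:3.12} handles the highest-order term, Lemma~\ref{theorem:3.13} handles the $|\partial_{x}^{2}\tilde{\gamma}_{n}|^{p}\partial_{x}\tilde{\gamma}_{n}$ term, and Lemma~\ref{theorem:3.11} together with the uniform bound from Lemma~\ref{theorem:3.4} on $\partial_{x}\tilde{\gamma}_{n}$ handles the length term.

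\emph{Step 3: Weak--strong pairings for the two $V_{n}$ terms.} From Lemma~\ref{theorem:3.9}, $V_{n}=\partial_{t}\gamma_{n}\rightharpoonup\partial_{t}\gamma$ in $L^{2}(0,T;L^{2}(\mathcal{S}^{1}))$. In the fourth integrand, $\mathcal{L}(\tilde{\Gamma}_{n})\,\eta\to\mathcal{L}(\gamma)\,\eta$ strongly in $L^{2}(0,T;L^{2}(\mathcal{S}^{1}))$ (using the uniform convergence of $\mathcal{L}(\tilde{\Gamma}_{n})$ and the $L^{\infty}$-bound on $\eta$), so weak--strong pairing gives the desired limit. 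For the $\Phi_{1}$ term, the definition \eqref{eq:3.31} and the $C^{1,\alpha}$ convergence $\tilde{\gamma}_{n}\to\gamma$ show $\Phi_{1}(\tilde{\gamma}_{n},\eta)\to\Phi_{1}(\gamma,\eta)$ strongly in $L^{\infty}(0,T;L^{\infty}(\mathcal{S}^{1}))$; combined with the strong convergence of $\mathcal{L}(\tilde{\Gamma}_{n})\partial_{x}\tilde{\gamma}_{n}$, the coefficient of $V_{n}$ converges strongly in $L^{2}(0,T;L^{2}(\mathcal{S}^{1}))$, and a second weak--strong pairing closes the argument.

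\emph{Step 4: Admissibility \eqref{eq:4.2}.} Since $\tilde{\gamma}_{n}(\cdot,t)\in\mathcal{AC}$ for a.e.\ $t$, we have $|\partial_{x}\tilde{\gamma}_{n}(x,t)|\equiv\mathcal{L}(\tilde{\gamma}_{n}(\cdot,t))$ pointwise. The $L^{\infty}(0,T;C^{1,\alpha})$ convergence from Lemma~\ref{theorem:3.11} passes this identity to the limit, giving $|\partial_{x}\gamma(x,t)|\equiv\mathcal{L}(\gamma(\cdot,t))$ for a.e.\ $t$.

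\emph{Main obstacle.} The delicate step is the last term involving $\Phi_{1}$: it is nonlocal in $x$, and is paired against the only weakly converging quantity $V_{n}$, so any loss of strong compactness of the prefactor $\mathcal{L}(\tilde{\Gamma}_{n})\partial_{x}\tilde{\gamma}_{n}\Phi_{1}(\tilde{\gamma}_{n},\eta)$ would be fatal. Ensuring that the strong $C^{1,\alpha}$ convergence indeed produces strong $L^{2}(0,T;L^{2})$ convergence of this prefactor (and, separately, confirming the uniform lower bound on $\mathcal{L}$ needed to justify dividing by $\mathcal{L}(\tilde{\gamma}_{n})^{2p\pm1}$) is where the bulk of the bookkeeping lies.
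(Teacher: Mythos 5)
Your proposal is correct and follows essentially the same route as the paper: integrate the discrete Euler--Lagrange identity of Lemma~\ref{theorem:3.6} in time against $\eta(\cdot,t)$, rewrite the curvature factors using $|\kappa_{i,n}|=|\partial_x^2\gamma_{i,n}|/\mathcal{L}(\gamma_{i,n})^2$ to arrive at the form \eqref{eq:4.4}, pass to the limit term by term via Lemmata~\ref{theorem:3.11}, \ref{theorem:3.12}, \ref{theorem:3.13} and the weak convergence $V_n\rightharpoonup\partial_t\gamma$ paired with strongly convergent coefficients, and obtain \eqref{eq:4.2} from the pointwise identity $|\partial_x\tilde{\gamma}_n|\equiv\mathcal{L}(\tilde{\gamma}_n)$ together with the $C^{1,\alpha}$ convergence. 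The points you flag as delicate (the nonlocal $\Phi_1$ term and the uniform lower bound on $\mathcal{L}$ from Lemma~\ref{theorem:2.2}) are exactly the ones the paper relies on.
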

\begin{proof}
By the definition of $\gamma_n$, $\tilde{\gamma}_n$ and $V_n$ we observe from Lemma \ref{theorem:3.6} that 
\begin{align}
\label{eq:4.3}
\begin{split}
& \int^T_0 \!\!\! \int^1_0 \Bigl[ \dfrac{|\tilde{\kappa}_n|^{p-2}\partial^2_x \tilde{\gamma}_n}{\mathcal{L}(\tilde{\gamma}_n)^3} \cdot \partial^2_x \eta  
                 -\dfrac{2p-1}{p} \dfrac{|\tilde{\kappa}_n|^p \partial_x \tilde{\gamma}_n}{\mathcal{L}(\tilde{\gamma}_n)} \cdot \partial_x \eta 
                 + \dfrac{\lambda \partial_x \tilde{\gamma}_n}{\mathcal{L}(\tilde{\gamma}_n)} \cdot \partial_x \eta \Bigr] \, dxdt \\
&              +\int^T_0 \!\!\! \int^1_0 \mathcal{L}(\tilde{\Gamma}_n) V_n \cdot \eta \, dx dt
                 + \int^T_0 \!\!\! \int^1_0 \mathcal{L}(\tilde{\Gamma}_n) V_n \cdot \Phi_1(\tilde{\gamma}_n,\eta) \partial_x \tilde{\gamma}_n \, dxdt =0 
\end{split}
\end{align}
for all $\eta \in L^\infty(0,T;W^{2,p}(\mathcal{S}^1))$. 
Since $\partial^2_x \tilde{\gamma}_n= \mathcal{L}(\tilde{\gamma}_n) \tilde{\kappa}_n \mathcal{R} \partial_x \tilde{\gamma}_n$ 
and $|\partial_x \tilde{\gamma}_n|= \mathcal{L}(\tilde{\gamma}_n)$, we reduce~\eqref{eq:4.3} into 
\begin{align}
\label{eq:4.4}
\begin{split}
& \int^T_0 \!\!\!\! \int^1_0 \Bigl[ \dfrac{|\partial^2_x \tilde{\gamma}_n|^{p-2} \partial^2_x \tilde{\gamma}_n}{\mathcal{L}(\tilde{\gamma}_n)^{2p-1}} \cdot \partial^2_x \eta 
    - \dfrac{2p-1}{p} \dfrac{|\partial^2_x \tilde{\gamma}_n|^{p} \partial_x \tilde{\gamma}_n}{\mathcal{L}(\tilde{\gamma}_n)^{2p+1}} \cdot \partial_x \eta 
    + \dfrac{\lambda}{\mathcal{L}(\tilde{\gamma}_n)} \partial_x \tilde{\gamma}_n \cdot \partial_x \eta \\
& \qquad \quad 
+ \mathcal{L}(\tilde{\Gamma}_n) V_n \cdot \eta  
+ \mathcal{L}(\tilde{\Gamma}_n) V_n \cdot \Phi_1(\tilde{\gamma}_n,\eta) \partial_x \tilde{\gamma}_n \Bigr] \, dxdt=0.  
\end{split}
\end{align}
 
By Lemmata \ref{theorem:3.11}, \ref{theorem:3.12} and~\ref{theorem:3.13} we have 
\begin{equation}
\label{eq:4.5}
\int^T_0 \!\!\!\! \int^1_0 \dfrac{|\partial^2_x \tilde{\gamma}_n|^{p-2} \partial^2_x \tilde{\gamma}_n}{\mathcal{L}(\tilde{\gamma}_n)^{2p-1}} \cdot \partial^2_x \eta \, dx dt 
\to 
\int^T_0 \!\!\!\! \int^1_0 \dfrac{|\partial^2_x \gamma|^{p-2} \partial^2_x \gamma}{\mathcal{L}(\gamma)^{2p-1}} \cdot \partial^2_x \eta \, dx dt  
\end{equation}
for all $\eta \in L^\infty(0,T;W^{2,p}(\mathcal{S}^1))$ as $n \to \infty$ up to a subsequence. 
Similarly we also obtain 
\begin{equation}
\label{eq:4.6}
\begin{split}
&\int^T_0 \!\!\! \int^1_0 \dfrac{|\partial^2_x \tilde{\gamma}_n|^{p} \partial_x \tilde{\gamma}_n}{\mathcal{L}(\tilde{\gamma}_n)^{2p+1}} \cdot \partial_x \eta \, dx dt 
\to 
\int^T_0 \!\!\! \int^1_0 \dfrac{|\partial^2_x \gamma|^{p} \partial_x \gamma}{\mathcal{L}(\gamma)^{2p+1}} \cdot \partial_x \eta \, dx dt, \\
&\int^T_0 \!\!\! \int^1_0 \dfrac{\lambda}{\mathcal{L}(\tilde{\gamma}_n)} \partial_x \tilde{\gamma}_n \cdot \partial_x \eta \, dxdt 
\to 
\int^T_0 \!\!\! \int^1_0 \dfrac{\lambda}{\mathcal{L}(\gamma)} \partial_x \gamma \cdot \partial_x \eta \, dxdt, 
\end{split}
\end{equation}
for all $\eta \in L^\infty(0,T;W^{2,p}(\mathcal{S}^1))$ as $n \to \infty$ up to a subsequence. 
Recalling that $\partial_t \gamma_n = V_n$, we infer from Lemmata \ref{theorem:3.9} and \ref{theorem:3.11} that  
\begin{equation}
\label{eq:4.7}
\begin{split}
&\int^T_0 \!\!\! \int^1_0 \mathcal{L}(\tilde{\Gamma}_n) V_n \cdot \eta \, dxdt 
  \to \int^T_0 \!\!\! \int^1_0 \mathcal{L}(\gamma) \partial_t \gamma \cdot \eta \, dxdt, \\
& \int^T_0 \!\!\! \int^1_0 \mathcal{L}(\tilde{\Gamma}_n) V_n \cdot \Phi_1(\tilde{\gamma}_n,\eta) \partial_x \tilde{\gamma}_n \, dx dt 
   \to \int^T_0 \!\!\! \int^1_0 \mathcal{L}(\gamma) \partial_t \gamma \cdot \Phi_1(\gamma,\eta) \partial_x \gamma \, dx dt,                  
\end{split}
\end{equation}
as $n \to \infty$ up to a subsequence.  
Extracting a subsequence and letting $n \to \infty$ in \eqref{eq:4.4}, 
we observe from \eqref{eq:4.5}, \eqref{eq:4.6} and \eqref{eq:4.7} that $\gamma$ satisfies the desired weak form. 

Since 
\begin{equation*}
|\partial_x \tilde{\gamma}_n(x,t)|=|\partial_x \gamma_{i,n}(x)|=\mathcal{L}(\gamma_{i,n}) =\mathcal{L}(\tilde{\gamma}_n(t))
\end{equation*}
for all $(x,t) \in \mathcal{S}^1 \times ((i-1) \tau_n, i \tau_n]$, we deduce from Lemma \ref{theorem:3.11} that 
\begin{align*}
|\partial_x \gamma(x,t)|= \mathcal{L}(\gamma(t))
\end{align*}
for all $x \in \mathcal{S}^1$ and a.e. $t \in (0,T)$. 
Therefore Lemma \ref{theorem:4.1} follows. 
\end{proof}

\begin{lemma} \label{theorem:4.2}
Let $\gamma : \mathcal{S}^1 \times [0, T] \to \mathbb{R}^2$ be a family of closed curves obtained by Lemma \ref{theorem:3.9}. 
Then 
\begin{equation}
\label{eq:4.8}
\gamma(\cdot,t) \in W^{2,p}(\mathcal{S}^1; \mathbb{R}^2) \quad \text{for all} \quad t\in [0,T].
\end{equation}
\end{lemma}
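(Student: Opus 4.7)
The plan is to upgrade the a.e.-in-$t$ regularity $\gamma(\cdot, t) \in W^{2,p}$ (already contained in the weak-$*$ convergence of Lemma~\ref{theorem:3.9}) to an everywhere-in-$t$ statement by exploiting the uniform $W^{2,p}$ bound on the piecewise-constant interpolants together with the uniform-in-$t$ convergence in $C^{1,\alpha}$ furnished by Lemma~\ref{theorem:3.11}.

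More concretely, I would proceed as follows. First, I recall that the minimality inequality \eqref{eq:3.2} chains to yield $\mathcal{E}_p(\gamma_{i,n}) \le \mathcal{E}_p(\gamma_0)$ for every $i,n$, and therefore the estimates \eqref{eq:3.17}--\eqref{eq:3.18} of Lemma~\ref{theorem:3.4} give a uniform bound
\begin{equation*}
\sup_{n \in \mathbb{N}} \sup_{t \in (0,T]} \| \tilde{\gamma}_n(\cdot, t) \|_{W^{2,p}(\mathcal{S}^1)} \le C^\ast,
\end{equation*}
where $\tilde{\gamma}_n$ is the piecewise-constant interpolation from Definition~\ref{theorem:3.3}. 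Next, for any fixed $t \in (0, T]$, Lemma~\ref{theorem:3.11} gives $\tilde{\gamma}_n(\cdot, t) \to \gamma(\cdot, t)$ in $C^{1,\alpha}(\mathcal{S}^1)$ as $n \to \infty$ (convergence in $L^\infty(0,T; C^{1,\alpha})$ implies pointwise-in-$t$ convergence).

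Now I combine these two facts via a standard Banach--Alaoglu / uniqueness-of-limits argument. Since $\{ \tilde{\gamma}_n(\cdot, t)\}_n$ is bounded in the reflexive space $W^{2,p}(\mathcal{S}^1;\mathbb{R}^2)$, there exists a subsequence and some $g \in W^{2,p}(\mathcal{S}^1;\mathbb{R}^2)$ such that $\tilde{\gamma}_{n_k}(\cdot, t) \rightharpoonup g$ weakly in $W^{2,p}(\mathcal{S}^1;\mathbb{R}^2)$. The weak $W^{2,p}$ limit is in particular a weak $W^{1,p}$ limit, so it must coincide with the strong $C^{1,\alpha} \hookrightarrow W^{1,p}$ limit $\gamma(\cdot, t)$. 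Hence $\gamma(\cdot, t) = g \in W^{2,p}(\mathcal{S}^1;\mathbb{R}^2)$, together with the quantitative bound $\| \gamma(\cdot, t) \|_{W^{2,p}(\mathcal{S}^1)} \le C^\ast$ via weak lower semicontinuity of the norm.

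For the remaining endpoint $t=0$, I invoke the convergence $\gamma_n \to \gamma$ in $C^{0,\beta}([0,T]; C^{1,\alpha}(\mathcal{S}^1))$ of Lemma~\ref{theorem:3.10}: since $\gamma_n(\cdot, 0) = \gamma_0 \in W^{2,p}(\mathcal{S}^1;\mathbb{R}^2)$ by construction, we obtain $\gamma(\cdot, 0) = \gamma_0 \in W^{2,p}(\mathcal{S}^1;\mathbb{R}^2)$. The only subtlety is the justification that we may pass to a $t$-dependent subsequence while retaining a single limit; this is automatic because the full sequence $\tilde{\gamma}_n(\cdot, t)$ already converges to $\gamma(\cdot, t)$ in $C^{1,\alpha}$, forcing every weak-$W^{2,p}$ cluster point to coincide, which in particular means that the whole sequence converges weakly in $W^{2,p}(\mathcal{S}^1;\mathbb{R}^2)$ to $\gamma(\cdot, t)$. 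I do not anticipate any genuine obstacle here; the argument is essentially a pointwise-in-$t$ Banach--Alaoglu extraction combined with the uniform energy bound from \eqref{eq:3.3}.
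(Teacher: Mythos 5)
Your argument is correct and follows essentially the same route as the paper: a uniform-in-$(i,n)$ bound in $W^{2,p}$ from Lemma \ref{theorem:3.4}, a pointwise-in-$t$ weak-compactness extraction, and identification of the weak $W^{2,p}$ limit with $\gamma(\cdot,t)$ via the strong $C^{1,\alpha}$ convergence. The only cosmetic difference is that the paper works with the piecewise-linear interpolant $\gamma_n$ (which is defined at $t=0$ and converges in $C^{0,\beta}([0,T];C^{1,\alpha})$ by Lemma \ref{theorem:3.10}, so no separate endpoint case is needed), whereas you use $\tilde{\gamma}_n$ and treat $t=0$ separately.
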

\begin{proof}
Fix $t\in [0,T]$ arbitrarily. By Lemma \ref{theorem:3.4} we have  
$$
\| \gamma_n(t) \|_{W^{2,p}(\mathcal{S}^1)} \le 2 \sup_{0 \le i \le n} \| \gamma_{i,n}\|_{W^{2,p}(\mathcal{S}^1)} \le 2 C^*, 
$$
where the constant $C^*>0$ depends only on $p$, $\lambda$, $\gamma_0$ and $T$ (more precisely, see~\eqref{eq:3.17}, \eqref{eq:3.18} and \eqref{eq:3.19}).  
Extracting a subsequence, we find $\Gamma \in W^{2,p}(\mathcal{S}^1; \mathbb{R}^2)$ such that 
$$
\gamma_n(\cdot, t) \rightharpoonup \Gamma \quad \text{weakly in} \quad W^{2,p}(\mathcal{S}^1; \mathbb{R}^2). 
$$
By way of the Rellich--Kondrachov compactness theorem, we also see that $\gamma_n(\cdot, t)$ converges to $\Gamma$ in $C^{1, \theta}(\mathcal{S}^1)$ with $\theta \in (0, 1-1/p)$.  
This together with \eqref{eq:3.45} implies that $\Gamma(\cdot)=\gamma(\cdot, t)$ in $C^{1, \theta}(\mathcal{S}^1)$. 
Since now $\Gamma \in W^{2,p}(\mathcal{S}^1;\mathbb{R}^2)$, we obtain 
\begin{equation}
\label{eq:4.9}
\int^1_0 \gamma(x, t) \cdot \partial_x \varphi(x) \, dx
 = \int^1_0 \Gamma(x) \cdot \partial_x \varphi(x) \, dx = - \int^1_0 \partial_x \Gamma(x) \cdot \varphi(x) \, dx 
\end{equation}
for $\varphi \in C^\infty(\mathcal{S}^1 ; \mathbb{R}^2)$. 
Similarly to \eqref{eq:4.9}, we obtain \eqref{eq:4.8}. 
Therefore Lemma~\ref{theorem:4.2} follows. 
\end{proof}

\begin{lemma} \label{theorem:4.3}
Let $\gamma : \mathcal{S}^1 \times [0, T] \to \mathbb{R}^2$ be a family of closed curves obtained by Lemma \ref{theorem:3.9}. 
Then 
\begin{equation} \label{eq:4.10}
\mathcal{E}_p(\gamma(\cdot,t)) \le \mathcal{E}_p(\gamma_0), \quad 
\Bigl[ \dfrac{(2 \pi)^p}{p \mathcal{E}_p(\gamma_0)} \Bigr]^{\frac{1}{p-1}} \le \mathcal{L}(\gamma(t)), 
\end{equation}
for a.e. $t \in [0, T]$. 
\end{lemma}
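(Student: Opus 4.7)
The plan is to obtain \eqref{eq:4.10} as a consequence of the energy monotonicity of the minimizing movement scheme combined with lower semicontinuity along the pointwise-in-time limit. Recall from \eqref{eq:3.3} in the proof of Lemma~\ref{theorem:3.1} that $\mathcal{E}_p(\gamma_{i,n}) \le \mathcal{E}_p(\gamma_0)$ for every $i$ and $n$, so the piecewise constant interpolation satisfies
\begin{equation*}
\mathcal{E}_p(\tilde{\gamma}_n(\cdot, t)) \le \mathcal{E}_p(\gamma_0) \quad \text{for a.e.~} t \in [0,T] \text{ and all } n \in \mathbb{N}.
\end{equation*}
This already gives, in particular, $\lambda \mathcal{L}(\tilde{\gamma}_n(\cdot, t)) \le \mathcal{E}_p(\gamma_0)$.

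Next, I would pass to the limit pointwise in $t$. By Lemma~\ref{theorem:3.4}, $\{\tilde{\gamma}_n(\cdot,t)\}$ is uniformly bounded in $W^{2,p}(\mathcal{S}^1)$, and by Lemma~\ref{theorem:3.11} it converges to $\gamma(\cdot,t)$ in $C^{1,\alpha}(\mathcal{S}^1)$ for a.e. $t$. A standard extraction argument (as in the proof of Lemma~\ref{theorem:4.2}) then yields $\tilde{\gamma}_n(\cdot,t) \rightharpoonup \gamma(\cdot,t)$ weakly in $W^{2,p}(\mathcal{S}^1)$ for a.e. $t$, with the entire sequence converging because the limit is determined. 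From the $C^{1,\alpha}$ convergence we obtain $\mathcal{L}(\tilde{\gamma}_n(\cdot,t)) \to \mathcal{L}(\gamma(\cdot,t))$, which immediately gives $\mathcal{L}(\gamma(\cdot,t)) \le \mathcal{E}_p(\gamma_0)/\lambda$.

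For the upper bound on $E_p$, I would exploit that $\gamma(\cdot,t), \tilde{\gamma}_n(\cdot,t) \in \mathcal{AC}$, so (as used in Lemma~\ref{theorem:3.1}) one may rewrite
\begin{equation*}
E_p(\tilde{\gamma}_n(\cdot,t)) = \frac{1}{p \mathcal{L}(\tilde{\gamma}_n(\cdot,t))^{2p-1}} \int_0^1 |\partial_x^2 \tilde{\gamma}_n|^p \, dx,
\end{equation*}
and similarly for $\gamma(\cdot,t)$. Combining the convergence $\mathcal{L}(\tilde{\gamma}_n) \to \mathcal{L}(\gamma)$ with the weak lower semicontinuity of the $L^p(\mathcal{S}^1)$ norm under weak $W^{2,p}$ convergence yields
\begin{equation*}
E_p(\gamma(\cdot,t)) \le \liminf_{n \to \infty} E_p(\tilde{\gamma}_n(\cdot,t)) \le \mathcal{E}_p(\gamma_0).
\end{equation*}

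Finally, the lower bound on $\mathcal{L}(\gamma(\cdot,t))$ is immediate from Lemma~\ref{theorem:2.2} applied to $\gamma(\cdot,t)$ together with the just-established bound $E_p(\gamma(\cdot,t)) \le \mathcal{E}_p(\gamma_0)$. The only mildly delicate point in the argument is verifying that the weak $W^{2,p}$ convergence indeed holds pointwise in $t$ for a.e.~$t$; this is essentially routine once Lemmas~\ref{theorem:3.4} and~\ref{theorem:3.11} are in hand, since the uniform $W^{2,p}$ bound and the $C^{1,\alpha}$ identification of the limit together force the entire sequence (not merely a $t$-dependent subsequence) to converge weakly in $W^{2,p}$.
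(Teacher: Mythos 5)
Your argument is correct and follows essentially the same route as the paper's proof: energy monotonicity \eqref{eq:3.3} along the scheme, pointwise-in-time weak $W^{2,p}$ convergence of $\tilde{\gamma}_n(\cdot,t)$ identified via the $C^{1,\alpha}$ convergence of Lemma~\ref{theorem:3.11}, lower semicontinuity of $E_p$ in the constant-parametrization form, and Lemma~\ref{theorem:2.2} for the lower length bound. No gaps.
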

\begin{proof}
Along the same line as in the proof of Lemma \ref{theorem:4.2}, we see that 
\begin{equation}
\label{eq:4.11}
\tilde{\gamma}_n(\cdot, t) \rightharpoonup \gamma(\cdot,t) \quad \text{weakly in} \quad W^{2,p}(\mathcal{S}^1) \quad \text{for a.e.} \quad t \in (0, T). 
\end{equation}
Since 
\begin{equation*}
E_p(\Gamma) = \dfrac{1}{p \mathcal{L}(\gamma)^{2p-1}} \int^1_0 |\partial^2_x \Gamma|^p \, dx 
\end{equation*}
for $\Gamma \in \mathcal{AC}$, we deduce from \eqref{eq:3.3}, \eqref{eq:4.11} and Lemma \ref{theorem:3.11} that 
\begin{equation}
\label{eq:4.12}
\mathcal{E}_p(\gamma(t)) \le \liminf_{n \to \infty} \bigl[ E_p(\tilde{\gamma}_n(t) + \lambda \mathcal{L}(\tilde{\gamma}_n(t)) \Bigr] \le \mathcal{E}_p(\gamma_0). 
\end{equation}
Moreover, combining Lemma \ref{theorem:2.2} with \eqref{eq:4.12}, we obtain the lower estimate on $\mathcal{L}(\gamma)$ as in \eqref{eq:4.10}. 
Therefore Lemma \ref{theorem:4.3} follows. 
\end{proof}

\begin{lemma} \label{theorem:4.4}
Let $\gamma : \mathcal{S}^1 \times [0, T] \to \mathbb{R}^2$ be a family of closed curves obtained by Lemma \ref{theorem:3.9}. 
Then 
\begin{align}
\label{eq:4.13}
\begin{split}
& \int^{\tau_2}_{\tau_1} \!\!\! \int^1_0 \Bigl[ \dfrac{|\partial^2_x \gamma|^{p-2} \partial^2_x \gamma}{\mathcal{L}(\gamma)^{2p-1}} \cdot \partial^2_x \eta 
    - \dfrac{2p-1}{p} \dfrac{|\partial^2_x \gamma|^{p} \partial_x \gamma}{\mathcal{L}(\gamma)^{2p+1}} \cdot \partial_x \eta \\
& \qquad \quad 
+ \dfrac{\lambda}{\mathcal{L}(\gamma)} \partial_x \gamma \cdot \partial_x \eta 
+ \mathcal{L}(\gamma) \partial_t \gamma \cdot \eta  
+ \mathcal{L}(\gamma) \partial_t \gamma \cdot  \Phi_1(\gamma,\eta) \partial_x \gamma \Bigr] \, dxdt=0   
\end{split}
\end{align}
for all $0 \le \tau_1 \le \tau_2 \le T$ and $\eta \in L^\infty(0,T;W^{2,p}(\mathcal{S}^1))$. 
\end{lemma}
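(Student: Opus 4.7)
The plan is to derive \eqref{eq:4.13} directly from the global weak identity \eqref{eq:4.1} in Lemma~\ref{theorem:4.1} by plugging in a time-localised test function. The essential observation is that each of the five terms in the integrand of \eqref{eq:4.1} is $\mathbb{R}$-linear in $\eta$. Linearity is obvious for the terms involving $\partial_x^2\eta$, $\partial_x\eta$, and $\eta$; for the last term, inspection of the definition \eqref{eq:3.31} of $\Phi_1$ shows that $\eta\mapsto \Phi_1(\gamma,\eta)$ is itself linear, as it is an integral of $\gamma_x\cdot\eta_x$.

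Given this, I would fix $0\le\tau_1<\tau_2\le T$ and $\eta\in L^\infty(0,T;W^{2,p}(\mathcal{S}^1))$, and set
\begin{equation*}
\eta^*(x,t) := \chi_{[\tau_1,\tau_2]}(t)\,\eta(x,t).
\end{equation*}
Multiplication by the bounded measurable function $\chi_{[\tau_1,\tau_2]}$ preserves the space $L^\infty(0,T;W^{2,p}(\mathcal{S}^1))$, so $\eta^*$ is admissible in \eqref{eq:4.1}. By linearity, $\Phi_1(\gamma,\eta^*)=\chi_{[\tau_1,\tau_2]}(t)\Phi_1(\gamma,\eta)$, and every occurrence of $\eta$, $\partial_x\eta$, $\partial_x^2\eta$ in the integrand of \eqref{eq:4.1} carries the factor $\chi_{[\tau_1,\tau_2]}(t)$. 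Consequently the whole integral collapses to $\int_{\tau_1}^{\tau_2}\!\!\int_0^1[\cdots]\,dx\,dt=0$, which is exactly \eqref{eq:4.13}.

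If one prefers to avoid using a discontinuous test function, one can instead approximate $\chi_{[\tau_1,\tau_2]}$ by a family of smooth cutoffs $\chi_\varepsilon$ with $0\le\chi_\varepsilon\le 1$ and $\chi_\varepsilon\to\chi_{[\tau_1,\tau_2]}$ pointwise a.e. in $(0,T)$, and pass to the limit $\varepsilon\to 0$ by the Lebesgue dominated convergence theorem. The required integrability of each term on $[0,T]\times[0,1]$ follows from $\gamma\in L^\infty(0,T;W^{2,p})\cap H^1(0,T;L^2)$ (Lemma~\ref{theorem:3.9}) together with the boundedness of $\mathcal{L}(\gamma(t))$ from above and below (Lemma~\ref{theorem:4.3}); the $\Phi_1$ term is additionally controlled via its pointwise bound by $\|\partial_x\gamma\|_{L^\infty}\|\partial_x\eta\|_{L^\infty}/\mathcal{L}(\gamma)$, which is uniformly finite in $t$. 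In any case there is no substantive obstacle here: Lemma~\ref{theorem:4.4} is really a bookkeeping consequence of the linearity of the weak formulation in the test function.
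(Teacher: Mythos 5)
Your argument is correct, and it is genuinely different from --- and more direct than --- the paper's own proof. The paper proceeds by contradiction: assuming \eqref{eq:4.13} fails with deficit $-\delta$ on $[0,\tau]$, it inserts the continuous piecewise-linear cutoff $\rho_\varepsilon$ (equal to $1$ on $[0,\tau]$, decaying linearly to $0$ on $[\tau,\tau+\varepsilon]$) into \eqref{eq:4.1} and then spends most of the proof estimating the five contributions from the transition strip $[\tau,\tau+\varepsilon]$ by $C\sqrt{\varepsilon}$, using $\gamma\in L^\infty(0,T;W^{2,p})\cap H^1(0,T;L^2)$, Lemma~\ref{theorem:3.8} and Lemma~\ref{theorem:4.3}, arriving at $0\le-\delta+C\sqrt{\varepsilon}$. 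Your observation short-circuits all of this: since the test class in Lemma~\ref{theorem:4.1} is $L^\infty(0,T;W^{2,p}(\mathcal{S}^1))$ with no continuity in time required, and since the weak form contains no time derivative of $\eta$ while $\eta\mapsto\Phi_1(\gamma,\eta)$ is linear and commutes with multiplication by a function of $t$ alone, the choice $\eta^*=\chi_{[\tau_1,\tau_2]}\eta$ is admissible and immediately localises the identity. The only point you should make explicit is the same integrability check you sketch in your fallback paragraph: each of the five terms of the integrand is in $L^1(\mathcal{S}^1\times(0,T))$ for a fixed admissible $\eta$ (which is implicit in the very statement of \eqref{eq:4.1}), so that $\int_0^T\chi_{[\tau_1,\tau_2]}[\cdots]\,dt=\int_{\tau_1}^{\tau_2}[\cdots]\,dt$ is legitimate. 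Your smooth-cutoff fallback with dominated convergence is closer in spirit to what the paper actually does, but with a qualitative limit in place of the paper's quantitative $O(\sqrt{\varepsilon})$ bound; either version is sound. What the paper's longer route buys is robustness in case one only trusts \eqref{eq:4.1} for test functions continuous in time; given the stated hypothesis of Lemma~\ref{theorem:4.1}, that caution is unnecessary and your proof is the cleaner one.
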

\begin{proof}
For the simplicity, we prove the case of $\tau_1 = 0$. 
Assume that \eqref{eq:4.13} does not hold. 
Then we find $0 < \tau < T$ and $\eta \in L^\infty(0,T;W^{2,p}(\mathcal{S}^1))$ such that 
\begin{align*}
& -\delta:= \int^{\tau}_{0} \!\!\! \int^1_0 \Bigl[ \dfrac{|\partial^2_x \gamma|^{p-2} \partial^2_x \gamma}{\mathcal{L}(\gamma)^{2p-1}} \cdot \partial^2_x \eta 
    - \dfrac{2p-1}{p} \dfrac{|\partial^2_x \gamma|^{p} \partial_x \gamma}{\mathcal{L}(\gamma)^{2p+1}} \cdot \partial_x \eta \\
& \qquad \qquad \qquad
+ \dfrac{\lambda}{\mathcal{L}(\gamma)} \partial_x \gamma \cdot \partial_x \eta 
+ \mathcal{L}(\gamma) \partial_t \gamma \cdot \eta  
+ \mathcal{L}(\gamma) \partial_t \gamma \cdot  \Phi_1(\gamma,\eta) \partial_x \gamma \Bigr] \, dxdt < 0.   
\end{align*}
For $0 < \varepsilon < 1$ we define $\rho_\varepsilon \in H^1(0,T)$ by 
\begin{align*}
\rho_\varepsilon(t):= 
\begin{cases}
1 & \qquad \text{if} \quad 0 \le t \le \tau,  \\
-\dfrac{t-\tau}{\varepsilon} + 1 & \qquad \text{if} \quad \tau < t \le \tau + \varepsilon,  \\
0 & \qquad \text{if} \quad \tau + \varepsilon < t \le T,  
\end{cases}
\end{align*}
and set $\eta_\varepsilon := \rho_\varepsilon \eta$. 
Taking $\eta_\varepsilon$ as $\eta$ in \eqref{eq:4.1} we observe from \eqref{eq:4.2} that  
\begin{equation}
\label{eq:4.14}
\begin{aligned}
0 &= \int^T_0 \rho_\varepsilon \int^1_0 \Bigl[ \dfrac{|\partial^2_x \gamma|^{p-2} \partial^2_x \gamma}{\mathcal{L}(\gamma)^{2p-1}} \cdot \partial^2_x \eta 
    - \dfrac{2p-1}{p} \dfrac{|\partial^2_x \gamma|^{p} \partial_x \gamma}{\mathcal{L}(\gamma)^{2p+1}} \cdot \partial_x \eta \\
& \qquad \quad 
+ \dfrac{\lambda}{\mathcal{L}(\gamma)} \partial_x \gamma \cdot \partial_x \eta 
+ \mathcal{L}(\gamma) \partial_t \gamma \cdot \eta  
+ \mathcal{L}(\gamma) (\partial_t \gamma \cdot \partial_x \gamma) \Phi_1(\gamma,\eta) \Bigr] \, dxdt  \\
&\le -\delta 
+ \int^{\tau+\varepsilon}_\tau \!\!\! \int^1_0 \Bigl[ \dfrac{|\partial^2_x \gamma|^{p-1}}{\mathcal{L}(\gamma)^{2p-1}} | \partial^2_x \eta | 
    + \dfrac{2p-1}{p} \dfrac{|\partial^2_x \gamma|^{p}}{\mathcal{L}(\gamma)^{2p}} | \partial_x \eta | \\
& \qquad \quad 
+ \lambda | \partial_x \eta |
+ \mathcal{L}(\gamma) | \partial_t \gamma | | \eta | 
+ \mathcal{L}(\gamma)^2 |\partial_t \gamma| |\Phi_1(\gamma,\eta)| \Bigr] \, dxdt. 
\end{aligned}
\end{equation}
Since $\gamma \in L^\infty(0,T; W^{2,p}(\mathcal{S}^1))$ and $\eta \in L^\infty(0,T;W^{2,p}(\mathcal{S}^1))$, by Lemma \ref{theorem:4.3} we have   
\begin{equation}
\label{eq:4.15}
\int^{\tau+\varepsilon}_\tau \!\!\! \int^1_0 \dfrac{|\partial^2_x \gamma|^{p-1}}{\mathcal{L}(\gamma)^{2p-1}} | \partial^2_x \eta | \, dx dt 
\le C \int^{\tau+\varepsilon}_\tau \| \partial^2_x \gamma \|_{L^p(\mathcal{S}^1)}^{p-1} \| \partial^2_x \eta \|_{L^p(\mathcal{S}^1)} \, dt 
\le C \varepsilon. 
\end{equation}
Thanks to Lemma \ref{theorem:3.8}, similarly we obtain 
\begin{equation}
\label{eq:4.16}
\begin{aligned}
&\int^{\tau+\varepsilon}_\tau \!\!\! \int^1_0 \dfrac{|\partial^2_x \gamma|^{p}}{\mathcal{L}(\gamma)^{2p}} | \partial_x \eta | \, dx dt \\
& \quad  \le C \int^{\tau+\varepsilon}_\tau \| \partial^2_x \gamma \|_{L^\infty(\mathcal{S}^1)} \| \partial^2_x \gamma \|_{L^p(\mathcal{S}^1)}^{p-1} \| \partial_x \eta \|_{L^p(\mathcal{S}^1)} \, dt \\
& \quad \le C \Bigl[ \int^{\tau+\varepsilon}_\tau \| \partial^2_x \gamma \|_{L^\infty(\mathcal{S}^1)}^2 \, dt \Bigr]^{\frac{1}{2}}
         \Bigl[ \int^{\tau+\varepsilon}_\tau \| \partial_x \eta \|_{L^p(\mathcal{S}^1)}^2 \, dt \Bigr]^{\frac{1}{2}} 
\le C \sqrt{\varepsilon}.   
\end{aligned}
\end{equation}
Since $\eta \in L^\infty(0,T;W^{2,p}(\mathcal{S}^1))$, we have 
\begin{equation}
\label{eq:4.17}
\int^{\tau+\varepsilon}_\tau \!\!\! \int^1_0 \lambda | \partial_x \eta | \, dx dt 
\le \lambda \int^{\tau+\varepsilon}_\tau \| \partial_x \eta \|_{L^p(\mathcal{S}^1)} \, dt 
\le C \varepsilon. 
\end{equation}
It follows from $\gamma \in H^1(0,T;L^2(\mathcal{S}^1))$ that 
\begin{equation}
\label{eq:4.18}
\begin{aligned}
& \int^{\tau+\varepsilon}_\tau \!\!\! \int^1_0 \mathcal{L}(\gamma) | \partial_t \gamma | | \eta | \, dx dt 
  \le C \int^{\tau+\varepsilon}_\tau \| \partial_t \gamma \|_{L^2(\mathcal{S}^1)} \| \eta \|_{L^2(\mathcal{S}^1)} \, dt \\
& \qquad \le C \Bigl[ \int^{\tau+\varepsilon}_\tau \| \partial_t \gamma \|_{L^2(\mathcal{S}^1)}^2 \, dt \Bigr]^{\frac{1}{2}} 
           \Bigl[ \int^{\tau+\varepsilon}_\tau \| \eta \|_{L^2(\mathcal{S}^1)}^2 \, dt \Bigr]^{\frac{1}{2}} 
\le C \sqrt{\varepsilon}. 
\end{aligned}
\end{equation}
Since $|\Phi_1(\gamma, \eta)| \le C \| \eta_x \|_{L^1(\mathcal{S}^1)}$, along the same line as in \eqref{eq:4.18}, we see that 
\begin{equation}
\label{eq:4.19}
\int^{\tau+\varepsilon}_\tau \!\!\! \int^1_0 \mathcal{L}(\gamma)^2 |\partial_t \gamma| |\Phi_1(\gamma,\eta)| \, dxdt 
\le C \sqrt{\varepsilon}.  
\end{equation}
Plugging \eqref{eq:4.15}, \eqref{eq:4.16}, \eqref{eq:4.17}, \eqref{eq:4.18} and \eqref{eq:4.19} into \eqref{eq:4.14}, we observe that  
\begin{equation*}
0 \le -\delta + C \sqrt{\varepsilon}.  
\end{equation*}
This clearly leads a contradiction for $0 < \varepsilon < (\delta/2C)^2$. 
For the case of $\tau_1 \in (0,T)$, setting 
\begin{align*}
\rho_\varepsilon(t):= 
\begin{cases}
\frac{1}{\varepsilon}(t-\tau_1) + 1 & \qquad \text{if} \quad \tau_1 - \varepsilon \le t < \tau_1,  \\
1 & \qquad \text{if} \quad \tau_1 \le t \le \tau_2,  \\
-\frac{1}{\varepsilon}(t-\tau_2) + 1 & \qquad \text{if} \quad \tau_2 < t \le \tau_2 + \varepsilon,  \\
0 & \qquad \text{otherwise},  
\end{cases}
\end{align*}
we obtain \eqref{eq:4.13} along the same line as above. 
Therefore Lemma \ref{theorem:4.4} follows. 
\end{proof}

\begin{lemma} \label{theorem:4.5}
Let $\gamma : \mathcal{S}^1 \times [0, T] \to \mathbb{R}^2$ be a family of closed curves obtained by Lemma \ref{theorem:3.9}. 
Then 
\begin{equation*}
\partial_x (|\partial^2_x \gamma|^{p-2} \partial^2_x \gamma) \in L^2(0,T; L^2(\mathcal{S}^1)). 
\end{equation*}
\end{lemma}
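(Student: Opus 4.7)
The plan is to deduce the claim almost directly from what has already been established in Section~\ref{section:3}. The second estimate in Lemma~\ref{theorem:3.8} furnishes a uniform bound
\[
\int_0^T \bigl\| \partial_x \bigl( |\partial_x^2 \tilde\gamma_n|^{p-2} \partial_x^2 \tilde\gamma_n \bigr) \bigr\|_{L^2(\mathcal{S}^1)}^2 \, dt \le C(T+1),
\]
so the sequence $\{|\partial_x^2 \tilde\gamma_n|^{p-2} \partial_x^2 \tilde\gamma_n\}$ is bounded in $L^2(0,T;H^1(\mathcal{S}^1))$. By reflexivity, up to a subsequence it converges weakly in $L^2(0,T;H^1(\mathcal{S}^1))$ to some $w$; this is precisely the subsequence selected in \eqref{eq:3.53} in the proof of Lemma~\ref{theorem:3.12}, where a Minty-type argument based on the convexity of the functional $F$ identifies the weak limit as $w = |\partial_x^2 \gamma|^{p-2} \partial_x^2 \gamma$.

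Since weak convergence in $L^2(0,T;H^1(\mathcal{S}^1))$ implies weak convergence of the spatial derivative in $L^2(0,T;L^2(\mathcal{S}^1))$, the weak lower semicontinuity of the norm yields
\[
\bigl\| \partial_x\bigl( |\partial_x^2 \gamma|^{p-2} \partial_x^2 \gamma \bigr) \bigr\|_{L^2(0,T;L^2(\mathcal{S}^1))} \le \liminf_{n \to \infty} \bigl\| \partial_x\bigl( |\partial_x^2 \tilde\gamma_n|^{p-2} \partial_x^2 \tilde\gamma_n \bigr) \bigr\|_{L^2(0,T;L^2(\mathcal{S}^1))} \le C(T+1)^{1/2}.
\]
In particular $\partial_x(|\partial_x^2 \gamma|^{p-2} \partial_x^2 \gamma) \in L^2(0,T; L^2(\mathcal{S}^1))$. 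Because $p \ge 2$ forces $\tfrac{p}{p-1} \le 2$, the continuous embedding $L^2(\mathcal{S}^1) \hookrightarrow L^{p/(p-1)}(\mathcal{S}^1)$ (a consequence of H\"older's inequality on the bounded domain $\mathcal{S}^1$) finishes the proof.

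There is essentially no new obstacle to overcome: the uniform a priori estimate, the identification of the weak limit, and the inclusion between Lebesgue spaces have all been set up. The only point worth being careful about is invoking the correct subsequence so that the weak limit $w$ from Lemma~\ref{theorem:3.8} agrees with the limit identified in Lemma~\ref{theorem:3.12}; this is automatic by choosing (if necessary) a further subsequence along which both convergences hold simultaneously.
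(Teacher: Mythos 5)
Your proposal is essentially correct but follows a genuinely different route from the paper. The paper does not pass the bound of Lemma~\ref{theorem:3.8} to the limit; instead it works entirely at the level of the limit curve $\gamma$: starting from the localized weak form of Lemma~\ref{theorem:4.4} and the Lebesgue differentiation theorem it obtains a pointwise-in-time Euler--Lagrange identity, tests it with the primitives $\varphi_1,\varphi_2$ of Lemma~\ref{theorem:3.7}, and reads off, by duality against $\psi\in L^p$, an a.e.-$t$ bound for $\|\partial_x(|\partial^2_x\gamma|^{p-2}\partial^2_x\gamma)\|_{L^{p/(p-1)}}$ in terms of $\|\partial^2_x\gamma\|_{L^\infty}$ and $\|\partial_t\gamma\|_{L^2}$, which is then squared and integrated in time. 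This is longer than your argument but self-contained at the limit level, naturally produces the exponent $p/(p-1)$, and yields the auxiliary estimate \eqref{eq:4.22} on $\|\partial^2_x\gamma\|^{p-1}_{L^\infty}$ that the paper reuses later (e.g.\ in the proof of Theorem~\ref{theorem:1.3}). Your argument, by contrast, buys a stronger conclusion ($L^2(0,T;L^2)$ rather than $L^2(0,T;L^{p/(p-1)})$) at essentially no cost, since the uniform bound and the weak limit $w$ of \eqref{eq:3.53} are already in place.

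One step in your write-up needs to be made precise: the proof of Lemma~\ref{theorem:3.12} does \emph{not} identify $w$ with $|\partial^2_x\gamma|^{p-2}\partial^2_x\gamma$ pointwise; it only shows that the two have the same pairing with $\partial^2_x\eta$ for all $\eta\in L^p(0,T;W^{2,p}(\mathcal{S}^1))$. Since on $\mathcal{S}^1$ the range of $\partial^2_x$ on $W^{2,p}$ consists exactly of the mean-zero $L^p$ functions, this determines $w-|\partial^2_x\gamma|^{p-2}\partial^2_x\gamma$ only up to an additive vector depending on $t$ alone. That is enough for your purposes -- it gives $\partial_x\bigl(|\partial^2_x\gamma|^{p-2}\partial^2_x\gamma\bigr)=\partial_x w$ in the sense of distributions for a.e.\ $t$, and $\partial_x w\in L^2(0,T;L^2(\mathcal{S}^1))$ -- but you should say so rather than assert equality of $w$ with the nonlinear expression itself. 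With that clarification, and the observation $L^2(\mathcal{S}^1)\hookrightarrow L^{p/(p-1)}(\mathcal{S}^1)$ for $p\ge 2$, your proof is complete.
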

\begin{proof}
Fix $\tau \in (0, T)$ arbitrarily and let $\varepsilon > 0$ small enough. 
By Lemma \ref{theorem:4.4} we have 
\begin{equation}
\label{eq:4.20}
\begin{aligned}
& \int^{\tau + \varepsilon}_{\tau} \!\!\! \int^1_0 \Bigl[ \dfrac{|\partial^2_x \gamma|^{p-2} \partial^2_x \gamma}{\mathcal{L}(\gamma)^{2p-1}} \cdot \partial^2_x \eta 
    - \dfrac{2p-1}{p} \dfrac{|\partial^2_x \gamma|^{p} \partial_x \gamma}{\mathcal{L}(\gamma)^{2p+1}} \cdot \partial_x \eta \\
& \qquad \qquad 
+ \dfrac{\lambda}{\mathcal{L}(\gamma)} \partial_x \gamma \cdot \partial_x \eta 
+ \mathcal{L}(\gamma) \partial_t \gamma \cdot \eta  
+ \mathcal{L}(\gamma) \partial_t \gamma \cdot \Phi_1(\gamma,\eta) \partial_x \gamma \Bigr] \, dxdt=0    
\end{aligned}
\end{equation}
for all $\eta \in L^\infty(0,T;W^{2,p}(\mathcal{S}^1))$. 
From now on we take $\varphi \in W^{2,p}(\mathcal{S}^1)$ as $\eta$ in \eqref{eq:4.20}. 
This together with the Lebesgue differentiation theorem implies that 
\begin{align}
\label{eq:4.21}
\begin{aligned}
&\int^1_0 \Bigl[ \dfrac{|\partial^2_x \gamma|^{p-2} \partial^2_x \gamma}{\mathcal{L}(\gamma)^{2p-1}} \cdot \partial^2_x \varphi 
    - \dfrac{2p-1}{p} \dfrac{|\partial^2_x \gamma|^{p} \partial_x \gamma}{\mathcal{L}(\gamma)^{2p+1}} \cdot \partial_x \varphi \\
& \qquad  
+ \dfrac{\lambda}{\mathcal{L}(\gamma)} \partial_x \gamma \cdot \partial_x \varphi 
+ \mathcal{L}(\gamma) \partial_t \gamma \cdot \varphi  
+ \mathcal{L}(\gamma) \partial_t \gamma \cdot \Phi_1(\gamma,\varphi) \partial_x \gamma \Bigr] \, dx=0    
\end{aligned}
\end{align}
for a.e. $t \in (0, T)$. 
Fix $\psi \in C^\infty(\mathcal{S}^1)$ arbitrarily and set $\varphi_1$ as in Lemma \ref{theorem:3.7}. 
Taking $\varphi_1$ as $\varphi$ in \eqref{eq:4.21}, we obtain 
\begin{align*}
& \int^1_0 \dfrac{|\partial^2_x \gamma|^{p-2} \partial^2_x \gamma}{\mathcal{L}(\gamma)^{2p-1}} \cdot \psi \, dx \\
& = -2\int^1_0 \dfrac{|\partial^2_x \gamma|^{p-2} \partial^2_x \gamma}{\mathcal{L}(\gamma)^{2p-1}} \cdot \beta \, dx 
          + \dfrac{2p-1}{p} \int^1_0 \dfrac{|\partial^2_x \gamma|^{p} \partial_x \gamma}{\mathcal{L}(\gamma)^{2p+1}} \cdot \partial_x \varphi_1 \, dx \\
& \quad  - \dfrac{\lambda}{\mathcal{L}(\gamma)} \int^1_0 \partial_x \gamma \cdot \partial_x \varphi_1 \, dx  
          - \mathcal{L}(\gamma) \int^1_0 \partial_t \gamma \cdot \varphi_1 \, dx \\
& \quad - \mathcal{L}(\gamma) \int^1_0 \partial_t \gamma \cdot \Phi_1(\gamma,\varphi_1) \partial_x \gamma \, dx 
 =: I_1 + I_2 + I_3 + I_4 + I_5. 
\end{align*}
By \eqref{eq:4.2} and Lemmas \ref{theorem:3.7} and \ref{theorem:4.3} we have 
\begin{align*}
|I_1| &\le \dfrac{2}{\mathcal{L}(\gamma)^{2p-1}} |\beta| \|\partial^2_x \gamma\|^{p-1}_{L^p(\mathcal{S}^1)} 
      \le C \|\partial^2_x \gamma\|^{p-1}_{L^p(\mathcal{S}^1)} \|\psi\|_{L^1(\mathcal{S}^1)}, \\
|I_2| &\le \dfrac{2p-1}{p \mathcal{L}(\gamma)^{2p}} \|\partial^2_x \gamma\|^{p}_{L^p(\mathcal{S}^1)} \|\varphi_1\|_{C^1(\mathcal{S}^1)} 
      \le C \|\partial^2_x \gamma\|^{p}_{L^p(\mathcal{S}^1)} \|\psi\|_{L^1(\mathcal{S}^1)}, \\
|I_3| &\le \lambda  \|\varphi_1\|_{C^1(\mathcal{S}^1)} \le C \|\psi\|_{L^1(\mathcal{S}^1)}, \\
|I_4| &\le \mathcal{L}(\gamma) \| \partial_t \gamma \|_{L^1(\mathcal{S}^1)} \| \varphi_1\|_{C(\mathcal{S}^1)} 
      \le C \mathcal{L}(\gamma) \| \partial_t \gamma \|_{L^1(\mathcal{S}^1)} \| \psi\|_{L^1(\mathcal{S}^1)}. 
\end{align*}
Since 
\begin{align*}
\|\Phi_1(\gamma, \varphi_1)\|_{L^\infty(\mathcal{S}^1)} \le \dfrac{2}{\mathcal{L}(\gamma)} \| \varphi_1\|_{C^1(\mathcal{S}^1)} \le C \|\psi\|_{L^1(\mathcal{S}^1)}, 
\end{align*}
we also obtain 
\begin{align*}
|I_5| \le \mathcal{L}(\gamma)^2 \|\Phi_1(\gamma, \varphi_1)\|_{L^\infty(\mathcal{S}^1)} \| \partial_t \gamma \|_{L^1(\mathcal{S}^1)} 
      \le C \mathcal{L}(\gamma) \|\partial_t \gamma \|_{L^1(\mathcal{S}^1)} \|\psi\|_{L^1(\mathcal{S}^1)}.  
\end{align*}
Thus we see that 
\begin{align*}
\Bigl| \int^1_0 \dfrac{|\partial^2_x \gamma|^{p-2} \partial^2_x \gamma}{\mathcal{L}(\gamma)^{2p-1}} \cdot \psi \, dx \Bigr| 
 \le C\bigl( 1 + \|\partial^2_x \gamma\|^{p}_{L^p(\mathcal{S}^1)}   
               + \mathcal{L}(\gamma) \| \partial_t \gamma \|_{L^1(\mathcal{S}^1)}  \bigr) \|\psi\|_{L^1(\mathcal{S}^1)},   
\end{align*}
and then 
\begin{equation}
\label{eq:4.22}
\begin{aligned}
\|\partial^2_x \gamma\|^{p-1}_{L^\infty(\mathcal{S}^1)}
 \le C\bigl( 1 + \|\partial^2_x \gamma\|^{p}_{L^p(\mathcal{S}^1)}  
                    + \mathcal{L}(\gamma) \| \partial_t \gamma \|_{L^1(\mathcal{S}^1)}  \bigr) 
\end{aligned}
\end{equation}
for a.e. $t \in (0, T)$. 

Set $\varphi_2$ as in Lemma \ref{theorem:3.7}. 
Taking $\varphi_2$ as $\varphi$ in \eqref{eq:4.21}, we have 
\begin{align*}
& \int^1_0 \dfrac{|\partial^2_x \gamma|^{p-2} \partial^2_x \gamma}{\mathcal{L}(\gamma)^{2p-1}} \cdot \partial_x \psi \, dx \\
& \quad = \dfrac{2p-1}{p}\int^1_0 \dfrac{|\partial^2_x \gamma|^{p} \partial_x \gamma}{\mathcal{L}(\gamma)^{2p+1}} \cdot \partial_x \varphi_2 \, dx 
               - \int^1_0 \dfrac{\lambda}{\mathcal{L}(\gamma)} \partial_x \gamma \cdot \partial_x \varphi_2 \, dx \\
& \qquad \quad - \int^1_0 \mathcal{L}(\gamma) \partial_t \gamma \cdot \varphi_2 \, dx   
               - \int^1_0 \mathcal{L}(\gamma) \partial_t \gamma \cdot \Phi_1(\gamma,\varphi_2) \partial_x \gamma \, dx  \\
& \quad =: I'_1 + I'_2 + I'_3 + I'_4. 
\end{align*}
Along the same line as above, we have  
\begin{align*}
|I'_1| &\le \dfrac{2p-1}{p \mathcal{L}(\gamma)^{2p}} \| \partial^2_x \gamma\|_{L^\infty(\mathcal{S}^1)}^{p-1} 
                                                     \|\partial^2_x \gamma\|_{L^2(\mathcal{S}^1)} \|\partial_x \varphi_2\|_{L^2(\mathcal{S}^1)} \\
       &\le C \| \partial^2_x \gamma\|_{L^\infty(\mathcal{S}^1)}^{p-1} \| \partial^2_x \gamma\|_{L^2(\mathcal{S}^1)} \|\psi\|_{L^2(\mathcal{S}^1)}, \\
|I'_2| &\le \lambda \| \partial_x \varphi_2 \|_{L^1(\mathcal{S}^1)}
       \le C \|\psi\|_{L^1(\mathcal{S}^1)}, \\
|I'_3| &\le \mathcal{L}(\gamma) \| \partial_t \gamma \|_{L^1(\mathcal{S}^1)} \| \varphi_2 \|_{L^\infty(\mathcal{S}^1)}
       \le C \mathcal{L}(\gamma) \| \partial_t \gamma \|_{L^1(\mathcal{S}^1)} \| \psi \|_{L^1(\mathcal{S}^1)}. 
\end{align*}
Since
\begin{equation*}
\|\Phi_1(\gamma, \varphi_2)\|_{L^\infty(\mathcal{S}^1)} 
 \le \dfrac{2}{\mathcal{L}(\gamma)} \|\partial_x \varphi_2 \|_{L^1(\mathcal{S}^1)} \le C \|\psi\|_{L^1(\mathcal{S}^1)}, 
\end{equation*}
we also obtain 
\begin{equation*}
|I'_4| \le C \mathcal{L}(\gamma) \| \partial_t \gamma \|_{L^1(\mathcal{S}^1)} \|\psi\|_{L^1(\mathcal{S}^1)}. 
\end{equation*}
Thus we see that 
\begin{align*}
& \Bigl| \int^1_0 \dfrac{|\partial^2_x \gamma|^{p-2} \partial^2_x \gamma}{\mathcal{L}(\gamma)^{2p-1}} \cdot \partial_x \psi \, dx \Bigr| \\
& \quad \le C \Bigl(1 + \|\partial^2_x \gamma\|_{L^\infty(\mathcal{S}^1)}^{p-1} \|\partial^2_x \gamma\|_{L^2(\mathcal{S}^1)} 
         + \mathcal{L}(\gamma) \| \partial_t \gamma \|_{L^1(\mathcal{S}^1)} \Bigr) \| \psi \|_{L^2(\mathcal{S}^1)},  
\end{align*}
and then 
\begin{equation}
\label{eq:4.23}
\begin{aligned}
& \| \partial_x (|\partial^2_x \gamma|^{p-2} \partial^2_x \gamma) \|_{L^2(\mathcal{S}^1)} \\
& \qquad \le C \bigl(1 + \|\partial^2_x \gamma\|_{L^\infty(\mathcal{S}^1)}^{p-1} \|\partial^2_x \gamma\|_{L^2(\mathcal{S}^1)} 
         + \mathcal{L}(\gamma) \| \partial_t \gamma \|_{L^1(\mathcal{S}^1)} \bigr) 
\end{aligned}
\end{equation}
for a.e. $t \in (0, T)$. 
This together with \eqref{eq:4.22} and Lemmas \ref{theorem:3.4} and \ref{theorem:4.3} implies that 
\begin{align*}
& \int^T_0 \| \partial_x (|\partial^2_x \gamma|^{p-2} \partial^2_x \gamma) \|^2_{L^2(\mathcal{S}^1)} \, dt \\
& \quad \le C T + C \int^T_0 \| \partial^2_x \gamma\|^{2(p-1)}_{L^\infty(\mathcal{S}^1)} \, dt + C \int^T_0 \mathcal{L}(\gamma)\| \partial_t \gamma \|^2_{L^2(\mathcal{S}^1)} \, dt < \infty. 
\end{align*}
Therefore Lemma \ref{theorem:4.5} follows. 
\end{proof}

\begin{lemma}
\label{theorem:4.6}
Let $\gamma : \mathcal{S}^1 \times [0, T] \to \mathbb{R}^2$ be a family of closed curves obtained by Lemma \ref{theorem:3.9}. 
Then 
\begin{equation*}
\int^{\tau}_0 \!\!\! \int^1_0 \mathcal{L}(\gamma) \partial_t \gamma \cdot \partial_x \gamma \, dx dt =0   
\end{equation*}
for all $\tau \in [0, T]$. 
\end{lemma}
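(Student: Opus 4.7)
The strategy is to test Lemma~\ref{theorem:4.4} with the tangential variation $\eta(x,t) := \rho(x)\partial_x\gamma(x,t)$ for arbitrary $\rho \in C^\infty(\mathcal{S}^1;\mathbb{R})$. Before inserting this $\eta$, one has to confront a regularity shortfall: since $\gamma \in L^\infty(0,T; W^{2,p})$, the product $\rho\partial_x\gamma$ only lies in $L^\infty(0,T; W^{1,p})$, one derivative below what Lemma~\ref{theorem:4.4} demands. The plan is to first rewrite the leading term of Lemma~\ref{theorem:4.4} using the integration by parts justified by the bound $\partial_x(|\partial_x^2\gamma|^{p-2}\partial_x^2\gamma)\in L^2(0,T;L^{p/(p-1)})$ from Lemma~\ref{theorem:4.5}, namely
\begin{equation*}
\int_0^1 \frac{|\partial_x^2\gamma|^{p-2}\partial_x^2\gamma}{\mathcal{L}(\gamma)^{2p-1}}\cdot\partial_x^2\eta\,dx
 = -\int_0^1 \frac{\partial_x\bigl(|\partial_x^2\gamma|^{p-2}\partial_x^2\gamma\bigr)}{\mathcal{L}(\gamma)^{2p-1}}\cdot\partial_x\eta\,dx.
\end{equation*}
After this recasting only $\partial_x\eta$ appears, so the identity extends to $\eta\in L^\infty(0,T;W^{1,p})$ by density (equivalently, by mollifying $\gamma$ in $x$, applying Lemma~\ref{theorem:4.4} to $\eta^\varepsilon=\rho\partial_x\gamma^\varepsilon$, and passing to the limit $\varepsilon\to 0$).

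Next I would verify that the three Euler--Lagrange terms of Lemma~\ref{theorem:4.4} cancel pointwise for this choice. Expanding $\partial_x\eta = \rho'\partial_x\gamma + \rho\partial_x^2\gamma$ and using the constant-speed orthogonality $\partial_x\gamma\cdot\partial_x^2\gamma = 0$ (from $\gamma(\cdot,t)\in\mathcal{AC}$), the weak identity $\partial_x(|\partial_x^2\gamma|^{p-2}\partial_x^2\gamma)\cdot\partial_x\gamma = -|\partial_x^2\gamma|^p$ (obtained by differentiating $|\partial_x^2\gamma|^{p-2}\partial_x^2\gamma\cdot\partial_x\gamma = 0$), together with $|\partial_x^2\gamma|^{p-2}\partial_x^2\gamma\cdot\partial_x^3\gamma = \frac{1}{p}\partial_x|\partial_x^2\gamma|^p$ interpreted distributionally, reduces the leading term to $\frac{2p-1}{p\mathcal{L}(\gamma)^{2p-1}}\int_0^1\rho'|\partial_x^2\gamma|^p\,dx$. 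The second term of Lemma~\ref{theorem:4.4} contributes its exact negative, while the $\lambda$-term collapses to $\lambda\mathcal{L}(\gamma)\int_0^1\rho'\,dx = 0$ by periodicity. Hence the Euler--Lagrange part of Lemma~\ref{theorem:4.4} vanishes identically.

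A direct calculation from definition~\eqref{eq:3.31}, using $\partial_x\gamma\cdot(\rho\partial_x\gamma)_x = \rho'(x)\mathcal{L}(\gamma)^2$ and $\int_0^1\rho'\,dx = 0$, yields
$\Phi_1(\gamma,\rho\partial_x\gamma) = \rho(0)-\rho(x)$,
so the sum of the fourth and fifth terms of Lemma~\ref{theorem:4.4} telescopes to $\mathcal{L}(\gamma)\rho(0)\,\partial_t\gamma\cdot\partial_x\gamma$. Lemma~\ref{theorem:4.4} then forces
\begin{equation*}
\rho(0)\int_0^T\!\!\int_0^1 \mathcal{L}(\gamma)\partial_t\gamma\cdot\partial_x\gamma\,dx\,dt = 0,
\end{equation*}
which gives the statement for $\rho\equiv 1$. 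To upgrade this to arbitrary $\rho\in C^\infty(\mathcal{S}^1;\mathbb{R})$ I would repeat the argument with the time-localised test function $\sigma(t)\rho(x)\partial_x\gamma(x,t)$ for $\sigma\in C_c^\infty(0,T)$ (yielding the same identity pointwise a.e.\ in $t$), and then exploit the freedom in the base point of the formula for $\Phi_1$ (the convention $\Phi_1(0)=0$ is arbitrary; replacing $0$ by $x_0$ and varying $x_0$ covers all of $C^\infty(\mathcal{S}^1;\mathbb{R})$).

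The main obstacle I foresee is the regularisation step at the start: one must pass to the limit in the mollified identity while handling both the nonlinear leading term (via Lemma~\ref{theorem:4.5}) and the non-local quantity $\Phi_1(\gamma,\eta^\varepsilon)$, whose continuity in $\eta$ and $\gamma$ must be established carefully. All other manipulations are pointwise algebraic identities that follow from the constant-speed constraint together with periodicity on $\mathcal{S}^1$.
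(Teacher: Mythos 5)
Your handling of the Euler--Lagrange terms is correct and tracks the paper's own computation: after the integration by parts justified by Lemma~\ref{theorem:4.5}, the leading term contributes $\tfrac{2p-1}{p}\int_0^1 \rho'\,|\partial_x^2\gamma|^p/\mathcal{L}(\gamma)^{2p-1}\,dx$, which is exactly cancelled by the second term, and the $\lambda$-term vanishes by periodicity. Where you part ways with the paper is the evaluation of $\Phi_1(\gamma,\rho\partial_x\gamma)$: you obtain $\rho(0)-\rho(x)$, whereas the paper asserts it equals $\rho(x)$ and on that basis combines the fourth and fifth terms into a nonzero multiple of $\mathcal{L}(\gamma)\partial_t\gamma\cdot\rho\partial_x\gamma$, which yields the full statement in one stroke. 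Your value is the one I can verify from \eqref{eq:3.31}: $\partial_x\gamma\cdot\partial_x(\rho\partial_x\gamma)=\rho'\mathcal{L}(\gamma)^2$ and $\int_0^1\rho'\,d\tilde x=0$ give $\Phi_1=-\int_0^x\rho'\,d\tilde x=\rho(0)-\rho(x)$. So the discrepancy at this single identity is precisely where the two arguments diverge, and the whole proof turns on it.

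The genuine gap is your final upgrade step. With base point $x_0$ one gets $\Phi_1^{x_0}(\gamma,\rho\partial_x\gamma)=\rho(x_0)-\rho(x)$, so the fourth and fifth terms always collapse to $\rho(x_0)\,\mathcal{L}(\gamma)\partial_t\gamma\cdot\partial_x\gamma$; varying $x_0$ therefore only reproduces the single scalar identity $\int_0^T\!\!\int_0^1\mathcal{L}(\gamma)\partial_t\gamma\cdot\partial_x\gamma\,dx\,dt=0$ scaled by the constants $\rho(x_0)$, and never isolates $\int\!\!\int\mathcal{L}(\gamma)\partial_t\gamma\cdot\rho\,\partial_x\gamma$ for non-constant $\rho$. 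This failure is structural rather than computational: the admissible variation field generated by $\eta=\rho\partial_x\gamma$ is $\eta+\Phi_1(\gamma,\eta)\partial_x\gamma=\rho(x_0)\partial_x\gamma$, i.e.\ a tangential perturbation of a constant-speed closed curve, once the parametrization is renormalised, reduces to a rigid shift of the parameter, so the first-variation identity can only ever detect the constant-coefficient tangential direction. As written, your argument proves the lemma only for constant $\rho$, which is strictly weaker than the statement and insufficient for its application in the proof of Theorem~\ref{theorem:1.2}, where the lemma is invoked with the genuinely non-constant weight $\rho=\Phi_1(\gamma,\eta)$. To close the gap you either need an input beyond the Euler--Lagrange equation (the space of admissible tangential variations is too small to give the claim by testing alone), or you must reconcile your evaluation of $\Phi_1(\gamma,\rho\partial_x\gamma)$ with the paper's, since that identity is the entire content of the step.
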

\begin{proof}
Fix $\tau \in [0, T]$ arbitrarily. 
By Lemmas \ref{theorem:4.4} and \ref{theorem:4.5} we can reduce the weak form \eqref{eq:4.1} into 
\begin{align}
\label{eq:4.24}
\begin{split}
& \int^\tau_0 \!\!\!\! \int^1_0 \Bigl[ - \dfrac{\partial_x(|\partial^2_x \gamma|^{p-2} \partial^2_x \gamma)}{\mathcal{L}(\gamma)^{2p-1}} \cdot \partial_x \eta 
    - \dfrac{2p-1}{p} \dfrac{|\partial^2_x \gamma|^{p} \partial_x \gamma}{\mathcal{L}(\gamma)^{2p+1}} \cdot \partial_x \eta \\
& \qquad \quad 
+ \dfrac{\lambda}{\mathcal{L}(\gamma)} \partial_x \gamma \cdot \partial_x \eta 
+ \mathcal{L}(\gamma) \partial_t \gamma \cdot \eta  
+ \mathcal{L}(\gamma)\partial_t \gamma \cdot \Phi_1(\gamma,\eta) \partial_x \gamma \Bigr] \, dxdt=0     
\end{split}
\end{align}
for all $\eta \in L^\infty(0,T;W^{1,p}(\mathcal{S}^1))$. 
Fix $\rho \in C^\infty(\mathcal{S}^1; \mathbb{R})$ arbitrarily. 
We take $\rho \partial_x \gamma$ as $\eta$ in \eqref{eq:4.24}. 
First, by $|\partial_x \gamma|=\mathcal{L}(\gamma)$ and 
\begin{equation*}
\partial_x \gamma \cdot \partial^2_x \gamma = 0 \quad \text{for a.e.} \quad x \in \mathcal{S}^1 \quad \text{and} \quad t \in (0,\tau), 
\end{equation*}
we observe from \eqref{eq:3.31} that 
\begin{align*}
\Phi_1(\gamma,\rho \partial_x \gamma) 
 &= \dfrac{1}{\mathcal{L}(\gamma)^2} \Bigl[ x \int^1_0 \partial_x \gamma \cdot \partial_x(\rho \partial_x \gamma) \, d\tilde{x} 
                                            - \int^x_0 \partial_x \gamma \cdot \partial_x(\rho \partial_x \gamma) \, d\tilde{x} \Bigr] \\
 &= x \int^1_0 \partial_x \rho \, d\tilde{x} - \int^x_0 \partial_x \rho \, d\tilde{x} 
  = \rho(0) - \rho(x) 
\end{align*}
for a.e. $t \in (0,\tau)$, and then 
\begin{equation}
\label{eq:4.25}
\begin{aligned}
& \int^\tau_0 \!\!\! \int^1_0 \mathcal{L}(\gamma) \partial_t \gamma \cdot \Phi_1(\gamma,\rho \partial_x \gamma) \partial_x \gamma \, dxdt \\
& =\rho(0) \int^\tau_0 \!\!\! \int^1_0 \mathcal{L}(\gamma) \partial_t \gamma \cdot \partial_x \gamma \, dx dt 
   -\int^\tau_0 \!\!\! \int^1_0 \mathcal{L}(\gamma) \partial_t \gamma \cdot \rho \partial_x \gamma \, dx dt. 
\end{aligned}
\end{equation}
Similarly we obtain 
\begin{equation}
\label{eq:4.26}
\int^\tau_0 \!\!\! \int^1_0 \dfrac{\lambda}{\mathcal{L}(\gamma)} \partial_x \gamma \cdot \partial_x (\rho \partial_x \gamma) \, dx dt 
 = \lambda \int^\tau_0 \!\!\! \int^1_0 \mathcal{L}(\gamma) \partial_x \rho \, dx dt = 0 
\end{equation}
and 
\begin{equation}
\label{eq:4.27}
\int^\tau_0 \!\!\! \int^1_0 \dfrac{|\partial^2_x \gamma|^{p} \partial_x \gamma}{\mathcal{L}(\gamma)^{2p+1}} \cdot \partial_x (\rho \partial_x \gamma) \, dx dt 
 = \int^\tau_0 \!\!\! \int^1_0 \dfrac{|\partial^2_x \gamma|^{p}}{\mathcal{L}(\gamma)^{2p-1}} \partial_x \rho \, dx dt. 
\end{equation}
Moreover, integrating by parts, we see that 
\begin{equation}
\label{eq:4.28}
\begin{aligned}
& -\int^\tau_0 \!\!\!\! \int^1_0 \dfrac{\partial_x(|\partial^2_x \gamma|^{p-2} \partial^2_x \gamma)}{\mathcal{L}(\gamma)^{2p-1}} \cdot \partial_x (\rho \partial_x \gamma) \, dx dt \\
& \quad = - \int^\tau_0 \!\!\!\! \int^1_0 \dfrac{\partial_x(|\partial^2_x \gamma|^{p-2} \partial^2_x \gamma)}{\mathcal{L}(\gamma)^{2p-1}} 
                                \cdot \bigl[ \partial_x \rho \partial_x \gamma + \rho \partial^2_x \gamma \bigr] \, dx dt \\
& \quad = \int^\tau_0 \!\!\!\! \int^1_0 \dfrac{|\partial^2_x \gamma|^{p-2} \partial^2_x \gamma}{\mathcal{L}(\gamma)^{2p-1}} 
                                \cdot \bigl[ 2 \partial_x \rho \partial^2_x \gamma + \rho \partial^3_x \gamma \bigr] \, dx dt \\
& \quad = \int^\tau_0 \!\!\!\! \int^1_0 \Bigl[ 2 \dfrac{|\partial^2_x \gamma|^{p}}{\mathcal{L}(\gamma)^{2p-1}} \partial_x \rho 
                                             + \dfrac{|\partial^2_x \gamma|^{p-2} \partial^2_x \gamma \cdot \partial^3_x \gamma}{\mathcal{L}(\gamma)^{2p-1}} \rho \Bigr] \, dx dt \\
& \quad = \int^\tau_0 \!\!\!\! \int^1_0 \Bigl[ 2 \dfrac{|\partial^2_x \gamma|^{p}}{\mathcal{L}(\gamma)^{2p-1}} \partial_x \rho 
                                             + \dfrac{1}{p}\dfrac{\partial_x(|\partial^2_x \gamma|^{p})}{\mathcal{L}(\gamma)^{2p-1}} \rho \Bigr] \, dx dt. 
\end{aligned}
\end{equation}
Plugging \eqref{eq:4.25}, \eqref{eq:4.26}, \eqref{eq:4.27} and \eqref{eq:4.28} into \eqref{eq:4.24}, we have 
\begin{align*}
\rho(0) \int^T_0 \!\!\!\! \int^1_0 \mathcal{L}(\gamma) \partial_t \gamma \cdot \partial_x \gamma \, dx dt 
 &= -\dfrac{1}{p} \int^\tau_0 \!\!\!\! \int^1_0 \dfrac{\partial_x(|\partial^2_x \gamma|^{p}) \rho + |\partial^2_x \gamma|^{p} \partial_x \rho}{\mathcal{L}(\gamma)^{2p-1}} \, dx dt \\
 &= -\dfrac{1}{p} \int^\tau_0 \dfrac{1}{\mathcal{L}(\gamma)^{2p-1}} \Bigl[ |\partial^2_x \gamma|^{p} \rho \Bigr]^{x=1}_{x=0} \, dt
  =0. 
\end{align*}
Thus Lemma \ref{theorem:4.6} follows. 
\end{proof}

\begin{lemma} \label{theorem:4.7}
Let $\gamma : \mathcal{S}^1 \times [0, T] \to \mathbb{R}^2$ be a family of closed curves obtained by Lemma~{\rm \ref{theorem:3.9}}. 
Then 
\begin{equation}
\label{eq:4.29} 
\mathcal{E}_p(\gamma(T)) - \mathcal{E}_p(\gamma_0) \le - \dfrac{1}{2} \int^T_0 \!\!\! \int^1_0 \mathcal{L}(\gamma) |\partial_t \gamma|^2 \, dx dt. 
\end{equation}
\end{lemma}
\begin{proof} 
Since $\gamma_n(x,0)=\gamma_{0,n}(x)=\gamma_0(x)$ and $\gamma_{n}(x,T)=\gamma_{n,n}(x)$, it follows from \eqref{eq:3.15} that 
\begin{equation}
\label{eq:4.30}
\dfrac{1}{2} \int^T_0 \!\!\! \int^1_0 \mathcal{L}(\tilde{\Gamma}_{n}) |V_{n}|^2\, dx dt 
 \le \mathcal{E}_p(\gamma_0) - \mathcal{E}_p(\gamma_n(T)). 
\end{equation}
First we claim that 
\begin{equation}
\label{eq:4.31}
\liminf_{n \to \infty} \int^T_0 \!\!\! \int^1_0 \mathcal{L}(\tilde{\Gamma}_{n}) |V_{n}|^2\, dx dt 
  \ge \int^T_0 \!\!\! \int^1_0 \mathcal{L}(\gamma) |\partial_t \gamma|^2\, dx dt.
\end{equation}
Thanks to Lemma \ref{theorem:4.3}, we find $C_1>0$ and $C_2>0$ such that $C_1 < \mathcal{L}(\gamma) < C_2$ for a.e. $t \in (0, T)$. 
This together with Lemma \ref{theorem:3.9} implies that 
\begin{equation}
\label{eq:4.32}
\liminf_{n \to \infty} \int^T_0 \!\!\! \int^1_0 \mathcal{L}(\gamma) |V_{n}|^2\, dx dt 
  \ge \int^T_0 \!\!\! \int^1_0 \mathcal{L}(\gamma) |\partial_t \gamma|^2\, dx dt.  
\end{equation}
Moreover, it follows from Lemmata \ref{theorem:3.4} and \ref{theorem:3.11} that 
\begin{equation}
\label{eq:4.33}
\begin{aligned}
& \Bigl| \int^T_0 \!\!\! \int^1_0 \mathcal{L}(\tilde{\Gamma}_{n}) |V_{n}|^2\, dx dt - \int^T_0 \!\!\! \int^1_0 \mathcal{L}(\gamma) |V_{n}|^2\, dx dt  \Bigr| \\
& \qquad \le \| \tilde{\Gamma}_n - \gamma \|_{L^\infty(0,T;C^{1, \alpha}(\mathcal{S}^1))}  \int^T_0 \!\!\! \int^1_0 |V_{n}|^2\, dx dt \\
& \qquad \le C \| \tilde{\Gamma}_n - \gamma \|_{L^\infty(0,T;C^{1, \alpha}(\mathcal{S}^1))} 
 \to 0 
\end{aligned}
\end{equation}
as $n \to \infty$. 
Combining \eqref{eq:4.32} with \eqref{eq:4.33}, we obtain 
\begin{align*}
&\liminf_{n \to \infty} \int^T_0 \!\!\! \int^1_0 \mathcal{L}(\tilde{\Gamma}_{n}) |V_{n}|^2\, dx dt \\
& = \liminf_{n \to \infty} \Bigl[ \int^T_0 \!\!\! \int^1_0 ( \mathcal{L}(\tilde{\Gamma}_n) - \mathcal{L}(\gamma) ) |V_n|^2 \, dx dt + \int^T_0 \!\!\! \int^1_0 \mathcal{L}(\gamma) |V_{n}|^2\, dx dt  \Bigr] \\
& = \int^T_0 \!\!\! \int^1_0 \mathcal{L}(\gamma) |\partial_t \gamma|^2\, dx dt. 
\end{align*} 
Thus \eqref{eq:4.31} follows. 
On the other hand, along the same line as in the proof of Lemma~\ref{theorem:4.2}, we have  
\begin{equation*}
\gamma_{n}(T) \rightharpoonup \gamma(T) \quad \text{weakly in}\quad W^{2,p}(\mathcal{S}^1).
\end{equation*} 
This together with Lemma \ref{theorem:3.11} implies that   
\begin{equation}\label{eq:4.34}
\liminf_{n \to \infty} \mathcal{E}_p(\gamma_{n}(T)) \ge \mathcal{E}_p(\gamma(T)). 
\end{equation}
Thus, plugging \eqref{eq:4.31} and \eqref{eq:4.34} into \eqref{eq:4.30}, we obtain \eqref{eq:4.29}. 
\end{proof}

We are in a position to prove Theorem \ref{theorem:1.2}. 

\begin{proof}[Proof of Theorem \ref{theorem:1.2}]
Let $\gamma : \mathcal{S}^1 \times [0, T] \to \mathbb{R}^2$ be a family of closed curves obtained by Lemma \ref{theorem:3.9}. 
We prove that $\gamma$ is the desired weak solution of \eqref{eq:P}. 
To begin with, it follows from Lemma \ref{theorem:3.11} that $\gamma(x,t)=\gamma_0(x)$ for all $x \in \mathcal{S}^1$. 
We also deduce from Lemma \ref{theorem:4.1} that $\gamma$ satisfies \eqref{eq:1.2} for all $\eta \in L^\infty(0,T; W^{2,p}(\mathcal{S}^1))$. 
Thanks to Lemmas \ref{theorem:4.3} and \ref{theorem:4.7}, we see that \eqref{eq:1.3} and \eqref{eq:1.4} hold. 
Thus $\gamma : \mathcal{S}^1 \times [0,T] \to \mathbb{R}^2$ is a weak solution to \eqref{eq:P}.  
Since $0<T<\infty$ is arbitrary, Theorem \ref{theorem:1.2} follows. 
\end{proof}


\section{Proof of Theorem \ref{theorem:1.3}} \label{section:5}
Let $\gamma_0 \in W^{2,p}(\mathcal{S}^1;\mathbb{R}^2)$ satisfy \eqref{eq:1.1}. 
In this section, we fix such $\gamma_0$ arbitrarily, and denote the admissible set $\mathcal{AC}_{\gamma_0}$ by $\mathcal{AC}$ for short. 

\begin{lemma} \label{theorem:5.1}
Let $p=2$. 
Assume that $\gamma_0 \in W^{2,2}(\mathcal{S}^1)$ satisfies \eqref{eq:1.1}. 
Let $\gamma_1 : \mathcal{S}^1 \times [0, T] \to \mathbb{R}^2$ and $\gamma_2 : \mathcal{S}^1 \times [0, T] \to \mathbb{R}^2$ be weak solutions to \eqref{eq:P}. 
Then $\gamma_1 = \gamma_2$ in $H^1(0,T;L^2(\mathcal{S}^1)) \cup L^\infty(0,T; H^2(\mathcal{S}^1))$.  
\end{lemma}
\begin{proof}
Let $p=2$ and fix $\tau \in (0, T]$ arbitrarily. 
Let $\tilde{\gamma}_i(x,t):= \rho \gamma_i(x, \rho^{-4} t)$ for $\rho>0$.  
Define $t(\tau) \in (0, \rho^4 \tau]$ by 
\begin{equation}
\label{eq:5.1}
\|(\tilde{\gamma}_1 - \tilde{\gamma}_2)(\cdot,t(\tau))\|_{L^2(\mathcal{S}^1)} = \max_{t \in [0, \rho^4 \tau]}\|(\tilde{\gamma}_1 - \tilde{\gamma}_2)(\cdot,t)\|_{L^2(\mathcal{S}^1)}. 
\end{equation}
Set $\tilde{\lambda}:= \rho^{-2} \lambda$. 
Then 
\begin{equation}
\label{eq:5.2}
\begin{aligned}
&\int^{\rho^4 \tau}_0 \!\!\! \int^1_0 \Bigl[ \dfrac{\partial^2_x \tilde{\gamma}_i}{\mathcal{L}(\tilde{\gamma}_i)^{3}} \cdot \partial^2_x \varphi 
  - \dfrac{3}{2} \dfrac{|\partial^2_x \tilde{\gamma}_i|^{2} \partial_x \tilde{\gamma}_i}{\mathcal{L}(\tilde{\gamma}_i)^{5}} \cdot \partial_x \varphi 
  + \dfrac{\tilde{\lambda}}{\mathcal{L}(\tilde{\gamma}_i)} \partial_x \tilde{\gamma}_i \cdot \partial_x \varphi \\
& \qquad \qquad \qquad \qquad + \mathcal{L}(\tilde{\gamma}_i) \partial_t \tilde{\gamma}_i \cdot \varphi 
  + \mathcal{L}(\tilde{\gamma}_i) \partial_t \tilde{\gamma}_i \cdot \Phi_1(\tilde{\gamma}_i,\varphi) \partial_x \tilde{\gamma}_i \Bigr] \, dx dt \\
&= \rho^{2} \int^{\tau}_0 \!\!\int^1_0 \Bigl[ \dfrac{\partial^2_x \gamma_i}{\mathcal{L}(\gamma_i)^{3}} \cdot \partial^2_x \varphi 
  - \dfrac{3}{2} \dfrac{|\partial^2_x \gamma_i|^{2} \partial_x \gamma_i}{\mathcal{L}(\gamma_i)^{5}} \cdot \partial_x \varphi 
  + \dfrac{\lambda}{\mathcal{L}(\gamma_i)} \partial_x \gamma_i \cdot \partial_x \varphi \\
& \qquad \qquad \qquad \qquad + \mathcal{L}(\gamma_i) \partial_t \gamma_i \cdot \varphi 
  + \mathcal{L}(\gamma_i) \partial_t \gamma_i \cdot \Phi_1(\gamma_i,\varphi) \partial_x \gamma_i \Bigr] \, dx dt =0   
\end{aligned}
\end{equation}
for $i=1, 2$. 
Taking $(\tilde{\gamma}_1 - \tilde{\gamma}_2)/\mathcal{L}(\tilde{\gamma}_1)$ and $(\tilde{\gamma}_1 - \tilde{\gamma}_2)/\mathcal{L}(\tilde{\gamma}_2)$ 
as $\varphi$ in \eqref{eq:5.2} with $i=1$ and $i=2$ respectively, and subtracting the latter from the former, we have 
\begin{align*}
0&= \int^{t(\tau)}_0 \!\!\!\! \int^1_0 
    \Bigl[ \dfrac{ \partial^2_x \tilde{\gamma}_1}{\mathcal{L}(\tilde{\gamma}_1)^{4}} - \dfrac{ \partial^2_x \tilde{\gamma}_2}{\mathcal{L}(\tilde{\gamma}_2)^{4}} \Bigr] 
    \cdot \partial^2_x (\tilde{\gamma}_1 - \tilde{\gamma}_2) \, dx dt \\
& \qquad - \dfrac{3}{2} \int^{t(\tau)}_0 \!\!\!\! \int^1_0 \Bigl[ \dfrac{|\partial^2_x \tilde{\gamma}_1|^{2} \partial_x \tilde{\gamma}_1}{\mathcal{L}(\tilde{\gamma}_1)^{6}}  
                       - \dfrac{|\partial^2_x \tilde{\gamma}_2|^{2} \partial_x \tilde{\gamma}_2}{\mathcal{L}(\tilde{\gamma}_2)^{6}} \Bigr] \cdot \partial_x (\tilde{\gamma}_1 - \tilde{\gamma}_2) \, dx dt \\
& \qquad + \tilde{\lambda} \int^{t(\tau)}_0 \!\!\!\! \int^1_0 
   \Bigl[ \dfrac{\partial_x \tilde{\gamma}_1}{\mathcal{L}(\tilde{\gamma}_1)^2} - \dfrac{\partial_x \tilde{\gamma}_2}{\mathcal{L}(\tilde{\gamma}_2)^2} \Bigr] 
   \cdot \partial_x (\tilde{\gamma}_1 - \tilde{\gamma}_2) \, dx dt \\
& \qquad + \int^{t(\tau)}_0 \!\!\!\! \int^1_0 \partial_t(\tilde{\gamma}_1 - \tilde{\gamma}_2)  \cdot (\tilde{\gamma}_1 - \tilde{\gamma}_2) \, dxdt \\
& \qquad + \int^{t(\tau)}_0 \!\!\!\! \int^1_0 
  \mathcal{L}(\tilde{\gamma}_1) \partial_t \tilde{\gamma}_1 \cdot \Phi_1(\tilde{\gamma}_1, \frac{\tilde{\gamma}_1 - \tilde{\gamma}_2}{\mathcal{L}(\tilde{\gamma}_1)}) \partial_x \tilde{\gamma}_1 \, dx dt \\
& \qquad - \int^{t(\tau)}_0 \!\!\!\! \int^1_0 
  \mathcal{L}(\tilde{\gamma}_2) \partial_t \tilde{\gamma}_2 \cdot \Phi_1(\tilde{\gamma}_2, \frac{\tilde{\gamma}_1 - \tilde{\gamma}_2}{\mathcal{L}(\tilde{\gamma}_2)}) \partial_x \tilde{\gamma}_2 \, dx dt \\ 
&:= I_1 + I_2 + I_3 + I_4 + I_5 + I_6.   
\end{align*}
Since 
\begin{equation}
\label{eq:5.3}
\mathcal{L}(\tilde{\gamma}_i) = \rho \mathcal{L}(\gamma_i), 
\end{equation}
we see that $|\partial_x \tilde{\gamma}_i| \equiv \mathcal{L}(\tilde{\gamma}_i)$. 
We also observe from \eqref{eq:1.3}, \eqref{eq:5.3} and Lemma \ref{theorem:4.3} that 
\begin{align}
& \tilde{C}_1 := \dfrac{2 \pi^2}{\mathcal{E}_2(\gamma_0)} \rho \le \mathcal{L}(\tilde{\gamma}_i) \le \dfrac{\mathcal{E}_2(\gamma_0)}{\lambda} \rho =: \tilde{C}_0, \label{eq:5.4} \\ 
& E_2(\tilde{\gamma}_i) = \dfrac{1}{2} \int^1_0 |\tilde{\kappa}_i|^2 \mathcal{L}(\tilde{\gamma}_i) \, dx 
  = \dfrac{1}{\rho} \dfrac{1}{2} \int^1_0 |\kappa_i|^2 \mathcal{L}(\gamma_i) \, dx 
  = \dfrac{1}{\rho} E_2(\gamma_i)
  \le \dfrac{1}{\rho} \mathcal{E}_2(\gamma_0). \label{eq:5.5}
\end{align} 
Moreover, it follows from the definition of $\tilde{\gamma}_i$ and Lemma \ref{theorem:4.7} that  
\begin{equation}
\label{eq:5.6}
\int^{\rho^4 \tau}_0 \!\!\! \int^1_0 \mathcal{L}(\tilde{\gamma}_i)|\partial_t \tilde{\gamma}_i|^2 \, dx dt 
 = \dfrac{1}{\rho} \int^{\tau}_0 \!\!\! \int^1_0 \mathcal{L}(\gamma_i)|\partial_t \gamma_i|^2 \, dx dt 
 \le \dfrac{2 \mathcal{E}_2(\gamma_0)}{\rho}. 
\end{equation}

First we reduce $I_1$ into  
\begin{align*}
I_1 &=  \int^{t(\tau)}_0 \!\!\!\! \int^1_0 \dfrac{1}{\mathcal{L}(\tilde{\gamma}_1)^{4}} |\partial^2_x (\tilde{\gamma}_1 - \tilde{\gamma}_2)|^2 \, dxdt  \\
    & \qquad +  \int^{t(\tau)}_0 \!\!\!\! \int^1_0 \Bigl[ \dfrac{1}{\mathcal{L}(\tilde{\gamma}_1)^{4}} - \dfrac{1}{\mathcal{L}(\tilde{\gamma}_2)^{4}} \Bigr] 
                                       \partial^2_x \tilde{\gamma}_2 \cdot \partial^2_x (\tilde{\gamma}_1 - \tilde{\gamma}_2) \, dxdt  
 =: I_{11} + I_{12}.                                            
\end{align*}
It follows from \eqref{eq:5.4} that  
\begin{equation*}
I_{11} \ge \dfrac{1}{\tilde{C}_0^{4}} \int^{t(\tau)}_0 \!\!\!\! \int^1_0 |\partial^2_x (\gamma_1-\gamma_2)|^{2} \, dx dt.    
\end{equation*}
Since 
\begin{align*}
\Bigl| \dfrac{1}{\mathcal{L}(\tilde{\gamma}_1)^{r}} - \dfrac{1}{\mathcal{L}(\tilde{\gamma}_2)^{r}} \Bigr| 
&\le \dfrac{r}{[ \theta \mathcal{L}(\tilde{\gamma}_1) + (1-\theta)\mathcal{L}(\tilde{\gamma}_2) ]^{r+1}} | \mathcal{L}(\tilde{\gamma}_1)-\mathcal{L}(\tilde{\gamma}_2)| \\
&\le \dfrac{r}{\tilde{C}_1^{r+1}} \| \partial_x(\tilde{\gamma}_1-\tilde{\gamma}_2)\|_{L^1(\mathcal{S}^1)} 
\end{align*}
for each $r \ge 1$, and it follows from \eqref{eq:5.5} that 
\begin{equation*}
\int^1_0 |\partial^2_x \tilde{\gamma}_i|^2 \, dx 
= 2 \mathcal{L}(\tilde{\gamma}_i)^2 E_2(\tilde{\gamma}_i) 
\le \dfrac{2}{\rho} \tilde{C}_0^2 \mathcal{E}_2(\gamma_0) 
= \dfrac{2 \lambda}{\rho^2} \tilde{C}_0^3 \quad \text{for} \quad i=1, 2,  
\end{equation*}
we deduce from H\"older's inequality and Lemma \ref{theorem:4.3} that  
\begin{align*}
|I_{12}| &\le \dfrac{4}{\tilde{C}_1^{5}} 
            \int^{t(\tau)}_0 \| \partial_x (\tilde{\gamma}_1 - \tilde{\gamma}_2) \|_{L^1(\mathcal{S}^1)} 
            \| \partial^2_x \tilde{\gamma}_2 \|_{L^2(\mathcal{S}^1)} \| \partial^2_x (\tilde{\gamma}_1- \tilde{\gamma}_2)\|_{L^2(\mathcal{S}^1)} \, dt \\
 &\le \dfrac{4 \sqrt{2 \lambda} \tilde{C}_0^{\frac{3}{2}}}{\tilde{C}_1^{5} \rho} 
      \int^{t(\tau)}_0 \| \partial_x (\tilde{\gamma}_1 - \tilde{\gamma}_2) \|_{L^1(\mathcal{S}^1)} \| \partial^2_x (\tilde{\gamma}_1- \tilde{\gamma}_2)\|_{L^2(\mathcal{S}^1)} \, dt.  
\end{align*}
Since it follows from Proposition \ref{theorem:2.1} that 
\begin{equation*}
\| \partial_x (\tilde{\gamma}_1 - \tilde{\gamma}_2) \|_{L^\infty(\mathcal{S}^1)} 
\le A \| \tilde{\gamma}_1 - \tilde{\gamma}_2 \|_{L^2(\mathcal{S}^1)}^{\frac{1}{4}} \| \partial^2_x(\tilde{\gamma}_1-\tilde{\gamma}_2)\|_{L^2(\mathcal{S}^1)}^{\frac{3}{4}}, 
\end{equation*}
we deduce from Young's inequality that  
\begin{equation}
\label{eq:5.7}
\begin{aligned}
|I_{12}| &\le \dfrac{4 A \sqrt{2 \lambda} \tilde{C}_0^{\frac{3}{2}}}{\tilde{C}_1^{5} \rho}
            \int^{t(\tau)}_0 \| \tilde{\gamma}_1 - \tilde{\gamma}_2 \|_{L^2(\mathcal{S}^1)}^{\frac{1}{4}} \| \partial^2_x (\tilde{\gamma}_1-\tilde{\gamma}_2)\|_{L^2(\mathcal{S}^1)}^{\frac{7}{4}} \, dt \\
         &\le \varepsilon \int^{t(\tau)}_0 \| \partial^2_x (\tilde{\gamma}_1-\tilde{\gamma}_2)\|_{L^2(\mathcal{S}^1)}^{2} \, dt 
              + C(\varepsilon) \int^{\rho^4 \tau}_0 \| \tilde{\gamma}_1 - \tilde{\gamma}_2 \|_{L^2(\mathcal{S}^1)}^{2} \, dt  
\end{aligned}
\end{equation}
for $\varepsilon>0$. 
Regarding $I_2$, we have 
\begin{align*}
I_2 &= \int^{t(\tau)}_0 \!\!\! \int^1_0 \Bigl[ \dfrac{1}{\mathcal{L}(\tilde{\gamma}_1)^{4}} - \dfrac{1}{\mathcal{L}(\tilde{\gamma}_2)^{4}} \Bigr] 
                                |\partial^2_x \tilde{\gamma}_1|^2 \partial_x \tilde{\gamma}_1 \cdot \partial_x(\tilde{\gamma}_1 - \tilde{\gamma}_2) \, dx dt \\
    & \qquad + \int^{t(\tau)}_0 \!\!\! \int^1_0 \Bigl[ |\partial^2_x \tilde{\gamma}_1|^2 - |\partial^2_x \tilde{\gamma}_2|^2 \Bigr] 
                                        \dfrac{\partial_x \tilde{\gamma}_1}{\mathcal{L}(\tilde{\gamma}_2)^{4}} \cdot \partial_x(\tilde{\gamma}_1 - \tilde{\gamma}_2) \, dx dt \\ 
    & \qquad + \int^{t(\tau)}_0 \!\!\! \int^1_0 \dfrac{|\partial^2_x \tilde{\gamma}_2|^2}{\mathcal{L}(\tilde{\gamma}_2)^{4}} |\partial_x (\tilde{\gamma}_1 - \tilde{\gamma}_2)|^2 \, dx dt
    =: I_{21} + I_{22} + I_{23}.  
\end{align*}
Similarly to \eqref{eq:5.7} we obtain 
\begin{align*}
|I_{21}| &\le \dfrac{4 \tilde{C}_0}{\tilde{C}_1^{5}} 
 \int^{t(\tau)}_0 \|\partial_x (\tilde{\gamma}_1-\tilde{\gamma}_2)\|_{L^1(\mathcal{S}^1)} \int^1_0 |\partial^2_x \tilde{\gamma}_1|^2 |\partial_x(\tilde{\gamma}_1 - \tilde{\gamma}_2)| \, dx dt \\
       &\le \dfrac{4 \tilde{C}_0}{\tilde{C}_1^{5}} 
            \int^{t(\tau)}_0 \|\partial_x (\tilde{\gamma}_1-\tilde{\gamma}_2)\|_{L^1(\mathcal{S}^1)} 
                                               \|\partial_x (\tilde{\gamma}_1-\tilde{\gamma}_2)\|_{L^\infty(\mathcal{S}^1)} \| \partial^2_x \tilde{\gamma}_1\|_{L^2(\mathcal{S}^1)}^2 \, dt \\
       &\le \dfrac{8 \lambda \tilde{C}_0^4}{\tilde{C}_1^{5}\rho^2} 
            \int^{t(\tau)}_0 \|\partial_x (\tilde{\gamma}_1-\tilde{\gamma}_2)\|_{L^1(\mathcal{S}^1)} \|\partial_x (\tilde{\gamma}_1-\tilde{\gamma}_2)\|_{L^\infty(\mathcal{S}^1)} \, dt \\ 
       &\le \dfrac{8 \lambda A \tilde{C}_0^4}{\tilde{C}_1^{5} \rho^2} 
            \int^{t(\tau)}_0 \| \tilde{\gamma}_1 - \tilde{\gamma}_2 \|_{L^2(\mathcal{S}^1)}^{\frac{1}{2}} \| \partial^2_x(\tilde{\gamma}_1-\tilde{\gamma}_2)\|_{L^2(\mathcal{S}^1)}^{\frac{3}{2}} \, dt \\
       &\le \varepsilon \int^{t(\tau)}_0 \| \partial^2_x (\tilde{\gamma}_1- \tilde{\gamma}_2)\|_{L^2(\mathcal{S}^1)}^{2} \, dt 
              + C(\varepsilon) \int^{\rho^4 \tau}_0 \| \tilde{\gamma}_1 - \tilde{\gamma}_2 \|_{L^2(\mathcal{S}^1)}^{2} \, dt
\end{align*}
for $\varepsilon>0$. 
Since 
\begin{equation*}
\bigl| |\partial^2_x \gamma_1|^2 - |\partial^2_x \gamma_2|^2 \bigr| \le ( |\partial^2_x \gamma_1| + |\partial^2_x \gamma_2| ) |\partial^2_x(\gamma_1 - \gamma_2)|, 
\end{equation*}
we see that  
\begin{align*}
|I_{22}| &\le \dfrac{\tilde{C}_0}{\tilde{C}_1^{4}} \int^{t(\tau)}_0 \!\!\! \int^1_0 
            \bigl[ |\partial^2_x \tilde{\gamma}_1| + |\partial^2_x \tilde{\gamma}_2| \bigr] |\partial^2_x (\tilde{\gamma}_1 - \tilde{\gamma}_2)| |\partial_x (\tilde{\gamma}_1 - \tilde{\gamma}_2)| \, dx dt \\
       &\le \dfrac{\tilde{C}_0}{\tilde{C}_1^{4}} \int^{t(\tau)}_0 \|\partial_x (\tilde{\gamma}_1-\tilde{\gamma}_2)\|_{L^\infty(\mathcal{S}^1)} 
                   \Bigl[ \sum^2_{i=1} \|\partial^2_x \tilde{\gamma}_i\|_{L^2(\mathcal{S}^1)} \Bigr] \|\partial^2_x (\tilde{\gamma}_1 - \tilde{\gamma}_2) \|_{L^2(\mathcal{S}^1)} \, dt \\
       &\le \dfrac{2 \sqrt{2 \lambda} \tilde{C}_0^{\frac{5}{2}}}{\tilde{C}_1^{5} \rho}  
              \int^{t(\tau)}_0 \|\partial_x (\tilde{\gamma}_1-\tilde{\gamma}_2)\|_{L^\infty(\mathcal{S}^1)} \|\partial^2_x (\tilde{\gamma}_1 - \tilde{\gamma}_2) \|_{L^2(\mathcal{S}^1)} \, dt \\
       &\le \varepsilon \int^{t(\tau)}_0 \| \partial^2_x (\tilde{\gamma}_1-\tilde{\gamma}_2)\|_{L^2(\mathcal{S}^1)}^{2} \, dt 
              + C(\varepsilon) \int^{\rho^4 \tau}_0 \| \tilde{\gamma}_1 - \tilde{\gamma}_2 \|_{L^2(\mathcal{S}^1)}^{2} \, dt
\end{align*}
for $\varepsilon>0$. 
Similarly we have 
\begin{equation*}
|I_{23}| \le \varepsilon \int^{t(\tau)}_0 \| \partial^2_x (\tilde{\gamma}_1-\tilde{\gamma}_2)\|_{L^2(\mathcal{S}^1)}^{2} \, dt 
              + C(\varepsilon) \int^{\rho^4 \tau}_0 \| \tilde{\gamma}_1 - \tilde{\gamma}_2 \|_{L^2(\mathcal{S}^1)}^{2} \, dt
\end{equation*}
for $\varepsilon>0$. 
On $I_3$, we have 
\begin{align*}
I_3 &= \tilde{\lambda} \int^{t(\tau)}_0 \!\!\! \int^1_0  
\Bigl[ \dfrac{1}{\mathcal{L}(\tilde{\gamma}_1)^2} - \dfrac{1}{\mathcal{L}(\tilde{\gamma}_2)^2} \Bigr] \partial_x \tilde{\gamma}_1 \cdot \partial_x(\tilde{\gamma}_1-\tilde{\gamma}_2) \, dx dt \\
    & \qquad 
    + \tilde{\lambda} \int^{t(\tau)}_0 \!\!\! \int^1_0 \dfrac{1}{\mathcal{L}(\tilde{\gamma}_2)^2} |\partial_x(\tilde{\gamma}_1-\tilde{\gamma}_2)|^2 \, dx dt 
    =: I_{31} + I_{32}. 
\end{align*}
Along the same line as above, we see that 
\begin{align*}
I_{31} &\le \dfrac{2 \tilde{\lambda} \tilde{C}_0}{\tilde{C}_1^3} \int^{t(\tau)}_0 \|\partial_x(\tilde{\gamma}_1-\tilde{\gamma}_2)\|_{L^1(\mathcal{S}^1)}^2 \, dt \\
       &\le \varepsilon \int^{t(\tau)}_0 \| \partial^2_x (\tilde{\gamma}_1-\tilde{\gamma}_2)\|_{L^2(\mathcal{S}^1)}^{2} \, dt 
              + C(\varepsilon) \int^{\rho^4 \tau}_0 \| \tilde{\gamma}_1 - \tilde{\gamma}_2 \|_{L^2(\mathcal{S}^1)}^{2} \, dt, \\
I_{32} &\ge \dfrac{\tilde{\lambda}}{\tilde{C}_1^2} \int^{t(\tau)}_0 \| \partial_x(\tilde{\gamma}_1-\tilde{\gamma}_2) \|^2_{L^2(\mathcal{S}^1)} \, dt. 
\end{align*}
On $I_4$ we deduce from $\tilde{\gamma}_1(\cdot,0)=\tilde{\gamma}_2(\cdot,0)=\rho \gamma_0(\cdot)$ that 
\begin{equation*}
I_4 = \int^{t(\tau)}_0 \!\!\!\! \int^1_0 \partial_t (\tilde{\gamma}_1-\tilde{\gamma}_2) \cdot (\tilde{\gamma}_1 - \tilde{\gamma}_2) \, dxdt     
    = \dfrac{1}{2} \int^1_0 |\tilde{\gamma}_1 - \tilde{\gamma}_2|^2 \, dx \Bigm|_{t=t(\tau)}. 
\end{equation*}
We turn to $I_5$. First we deduce from $|\partial_x \tilde{\gamma}_i| \equiv \mathcal{L}(\tilde{\gamma}_i)$ for $i=1, 2$ that  
\begin{align*}
\Phi_1(\tilde{\gamma}_1, \frac{\tilde{\gamma}_1 - \tilde{\gamma}_2}{\mathcal{L}(\tilde{\gamma}_1)})
 &= \dfrac{1}{\mathcal{L}(\tilde{\gamma}_1)^3} \Bigl[ x \int^1_0 \partial_x \tilde{\gamma}_1 \cdot \partial_x(\tilde{\gamma}_1 - \tilde{\gamma}_2) \, d\tilde{x}
     - \int^x_0 \partial_x \tilde{\gamma}_1 \cdot \partial_x(\tilde{\gamma}_1 - \tilde{\gamma}_2) \, d\tilde{x} \Bigr] \\
 &= \dfrac{1}{\mathcal{L}(\tilde{\gamma}_1)^3} \Bigl[ x \int^1_0 \bigl[ \mathcal{L}(\tilde{\gamma}_1)^2 - \partial_x \tilde{\gamma}_1 \cdot \partial_x \tilde{\gamma}_2 \bigr] \, d\tilde{x} \\
 & \qquad \qquad \qquad 
 - \int^x_0 \bigl[ \mathcal{L}(\tilde{\gamma}_1)^2 - \partial_x \tilde{\gamma}_1 \cdot \partial_x \tilde{\gamma}_2 \bigr] \, d\tilde{x} \Bigr] \\
 &= -\dfrac{1}{\mathcal{L}(\tilde{\gamma}_1)^3} \Bigl[ x \int^1_0 \partial_x \tilde{\gamma}_1 \cdot \partial_x \tilde{\gamma}_2 \, d\tilde{x} 
    - \int^x_0 \partial_x \tilde{\gamma}_1 \cdot \partial_x \tilde{\gamma}_2 \, d\tilde{x} \Bigr] \\
 &= - \dfrac{1}{\mathcal{L}(\tilde{\gamma}_1)} \Phi_1(\tilde{\gamma}_1, \tilde{\gamma}_2),  
\end{align*}
and then 
\begin{equation*}
I_5 = - \int^{t(\tau)}_0 \!\!\!\! \int^1_0 \partial_t \tilde{\gamma}_1 \cdot \Phi_1(\tilde{\gamma}_1, \tilde{\gamma}_2) \partial_x \tilde{\gamma}_1 \, dx dt.  
\end{equation*}
Since it follows from $|\partial_x \tilde{\gamma}_i| \equiv \mathcal{L}(\tilde{\gamma}_i)$ for $i=1, 2$ that 
\begin{equation*}
|\partial_x( \tilde{\gamma}_1 - \tilde{\gamma}_2)|^2 
 = \mathcal{L}(\tilde{\gamma}_1)^2 + \mathcal{L}(\tilde{\gamma}_2)^2 - 2 \partial_x \tilde{\gamma}_1 \cdot \partial_x \tilde{\gamma}_2, 
\end{equation*}
we see that 
\begin{align*}
\Phi_1(\tilde{\gamma}_1, \tilde{\gamma}_2) 
 &= \dfrac{1}{\mathcal{L}(\tilde{\gamma}_1)^2} \Bigl[ 
    -\dfrac{x}{2} \int^1_0 \bigl[ |\partial_x(\tilde{\gamma}_1 - \tilde{\gamma}_2)|^2 - \mathcal{L}(\tilde{\gamma}_1)^2 - \mathcal{L}(\tilde{\gamma}_2)^2 \bigr] \, d\tilde{x} \\
 & \qquad \qquad \quad
    + \dfrac{1}{2} \int^x_0 \bigl[ |\partial_x(\tilde{\gamma}_1 - \tilde{\gamma}_2)|^2 - \mathcal{L}(\tilde{\gamma}_1)^2 - \mathcal{L}(\tilde{\gamma}_2)^2 \bigr] \, d\tilde{x} \Bigr] \\
 &= - \dfrac{1}{2 \mathcal{L}(\gamma_1)^2} \Bigl[ 
      x \int^1_0 |\partial_x(\tilde{\gamma}_1 - \tilde{\gamma}_2)|^2 \, d\tilde{x} 
    - \int^x_0 |\partial_x(\tilde{\gamma}_1 - \tilde{\gamma}_2)|^2 \, d\tilde{x} \Bigr].  
\end{align*}
Thus we obtain 
\begin{equation*}
|I_5| \le \int^{t(\tau)}_0 \dfrac{1}{\mathcal{L}(\tilde{\gamma}_1)} \| \partial_x(\tilde{\gamma}_1 - \tilde{\gamma}_2) \|^2_{L^2(\mathcal{S}^1)} \int^1_0 |\partial_t \tilde{\gamma}_1| \, dx dt.  
\end{equation*}
Since 
\begin{equation*}
\| \partial_x(\tilde{\gamma}_1 - \tilde{\gamma}_2)\|_{L^2(\mathcal{S}^1)} 
 \le \| \tilde{\gamma}_1 - \tilde{\gamma}_2 \|_{L^2(\mathcal{S}^1)}^{\frac{1}{2}} \| \partial^2_x(\tilde{\gamma}_1 - \tilde{\gamma}_2)\|_{L^2(\mathcal{S}^1)}^{\frac{1}{2}}, 
\end{equation*}
we observe from H\"older's inequality, Cauchy's inequality and \eqref{eq:5.6} that 
\begin{align*}
|I_5| &\le \int^{t(\tau)}_0 \dfrac{1}{\mathcal{L}(\tilde{\gamma}_1)^{\frac{3}{2}}} \| \partial_x(\tilde{\gamma}_1 - \tilde{\gamma}_2) \|^2_{L^2(\mathcal{S}^1)} 
           \Bigl( \int^1_0 \mathcal{L}(\tilde{\gamma}_1) |\partial_t \tilde{\gamma}_1|^2 \, dx \Bigr)^{\frac{1}{2}} \, dt \\  
&\le \Bigl[ \int^{t(\tau)}_0 \dfrac{1}{\mathcal{L}(\tilde{\gamma}_1)^3} \| \partial_x(\tilde{\gamma}_1 - \tilde{\gamma}_2) \|^4_{L^2(\mathcal{S}^1)} \, dt \Bigr]^{\frac{1}{2}}
           \Bigl[\int^{t(\tau)}_0 \!\! \int^1_0 \mathcal{L}(\tilde{\gamma}_1) |\partial_t \tilde{\gamma}_1|^2 \, dx dt\Bigr]^{\frac{1}{2}} \\
&\le \dfrac{\sqrt{2 \mathcal{E}_2(\gamma_0)}}{\tilde{C}_1^3 \sqrt{\rho}} 
    \Bigl[ \int^{t(\tau)}_0 \| \tilde{\gamma}_1 - \tilde{\gamma}_2 \|^2_{L^2(\mathcal{S}^1)} \| \partial^2_x(\tilde{\gamma}_1 - \tilde{\gamma}_2) \|^2_{L^2(\mathcal{S}^1)} \, dt \Bigr]^{\frac{1}{2}} \\
&\le \dfrac{\sqrt{2} \mathcal{E}_2(\gamma_0)^{\frac{7}{2}}}{8 \pi^6 \rho^{\frac{7}{2}}} \| (\tilde{\gamma}_1 - \tilde{\gamma}_2)(\cdot, t(\tau)) \|_{L^2(\mathcal{S}^1)}
     \Bigl[ \int^{t(\tau)}_0 \| \partial^2_x(\tilde{\gamma}_1 - \tilde{\gamma}_2) \|^2_{L^2(\mathcal{S}^1)} \, dt \Bigr]^{\frac{1}{2}} \\
&\le \dfrac{1}{8} \| (\tilde{\gamma}_1 - \tilde{\gamma}_2)(\cdot, t(\tau)) \|^2_{L^2(\mathcal{S}^1)} 
     + \dfrac{\mathcal{E}_2(\gamma_0)^{7}}{16 \pi^{12} \rho^7} \int^{t(\tau)}_0 \| \partial^2_x(\tilde{\gamma}_1 - \tilde{\gamma}_2) \|^2_{L^2(\mathcal{S}^1)} \, dt. 
\end{align*}
Along the same line we also find 
\begin{equation*}
|I_6| \le \dfrac{1}{8} \| (\tilde{\gamma}_1 - \tilde{\gamma}_2)(\cdot, t(\tau)) \|^2_{L^2(\mathcal{S}^1)} 
     + \dfrac{\mathcal{E}_2(\gamma_0)^{7}}{16\pi^{12} \rho^7} \int^{t(\tau)}_0 \| \partial^2_x(\tilde{\gamma}_1 - \tilde{\gamma}_2) \|^2_{L^2(\mathcal{S}^1)} \, dt. 
\end{equation*}
Thus, taking $\varepsilon>0$ small enough, we obtain 
\begin{equation}
\label{eq:5.8}
\begin{aligned}
&\dfrac{1}{4} \|(\tilde{\gamma}_1 - \tilde{\gamma}_2)(\cdot,t(\tau))\|^2_{L^2(\mathcal{S}^1)} 
 + \dfrac{\tilde{\lambda}}{\tilde{C}_1^2} \int^{t(\tau)}_0 \| \partial_x(\tilde{\gamma}_1-\tilde{\gamma}_2) \|^2_{L^2(\mathcal{S}^1)} \, dt \\
& \quad  + \dfrac{1}{2 \tilde{C}_0^{4}} \int^{t(\tau)}_0 \|\partial^2_x (\tilde{\gamma}_1-\tilde{\gamma}_2)\|^{2}_{L^2(\mathcal{S}^1)} \, dt \\
&\le C \int^{\rho^4 \tau}_0 \| \tilde{\gamma}_1 - \tilde{\gamma}_2 \|_{L^2(\mathcal{S}^1)}^{2} \, dt 
  + \dfrac{\mathcal{E}_2(\gamma_0)^{7}}{8 \pi^{12} \rho^7} \int^{t(\tau)}_0 \| \partial^2_x(\tilde{\gamma}_1 - \tilde{\gamma}_2) \|^2_{L^2(\mathcal{S}^1)} \, dt.  
\end{aligned}
\end{equation}
Taking $\rho>0$ large enough such that 
\begin{equation*}
\rho^3 > \dfrac{\mathcal{E}_2(\gamma_0)^{11}}{4 \pi^{12} \lambda^4}, 
\end{equation*}
we see that 
\begin{equation*}
\dfrac{1}{2 \tilde{C}_0^{4}} - \dfrac{\mathcal{E}_2(\gamma_0)^{7}}{8 \pi^{12} \rho^7} =: C_2 > 0, 
\end{equation*}
and then we observe from \eqref{eq:5.1} that \eqref{eq:5.8} is reduced into  
\begin{equation*}
\|(\tilde{\gamma}_1 - \tilde{\gamma}_2)(\rho^4 \tau) \|^2_{L^2(\mathcal{S}^1)} \le C \int^{\rho^4 \tau}_0 \| \tilde{\gamma}_1 - \tilde{\gamma}_2 \|_{L^2(\mathcal{S}^1)}^{2} \, dt.    
\end{equation*}
Since $\tau \in [0, T]$ is arbitrary, this together with Gronwall's inequality implies that 
\begin{equation}
\label{eq:5.9}
\|(\tilde{\gamma}_1 - \tilde{\gamma}_2)(\cdot,t)\|_{L^2(\mathcal{S}^1)} = 0 \quad \text{for all} \quad t \in [0, \rho^{4} T]. 
\end{equation}
Plugging \eqref{eq:5.9} into the above argument, we have 
\begin{equation*}
C_2 \int^{\tau}_0 \|\partial^2_x (\tilde{\gamma}_1-\tilde{\gamma}_2) \|^{2}_{L^2(\mathcal{S}^1)} \, dt 
 + \dfrac{\tilde{\lambda}}{\tilde{C}_1^2} \int^{\tau}_0 \| \partial_x(\tilde{\gamma}_1-\tilde{\gamma}_2) \|^2_{L^2(\mathcal{S}^1)} \, dt
\le 0 
\end{equation*}
for $\tau \in [0, \rho^{4} T]$, and then, 
\begin{equation*}
\int^\tau_0 \| \partial_x(\tilde{\gamma}_1-\tilde{\gamma}_2) \|^2_{L^2(\mathcal{S}^1)} \, dt 
 = \int^\tau_0 \|\partial^2_x (\tilde{\gamma}_1-\tilde{\gamma}_2)\|^{2}_{L^2(\mathcal{S}^1)} \, dt = 0 
\end{equation*}
for $\tau \in [0, \rho^{4} T]$. 
Therefore Theorem \ref{theorem:5.1} follows. 
\end{proof}

\begin{lemma} \label{theorem:5.2}
Let $p=2$. Let $\gamma : \mathcal{S}^1 \times [0, \infty) \to \mathbb{R}^2$ be a global-in-time weak solution to problem \eqref{eq:P}. 
Then there exists a constant $C>0$ such that
\begin{equation}
\label{eq:5.10}
\partial^4_x \gamma \in L^{2}(0,T; L^2(\mathcal{S}^1)). 
\end{equation}
\end{lemma}
\begin{proof}
Let $p=2$. Fix $T>0$ arbitrarily. 
Thanks to Lemma \ref{theorem:4.5} we see that $\partial^3_x \gamma \in L^2(0,T; L^2(\mathcal{S}^1))$. 
Along the same argument as in the proof of Lemma \ref{theorem:4.5}, we observe from \eqref{eq:1.2} that 
\begin{equation}
\label{eq:5.11}
\begin{aligned}
\int^1_0 \Bigl[ \dfrac{\partial^2_x \gamma}{\mathcal{L}(\gamma)^{3}} \cdot \partial^2_x \varphi 
& - \dfrac{3}{2} \dfrac{|\partial^2_x \gamma|^{2} \partial_x \gamma}{\mathcal{L}(\gamma)^{5}} \cdot \partial_x \varphi 
  + \dfrac{\lambda}{\mathcal{L}(\gamma)} \partial_x \gamma \cdot \partial_x \varphi \\
& + \mathcal{L}(\gamma) \partial_t \gamma \cdot \varphi 
  + \mathcal{L}(\gamma) \partial_t \gamma \cdot \Phi_1(\gamma,\varphi) \partial_x \gamma \Bigr] \, dx=0   
\end{aligned}
\end{equation}
for a.e. $t \in (0, T)$ and all $\varphi \in H^{2}(\mathcal{S}^1; \mathbb{R}^2)$. 
Fix $\varphi \in H^2(\mathcal{S}^1; \mathbb{R}^2)$ arbitrarily. 
Integrating by parts, we have 
\begin{equation}
\label{eq:5.12}
\begin{aligned}
\int^1_0 \dfrac{\partial^3_x \gamma}{\mathcal{L}(\gamma)^{3}} \cdot \partial_x \varphi \, dx 
&= \dfrac{3}{2} \int^1_0 \dfrac{\partial_x(|\partial^2_x \gamma|^{2} \partial_x \gamma)}{\mathcal{L}(\gamma)^{5}} \cdot \varphi \, dx 
   - \int^1_0 \dfrac{\lambda}{\mathcal{L}(\gamma)} \partial^2_x \gamma \cdot \varphi \, dx \\
& \quad - \int^1_0 \mathcal{L}(\gamma) \partial_t \gamma \cdot \varphi \, dx 
   - \int^1_0 \mathcal{L}(\gamma) \partial_t \gamma \cdot \Phi_1(\gamma,\varphi) \partial_x \gamma \, dx \\
&=: I_1 + I_2 + I_3 + I_4.     
\end{aligned}
\end{equation}
First we have 
\begin{equation}
\label{eq:5.13}
\begin{aligned}
|I_2| &\le C \| \partial^2_x \gamma \|_{L^2(\mathcal{S}^1)} \| \varphi\|_{L^2(\mathcal{S}^1)}, \\
|I_3| &\le C \| \partial_t \gamma \|_{L^2(\mathcal{S}^1)} \| \varphi \|_{L^2(\mathcal{S}^1)}. 
\end{aligned}
\end{equation}
By integrating by part, we obtain 
\begin{equation*}
\Phi_1(\gamma, \varphi) 
 = \dfrac{1}{\mathcal{L}(\gamma)^2} \Bigl[ - x \int^1_0 \partial^2_x \gamma \cdot \varphi \, d\tilde{x} + \int^x_0 \partial^2_x \gamma \cdot \varphi \, d\tilde{x} 
                                           - (\partial_x \gamma \cdot \varphi)(x) + (\partial_x \gamma \cdot \varphi)(0) \Bigr]. 
\end{equation*}
Thanks to Lemma \ref{theorem:4.6} we have 
\begin{align*}
I_4 &= - \int^1_0 \partial^2_x \gamma \cdot \varphi \, d\tilde{x} \int^1_0 \dfrac{x}{\mathcal{L}(\gamma)} \partial_t \gamma \cdot \partial_x \gamma \, dx 
      + \int^1_0 \dfrac{1}{\mathcal{L}(\gamma)} \partial_t \gamma \cdot \partial_x \gamma \Bigl( \int^x_0 \partial^2_x \gamma \cdot \varphi \, d\tilde{x} \Bigr) dx \\
    & \qquad - \int^1_0 \dfrac{1}{\mathcal{L}(\gamma)} (\partial_t \gamma \cdot \partial_x \gamma)(\partial_x \gamma \cdot \varphi) \, dx,     
\end{align*}
and then 
\begin{equation*}
|I_4| \le (2 \| \partial^2_x \gamma \|_{L^2(\mathcal{S}^1)} \| \partial_t \gamma\|_{L^1(\mathcal{S}^1)} + C \| \partial_t \gamma\|_{L^2(\mathcal{S}^1)}) \| \varphi\|_{L^2(\mathcal{S}^1)}.
\end{equation*}
Since $|\partial_x \gamma| \equiv \mathcal{L}(\gamma)$, we see that 
\begin{align*}
\partial_x(|\partial^2_x \gamma|^{2} \partial_x \gamma) 
&= (\partial^2_x \gamma \cdot \partial^3_x \gamma) \partial_x \gamma + |\partial^2_x \gamma|^2 \partial^2_x \gamma \\
&= (\partial^2_x \gamma \cdot \partial^3_x \gamma) \partial_x \gamma - (\partial_x \gamma \cdot \partial^3_x \gamma) \partial^2_x \gamma, 
\end{align*}
and then 
\begin{equation}
\label{eq:5.14}
\begin{aligned}
|I_1| &\le C \int^1_0 |\partial^2_x \gamma| |\partial^3_x \gamma| |\varphi| \, dx 
       \le C \| \partial^2_x \gamma \|_{L^\infty(\mathcal{S}^1)} \| \partial^3_x \gamma\|_{L^2(\mathcal{S}^1)} \|\varphi\|_{L^2(\mathcal{S}^1)}. 
\end{aligned}
\end{equation}
Combining \eqref{eq:5.12} with \eqref{eq:5.13} and \eqref{eq:5.14}, we observe from \eqref{eq:3.7} that 
\begin{equation*}
\Bigl| \int^1_0 \dfrac{\partial^3_x \gamma}{\mathcal{L}(\gamma)^{3}} \cdot \partial_x \varphi \, dx  \Bigr| 
 \le C \bigl( 1 + \|\partial^2_x \gamma\|_{L^\infty(\mathcal{S}^1)} \| \partial^3_x \gamma\|_{L^2(\mathcal{S}^1)} + \|\partial_t \gamma \|_{L^2(\mathcal{S}^1)} \bigr)\|\varphi\|_{L^2(\mathcal{S}^1)} 
\end{equation*}
for a.e. $t \in (0,T)$. 
This implies 
\begin{equation}
\label{eq:5.15}
\|\partial^4_x \gamma\|_{L^2(\mathcal{S}^1)} 
 \le C \bigl( 1 + \|\partial^2_x \gamma\|_{L^\infty(\mathcal{S}^1)} \| \partial^3_x \gamma\|_{L^2(\mathcal{S}^1)} 
                + \|\partial_t \gamma \|_{L^2(\mathcal{S}^1)} \bigr) 
\end{equation}
for a.e. $t \in (0,T)$. 
Combining \eqref{eq:5.15} with Proposition \ref{theorem:2.1} and Lemma \ref{theorem:4.3}, we obtain 
\begin{align*}
\|\partial^4_x \gamma\|_{L^2(\mathcal{S}^1)} 
& \le C \bigl( 1 + \|\partial_x \gamma\|_{L^2(\mathcal{S}^1)}^{1/4} \| \partial^3_x \gamma\|_{L^2(\mathcal{S}^1)}^{7/4} 
                + \|\partial_t \gamma \|_{L^2(\mathcal{S}^1)} \bigr) \\
& \le C \bigl( 1 + \|\partial^2_x \gamma \|_{L^2(\mathcal{S}^1)}^{7/8} \| \partial^4_x \gamma\|_{L^2(\mathcal{S}^1)}^{7/8} + \|\partial_t \gamma \|_{L^2(\mathcal{S}^1)} \bigr) \\
&\le C \bigl( 1 + \| \partial^4_x \gamma\|_{L^2(\mathcal{S}^1)}^{7/8} + \|\partial_t \gamma \|_{L^2(\mathcal{S}^1)} \bigr)                
\end{align*}
for a.e. $t \in (0,T)$. 
This together with Young's inequality implies that 
\begin{equation}
\label{eq:5.16}
\|\partial^4_x \gamma\|_{L^2(\mathcal{S}^1)} 
 \le C \bigl( 1 + \|\partial_t \gamma \|_{L^2(\mathcal{S}^1)} \bigr) \quad \text{for a.e.} \quad t \in (0,T). 
\end{equation}
Integrating the both side with respect to $t$ on $(0,T)$, we observe from Lemmas~\ref{theorem:3.9} and~\ref{theorem:4.5} that \eqref{eq:5.10} holds. 
Therefore Lemme \ref{theorem:5.2} follows. 
\end{proof}

We are in a position to prove Theorem \ref{theorem:1.3}: 

\begin{proof}[Proof of Theorem \ref{theorem:1.3}]
Let $p=2$. 
Let $\gamma_0 \in \mathcal{AC}$, and fix $T>0$ arbitrarily. 
Then it follows from Theorem~\ref{theorem:1.2} that problem \eqref{eq:P} has a weak solution $\gamma : \mathcal{S}^1 \times [0, T] \to \mathbb{R}^2$. 
Thanks to Lemma \ref{theorem:5.1} we obtain the uniqueness of weak solutions to problem \eqref{eq:P}.  

We prove the energy structure \eqref{eq:1.6}. 
Fix $0 \le \tau_1 \le \tau_2 \le T$ arbitrarily. 
Then, from Theorem~\ref{theorem:1.2} and Lemma \ref{theorem:4.7} we find a weak solution $\tilde{\gamma} : \mathcal{S}^1 \times [\tau_1, \tau_2] \to \mathbb{R}^2$ of~\eqref{eq:P} 
starting from the `initial data' $\gamma(\cdot, \tau_1)$ such that 
\begin{equation}
\label{eq:5.17}
\mathcal{E}_2(\tilde{\gamma}(\tau_2)) - \mathcal{E}_2(\gamma(\tau_1)) \le -\dfrac{1}{2} \int^{\tau_2}_{\tau_1} \!\!\! \int^1_0 \mathcal{L}(\tilde{\gamma}) |\partial_t \tilde{\gamma}|^2 \, dx dt.  
\end{equation}
On the other hand, $\gamma |_{[\tau_1, \tau_2]}$ is also a weak solution of \eqref{eq:P} with `initial data' $\gamma(\cdot, \tau_1)$ in $\mathcal{S}^1 \times [\tau_1,\tau_2]$. 
It follows from Lemma~\ref{theorem:5.2} that $\tilde{\gamma}=\gamma$ in $H^1(\tau_1,\tau_2;L^2(\mathcal{S}^1)) \cup L^\infty(\tau_1,\tau_2; H^2(\mathcal{S}^1))$. 
Recalling that $\tilde{\gamma}(\cdot,\tau_2)=\gamma(\cdot,\tau_2)$, along the same line as in Lemma~\ref{theorem:4.2}, 
we see that $\tilde{\gamma}(\cdot, \tau_2)=\gamma(\cdot,\tau_2)$ in $H^2(\mathcal{S}^1)$. 
Thus \eqref{eq:1.6} follows from~\eqref{eq:5.17}. 

We prove the subconvergence of weak solution to an elastica. 
By Lemma~\ref{theorem:4.3} we see that 
\begin{equation}
\label{eq:5.18}
\| \partial_x \gamma\|_{L^\infty(\mathcal{S}^1)} = \mathcal{L}(\gamma) \le \dfrac{\mathcal{E}_2(\gamma_0)}{\lambda}, \quad 
\|\partial^2_x \gamma\|_{L^2(\mathcal{S}^1)}^2 = 2 \mathcal{L}(\gamma)^3 E_2(\gamma) \le \dfrac{2 \mathcal{E}_2(\gamma_0)^4}{\lambda^3}, 
\end{equation}
for a.e. $t \in (0, \infty)$. 
Let 
\begin{equation*}
p(t) := \int^1_0 \gamma(x,t) \, dx, \quad \tilde{\gamma}(x,t):= \gamma(x,t) -p(t).  
\end{equation*}
By Poincar\`e's inequalty we have 
\begin{equation}
\label{eq:5.19}
\| \tilde{\gamma} \|_{L^2(\mathcal{S}^1)} \le \dfrac{1}{2 \pi} \|\partial_x \gamma\|_{L^2(\mathcal{S}^1)} \le \dfrac{\mathcal{E}_2(\gamma_0)}{2 \pi \lambda}   
\end{equation}
for a.e. $t \in (0, \infty)$. 
It follows from \eqref{eq:4.23}, \eqref{eq:5.18} and Proposition \ref{theorem:2.1} that 
\begin{align*}
\| \partial^3_x \tilde{\gamma}\|_{L^2}=\| \partial^3_x \gamma\|_{L^2} 
 &\le C(1 + \| \partial^2_x \gamma\|_{L^\infty(\mathcal{S}^1)} \|\partial^2_x \gamma\|_{L^2(\mathcal{S}^1)} + \|\partial_t \gamma\|_{L^2(\mathcal{S}^1)}) \\
 &\le C(1 + \|\partial^3_x \gamma\|_{L^2(\mathcal{S}^1)}^{3/4} + \|\partial_t \gamma\|_{L^2(\mathcal{S}^1)}), 
\end{align*}
and then 
\begin{equation}
\label{eq:5.20}
\| \partial^3_x \tilde{\gamma}\|_{L^2} \le C(1 + \|\partial_t \gamma\|_{L^2(\mathcal{S}^1)}) 
\end{equation}
for a.e. $t \in (0, \infty)$. 
By \eqref{eq:1.6} we find a monotone divergent sequence $\{ t_k \}$ such that 
\begin{equation} 
\label{eq:5.21} 
\partial_t \gamma(\cdot, t_k) \to 0 \quad \text{in} \quad L^2(\mathcal{S}^1) \quad \text{as} \quad k \to \infty. 
\end{equation}
Combining \eqref{eq:5.21} with \eqref{eq:5.16}, \eqref{eq:5.18}, \eqref{eq:5.19} and \eqref{eq:5.20}, 
we see that $\{ \tilde{\gamma}(\cdot, t_k) \}_{k=1}^\infty$ is bounded in $H^4(\mathcal{S}^1)$. 
Thus we find a closed curve $\gamma_* \in H^4(\mathcal{S}^1;\mathbb{R}^2)$ such that 
\begin{equation}
\label{eq:5.22}
\tilde{\gamma}(\cdot, t_k) \rightharpoonup \gamma_*(\cdot) \quad \text{weakly in} \quad H^4(\mathcal{S}^1; \mathbb{R}^2)
\end{equation}
up to a subsequence. 
Then it is clear that $\gamma_* \in \mathcal{AC}$. 
Since \eqref{eq:5.11} is equivalent to 
\begin{equation}
\label{eq:5.23}
\begin{aligned}
\int^1_0 \Bigl[ \dfrac{\partial^2_x \tilde{\gamma}}{\mathcal{L}(\tilde{\gamma})^{3}} \cdot \partial^2_x \varphi 
& - \dfrac{3}{2} \dfrac{|\partial^2_x \tilde{\gamma}|^{2} \partial_x \tilde{\gamma}}{\mathcal{L}(\tilde{\gamma})^{5}} \cdot \partial_x \varphi 
  + \dfrac{\lambda}{\mathcal{L}(\tilde{\gamma})} \partial_x \tilde{\gamma} \cdot \partial_x \varphi \\
& + \mathcal{L}(\tilde{\gamma}) \partial_t \gamma \cdot \varphi 
  + \mathcal{L}(\tilde{\gamma}) \partial_t \gamma \cdot \Phi_1(\tilde{\gamma},\varphi) \partial_x \tilde{\gamma} \Bigr] \, dx=0,   
\end{aligned}
\end{equation}
taking a limit in~\eqref{eq:5.23} along the subsequence, we deduce from \eqref{eq:5.21} and \eqref{eq:5.22} that  
\begin{equation}
\label{eq:5.24}
\begin{aligned}
\int^1_0 \Bigl[ \dfrac{\partial^2_x \gamma_*}{\mathcal{L}(\gamma_*)^{3}} \cdot \partial^2_x \varphi 
 - \dfrac{3}{2} \dfrac{|\partial^2_x \gamma_*|^{2} \partial_x \gamma_*}{\mathcal{L}(\gamma_*)^{5}} \cdot \partial_x \varphi 
 + \dfrac{\lambda}{\mathcal{L}(\gamma_*)} \partial_x \gamma_* \cdot \partial_x \varphi \Bigr] \, dx=0   
\end{aligned}
\end{equation}
for all $\varphi \in H^2(\mathcal{S}^1; \mathbb{R}^2)$. 
Since $\gamma_* \in H^4(\mathcal{S}^1; \mathbb{R}^2)$, we observe from \eqref{eq:5.24} that 
\begin{equation*}
-\dfrac{\partial^4_x \gamma_*}{\mathcal{L}(\gamma_*)^{4}}  
 - \dfrac{3}{2} \dfrac{\partial_x(|\partial^2_x \gamma_*|^{2} \partial_x \gamma_*)}{\mathcal{L}(\gamma_*)^{6}} 
 + \dfrac{\lambda}{\mathcal{L}(\gamma_*)^2} \partial^2_x \gamma_*=0     
\end{equation*}
for a.e. $x \in \mathcal{S}^1$. Since the equation is equivalent to 
\begin{equation*}
-\partial^2_s \kappa_* - \dfrac{1}{2} \kappa_*^3 + \lambda \kappa_*=0, 
\end{equation*}
where $\kappa_*$ denotes the curvature of $\gamma_*$, we see that $\gamma_*$ is an elastica. 
Therefore Theorem~\ref{theorem:1.3} follows. 
\end{proof}

\noindent
{\bf Acknowledgements.}
This work was initiated during the first author's visit at University of Wollongong (before the Corona pandemic), partially funded via the EIS Near Miss grant scheme. 
The first author is very grateful to second author for his warm hospitality and the inspiring working atmosphere. 
The first author was supported in part by JSPS KAKENHI Grant Numbers JP19H05599, JP20KK0057 and JP21H00990.



\begin{thebibliography}{99}

\bibitem{AM} E. Acerbi and D. Mucci, {\it Curvature-dependent energies: the elastic case}, 
Nonlinear Anal. {\bf 153} (2017), 7--34.

\bibitem{Adams} R. A. Adams and J. J. F. Fournier, Sobolev spaces, second ed., Pure and Applied Mathematics (Amsterdam) {\bf 140}, Elsevier/Academic Press, Amsterdam, 2003. 

\bibitem{AGS} L. Ambrosio, N. Gigli and G. Savar\`e, Gradient Flows, Birkh\"auser, Basel (2008). 

\bibitem{Badal} R. Badal, {\it Curve-shortening flow of open, elastic curves in $\mathbb{R}^2$ with repelling endpoints: A minimizing movement approach}, 
arXiv: 1902.08079v1, 2019. 

\bibitem{BVH} S. Blatt, N. Vorderobermeier and C. Hopper, 
{\it A minimizing movement scheme for the $p$-elastic energy of curves}, arXiv:2101.10101. 

\bibitem{BGH} G. Buttazzo, M. Giaquinta and S. Hildebrandt, {\it One-dimensional variational problems. An introduction}, 
Oxford Lecture Series in Mathematics and its Applications, {\bf 15} 
The Clarendon Press, Oxford University Press, New York, 1998. 

\bibitem{DD} A. Dall'Acqua and K. Deckelnick, {\it An obstacle problem for elastic graphs}, SIAM J. Math. Anal. {\bf 50} (2018), no. 1, 119--137. 

\bibitem{DDG} A. Dall'Acqua, K. Deckelnick and H.-C. Grunau, {\it Classical solutions to the Dirichlet problem for Willmore surfaces of revolution}, Adv. Calc. Var. {\bf 1} (2008), no. 4, 379--397. 

\bibitem{DLLPS} A. Dall'Acqua, T. Laux, C.C. Lin, P. Pozzi and A. Spener, 
{\it The elastic flow of curves on the sphere}, Geom. Flows {\bf 3} (2018), 1--13.

\bibitem{DLP_2014} A. Dall'Acqua, C.-C. Lin and P. Pozzi, 
{\it Evolution of open elastic curves in $\mathbb{R}^n$ subject to fixed length and natural boundary conditions}, Analysis (Berlin) {\bf 34} (2014), no. 2, 209--222. 

\bibitem{DLP_2017} A. Dall'Acqua, C.-C. Lin and P. Pozzi, 
{\it A gradient flow for open elastic curves with fixed length and clamped ends}, Ann. Sc. Norm. Super. Pisa Cl. Sci. (5) {\bf 17} (2017), no. 3, 1031--1066. 

\bibitem{DP_2014}  A. Dall'Acqua and P. Pozzi, 
{\it A Willmore-Helfrich $L^2$-flow of curves with natural boundary conditions}, Comm. Anal. Geom. {\bf 22} (2014), no. 4, 617--669. 

\bibitem{DPS} A. Dall'Acqua, P. Pozzi and A. Spener, 
{\it The \L{}ojasiewicz-Simon gradient inequality for open elastic curves}, J. Differential Equations {\bf 261} (2016), no. 3, 2168--2209. 

\bibitem{DFLM} G. Dal Maso, I. Fonseca, G. Leoni and M. Morini, 
{\it A higher order model for image restoration: the one-dimensional case}, SIAM J. Math. Anal. {\bf 40}, no. 6 (2009), 2351--2391.

\bibitem{DKS} G. Dziuk, E. Kuwert and R. Sch\"{a}tzle, 
{\it Evolution of elastic curves in $\mathbb{R}^n$: existence and computation} SIAM J. Math. Anal. {\bf 33} (2002), no. 5, 1228--1245. 

\bibitem{FKN} V. Ferone, B. Kawohl and C. Nitsch, 
{\it Generalized elastica problems under area constraint}, Math. Res. Lett. {\bf 25}, (2018), no. 2, 521--533. 

\bibitem{FFLM_2012} I. Fonseca, N. Fusco, G. Leoni and M. Morini, {\it Motion of elastic thin films by anisotropic surface diffusion with curvature regularization}, 
Arch. Ration. Mech. Anal. {\bf 205} (2012), no. 2, 425--466. 

\bibitem{K} N. Koiso, {\it On the motion of a curve towards elastica}, 
In Actes de la Table Ronde de G\'eom\'etrie Diff\'erentielle (Luminy, 1992), vol. 1 of S\'emin. Congr. Soc. Math. France, Paris, 1996, 403--436. 

\bibitem{LS_1984} J. Langer and D. A. Singer, {\it Knotted elastic curves in $\mathbb{R}^3$}, 
J. London Math. Soc. (2) {\bf 30} (1984), no. 3, 512--520.

\bibitem{LS_1985} J. Langer and D. A. Singer, 
{\it Curve straightening and a minimax argument for closed elastic curves}, Topology {\bf 24} (1985), no. 1, 75--88. 

\bibitem{L} C.-C. Lin, {\it $L^2$-flow of elastic curves with clamped boundary conditions}, 
J. Differential Equations {\it 252} (2012), no. 12, 6414--6428. 

\bibitem{LLS} C.-C. Lin, Y.-K. Lue and H. R. Schwetlick, 
{\it The second-order $L^2$-flow of inextensible elastic curves with hinged ends in the plane}, 
J. Elasticity {\bf 119} (2015), no. 1-2, 263--291.

\bibitem{MM} C. Mantegazza and L. Martinazzi, 
{\it A note on quasilinear parabolic equations on manifolds}, 
Ann. Sc. Norm. Super. Pisa Cl. Sci. (5) {\bf 11} (2012), no. 4, 857--874.

\bibitem{MPP} C. Mantegazza, A. Pluda and M. Pozzetta, 
{\it A survey of the elastic flow of curves and networks}, Milan J. Math. (2021), to appear. 

\bibitem{MP} C. Mantegazza and M. Pozzetta, 
{\it The {\L}ojasiewicz--Simon inequality for the elastic flow}, Calc. Var. {\bf 60} 56 (2021). 

\bibitem{NO_2014} M. Novaga and S. Okabe, 
{\it Curve shortening-straightening flow for non-closed planar curves with infinite length}, 
J. Differential Equations {\bf 256} (2014), no. 3, 1093--1132.

\bibitem{NO} M. Novaga and S. Okabe, {\it Regularity of the obstacle problem for the parabolic biharmonic equation}, 
Math. Ann. {\bf 363} (2015), no. 3-4, 1147--1186. 

\bibitem{NO_2017} M. Novaga and S. Okabe, 
{\it Convergence to equilibrium of gradient flows defined on planar curves}, 
J. Reine Angew. Math. {\bf 733} (2017), no. 3, 87--119. 

\bibitem{NP} M. Novaga and P. Pozzi, {\it A second order gradient flow of $p$-elastic planar networks}, 
SIAM J. Math. Anal. {\bf 52} (2020), no. 1, 682--708. 

\bibitem{Oe_2014} D. Oelz, {\it Convergence of the penalty method applied to a constrained curve straightening flow}, 
Commun. Math. Sci. {\bf 12} (2014), no. 4, 601--621. 

\bibitem{Oe_2011} D. B. \"Oelz, {\it On the curve straightening flow of inextensible, open, planar curves}, 
SeMA J., {\bf 54} (2011), 5--24. 

\bibitem{O_2007} S. Okabe, {\it The motion of elastic planar closed curves under the area-preserving condition}, 
Indiana Univ. Math. J. {\bf 56} (2007), no. 4, 1871--1912.

\bibitem{O_2008} S. Okabe, {\it The dynamics of elastic closed curves under uniform high pressure}, 
Calc. Var. {\bf 33} (2008), no. 4, 493--521.

\bibitem{OPW} S. Okabe, P. Pozzi and G. Wheeler, {\it A gradient flow for the $p$-elastic energy defined on closed planar curves}, 
Math. Ann. {\bf 378} (2020), no. 1-2, 777--828. 

\bibitem{OY} S. Okabe and K. Yoshizawa, {\it The obstacle problem for a fourth order semilinear parabolic equation}, 
Nonlinear Anal. {\bf 198} (2020), 111902, 23 pp. 

\bibitem{P} A. Polden, {\it Curves and surfaces of least total curvature and fourth-order flows}, 
PhD Thesis, Universit\"at T\"ubingen (1996). 

\bibitem{SW} N. Shioji and K. Watanabe, {\it Total $p$-powered curvature of closed curves and flat-core closed $p$-curves in $S^2(G)$}, 
Comm. Anal. Geom. {\bf 28} (2020), no. 6, 1451--1487. 

\bibitem{S} A. Spener, {\it Short time existence for the elastic flow of clamped curves}, Math. Nachr. {\bf 290} (2017), no. 13, 2052--2077.

\bibitem{W} K. Watanabe, {\it Planar $p$-p-elastic curves and related generalized complete elliptic integrals}, 
Kodai Math. J. {\bf 37} (2014), no. 2, 453--474. 

\bibitem{Wen_1993} Y. Wen, {\it $L^2$ flow of curve straightening in the plane}. Duke Math. J. {\bf 70} (1993), no. 3, 683--698.

\bibitem{Wen_1995} Y. Wen, {\it Curve straightening flow deforms closed plane curves with nonzero rotation number to circles}, 
J. Differential Equations {\bf 120} (1995), no. 1, 89--107. 

\bibitem{Wheeler_2015} G. Wheeler, {\it Global analysis of the generalised Helfrich flow of closed curves immersed in $\mathbb{R}^n$}, 
Trans. Amer. Math. Soc. {\bf 367} (2015), no. 4, 2263--2300.

\end{thebibliography}
\end{document}